\DeclareFontFamily{OT1}{pzc}{}
\DeclareFontShape{OT1}{pzc}{m}{it}{<-> s * [1.10] pzcmi7t}{}
\DeclareMathAlphabet{\mathpzc}{OT1}{pzc}{m}{it} 
\DeclareMathOperator{\tr}{tr}
\DeclareMathOperator{\Dix}{\mathrm{Tr}_{\mathpzc{W}}}
\DeclareMathOperator{\LimOmega}{\mathrm{Lim}_{\mathpzc{W}}}
\DeclareMathOperator{\Var}{\mathrm{Var}}
\newtheorem{thm}{\bf \textsc{Theorem}}[section]
\newtheorem{cor}[thm]{\bf Corollary}
\newtheorem{lem}[thm]{\bf Lemma}
\theoremstyle{definition}
\theoremstyle{definition}
\newtheorem*{rmk}{\it Remark}
\newtheorem{defn}[thm]{\bf Definition}{\theoremstyle{definition}
\newtheorem*{defnnonum}{\bf Definition}}
\renewcommand{\emptyset}{\varnothing}
\renewcommand{\chi}{\mathbbm{1}}
\renewcommand{\epsilon}{\varepsilon}
\newcommand{\Nint}{\mbox{\sout{$\displaystyle{\int}$}} \;}
\newcommand{\nint}{\mbox{\sout{${\int}$} \;}}
\renewcommand{\leq}{\leqslant}
\renewcommand{\geq}{\geqslant}
\begin{document}
\title{Spectral metric spaces for Gibbs measures}
\author{M.~Kesseb\"ohmer}
\address{Fachbereich Mathematik, Universit\"at Bremen, 28359 Bremen, Germany}
\email{mhk@math.uni-bremen.de}
\author{T.~Samuel}
\address{School of Mathematics and Statistics, University of St Andrews, St Andrews, KY169SS, Scotland}
\email{as57@st-andrews.ac.uk}
\begin{abstract}
We construct  spectral metric spaces for Gibbs measures on a one-sided topologically exact subshift of finite type.  That is, for a given Gibbs measure we construct a spectral triple and show that  Connes' corresponding pseudo-metric is a metric and that its metric topology agrees with the weak-${*}$-topology on the state space over the set of continuous functions defined on the subshift.   Moreover, we show that each Gibbs measure   can be fully recovered from the noncommutative integration theory  and that the noncommutative volume constant of the associated spectral triple is equal to the reciprocal of the measure theoretical entropy of the shift invariant Gibbs measure. 
\end{abstract}

\keywords{
Noncommutative Geometry, Spectral Triple, Entropy, Gibbs Measure, Equilibrium Measure, Subshift of Finite Type, Renewal Theory.}

\maketitle

\section{Introduction}

In this paper we contribute to the on going research on formulating a noncommutative fractal geometry.  The starting point of our investigations is the work of Antonescu-Ivan and Christensen \cite{STCS} where the metric aspects of a spectral triple for an AF (approximately finite) $C^{*}$-algebra are considered.

One aspect of noncommutative geometry, or more precisely, the notion of a spectral triple, is to analyse geometric spaces using operator algebras, particularly $C^{*}$-algebras.  This idea first appeared in the work of Gelfand and Na\u{\i}mark \cite{Gelfand+Neumark}, where it was shown that a $C^{*}$-algebra can be seen as a generalisation of the ring of complex-valued continuous functions on a locally compact metric space.  In the 1980s Connes \cite{NCDGCones, C} formalised the notion of noncommutative geometry and, in doing so, showed that the tools of Riemannian geometry can be extended to certain non-Hausdorff spaces known as ``bad quotients'' and to spaces of a ``fractal'' nature.  In particular, Connes proposed the concept of a spectral triple.

\begin{defnnonum}
A \textit{spectral triple} is a triple $(A,H,D)$ consisting of a $C^{*}$-algebra $A$, which acts faithfully on a separable Hilbert space $H$, and an essentially self-adjoint unbounded operator $D$ defined on $H$ with compact resolvent such that the set
\[
\{ a \in A \, : \, \text{the operator} \; [D, \pi(a)] \; \text{extends to a bounded operator defined on} \; H \}
\]
is dense in $A$.  (Here $\pi: A \to B(H)$ denotes the faithful action of $A$ on $H$.) 
\end{defnnonum}

Connes showed that with such a structure one can obtain a pseudo-metric on the state space $\mathcal{S}(A)$ of $A$, analogous to how the Monge-Kantorovitch metric is defined on the space of probability measures on a compact metric space.  In 1998 Rieffel \cite{Ri2} and Pavlovi\'c \cite{Pav}, independently, established conditions under which Connes' pseudo-metric is a metric and established conditions under which the metric topology of Connes' pseudo-metric is equivalent to the weak-${*}$-topology defined on $\mathcal{S}(A)$ - Theorem \ref{RieffelandPavlovicThmNCG}.  In the situation that Connes' pseudo-metric is a metric we call the spectral triple a \textit{spectral metric space}.  Further, Connes noted that the Dixmier trace provides the proper analogue of integration in the contexts of a spectral triple and  developed a notion of dimension, called metric dimension.  To justify that his approach was the correct approach, in \cite{NCDGCones, C} he proved that for an arbitrary smooth compact spin$^{c}$ Riemannian manifold there exists a spectral triple from which the metrical information, the measure theoretical information, the smooth structure of the manifold and much else can be recovered.  This illustrates that a spectral triple allows one to move beyond the limits of classical Riemannian geometry.  That is to say, not only is one able to recover classical aspects of Riemannian geometry, but through the notion of a spectral triple one is able to extend the tools of Riemannian geometry to situations that present themselves at the boundary of classically defined objects, for instance, objects which ``live'' on the boundary of Teichm\"uller space (such as the noncommutative torus, see also \cite{homologyatinfinity}) or those of a ``fractal'' nature (such as the middle third Cantor set).  Although one of the original motivations for noncommutative geometry was to be able to deal with non-Hausdorff spaces, such as foliated manifolds, which are often best represented by a noncommutative $C^{*}$-algebra (see \cite{C, Mar, Ri3, IntroNC,KSS}), this new theory has scope, even when the $C^{*}$-algebra is commutative.

\subsection{A brief history of a fractal noncommutative geometry}

In Connes' seminal book \cite{C}, the concept of a noncommutative fractal geometry is introduced.  Consequently, a remarkable amount of interest has developed in this subject.  In Chapter $\mathrm{IV}$ of \cite{C}, numerous examples are given to indicate how fractal sets can be represented by spectral triples.  Connes' examples include non-empty compact totally disconnected subsets of $\mathbb{R}$ with no isolated points, Julia sets of endomorphisms of the complex plane and limit sets of Fuchsian groups of the second kind.  Subsequently, in $1997$ Lapidus \cite{Lapidus3} proposed several ways in which the notions of a noncommutative fractal geometry could be extended, after which several important articles on the subject appeared.  For instance, Guido and Isola \cite{GI1} analysed the spectral triple presented by Connes for limit fractals in $\mathbb{R}$ which satisfy a certain separation condition.  (Note that such sets are non-empty compact totally disconnected and have no isolated points.)  There, the authors investigated aspects of Connes' pseudo-metric, the metric dimension and the noncommutative integral of Connes' spectral triple.  In \cite{GI2} this construction and analysis is extended to limit fractals in $\mathbb{R}^{n}$, for $n \in \mathbb{N}$.  In \cite{Self-reference-1} Falconer and Samuel have modified this construction to describe multifractal phenomena.  Further, Antonescu-Ivan and Christensen  \cite{STCS} have provided a construction of a spectral triple for an AF $C^{*}$-algebra with particular focus on aspects of Connes' pseudo-metric.   In \cite{DOST} the authors give several examples of spectral triples which represent fractal sets such as the von Koch curve and the Sierpi\'nski gasket.  There, the authors showed that for such sets the Hausdorff dimension can be recovered and that Connes' pseudo-metric induces a metric equivalent to the metric induced by the ambient space on the given set.  More recently, in \cite{Bellisard} the authors adapt Connes' spectral triple to represent the code space $\{ 0, 1 \}^{\mathbb{N}}$ equipped with an ultra-metric $d$.   There it is shown that an adaptation of Connes' pseudo-metric gives rise to a metric equal to $d$.  Further, they proved that the box-counting dimension can be recovered and that a noncommutative integration theory gives rise to an integral with respect to the normalised $\delta$-dimensional Hausdorff measure on the metric space $(\{ 0, 1 \}^{\mathbb{N}}, d)$.  (Here, $\delta$ denotes the Hausdorff dimension of $(\{ 0, 1\}^{\mathbb{N}}, d)$.)

\subsection{Main results}

We generalise the notion of a Haar basis which is well established for the the middle third Cantor set to the setting of a one-sided topologically exact subshift of finite type $(\Sigma_{\mathpzc{A}}, \sigma)$ equipped with a Gibbs measure $\mu_{\phi}$ for a H\"older continuous potential function $\phi \in C(\Sigma_{\mathpzc{A}}; \mathbb{R})$ -- Theorem \ref{Lemma-Generlised-Haar-Basis}.  Using such a construction, in Theorem \ref{Subshiftoffinitetypethmspectraltripleandother} we refine the methods of Antonescu-Ivan and Christensen \cite{STCS} and derive a spectral triple for each such measure space.  From such a representation we shown that a variety of geometric and measure theoretic information can be recovered.  In particular, Theorem \ref{thm-connes-metric-adapt-dirac-AC} shows that Connes' pseudo-metric, given by our spectral triple, is a metric on the state space $\mathcal{S}(C(\Sigma_{\mathpzc{A}}; \mathbb{C}))$ of the $C^{*}$-algebra $C(\Sigma_{\mathpzc{A}}; \mathbb{C})$, and that the topology induced by this metric is equivalent to the weak-${*}$-topology defined on $\mathcal{S}(C(\Sigma_{\mathpzc{A}}; \mathbb{C}))$.  Hence, the spectral triple we provide is a spectral metric space.  In Theorem \ref{Subshiftoffinitetypethmspectraltripleandotherintundmetricdim} it is shown that the metric dimension of our spectral triple is equal to one and that the noncommutative integration theory of our spectral triple is capable of recovering the measure $\nu_{\phi}$.  Moreover, it is established that the noncommutative volume constant is equal to the reciprocal of the measure theoretical entropy of $\nu_{\phi}$ with respect to the left shift $\sigma$.

\subsection{Organization of paper}

Our work is organised as follows.  In Section \ref{SpectralTripesIntro} we define the notion of an unbounded Fredholm module and the notion of a spectral triples.  We then introduce Connes' pseudo metric, the metric dimension and Connes' noncommutative integration theory.  In Section \ref{DynamicalSysIntro}, a review the theory of the theromodynamic formalism on shift spaces as developed by Bowen and Ruelle \cite{Bowen2, Bowen1, Ruelle1} is presented.  It is here that we include the description, of the generalised notion, of a Haar basis for a one-sided topologically exact subshift of finite type equipped with a Gibbs measure.  We also introduce the relevant notions of renewal theory and deduce two useful counting results for topologically exact subshifts of finite type - Corollary \ref{Cor1} and Corollary \ref{Cor2}.  In Section \ref{MainSection}, we construct a spectral triple for a topologically exact subshift of finite type equipped with a Gibbs measure for a H\"older continuous potential function and present the geometric and measure theoretical results (Theorems \ref{Subshiftoffinitetypethmspectraltripleandotherintundmetricdim} and \ref{thm-connes-metric-adapt-dirac-AC}) as described above.

\section{Unbounded Fredholm modules and spectral triples}\label{SpectralTripesIntro}

In \cite{NCDGCones, CO2, C} Connes extends the notion of a compact metric space to the setting of $C^{*}$-algebras and unbounded operators on Hilbert spaces.  This is done in terms of unbounded Fredholm modules and spectral triples.

\begin{defn}\label{STdefn}
Let $A$ denote a unital $C^{*}$-algebra.  An \textit{unbounded Fredholm module} $(H, D)$ over $A$, consists of a separable Hilbert $H$, a  faithful $*$-representation $\pi: A \to B(H)$ and an operator $D$, called a Dirac operator, such that the following hold.  The operator $D$ is an essentially self-adjoint, unbounded linear operator with compact resolvent, such that the set
\[
\{ a \in A \; : \; [D, \pi(a)] \; \text{is densely defined and extends to a bounded operator on} \; H \}
\]
is norm dense in $A$.  A triple $(A, H, D)$ with the above description is called a \textit{spectral triple}.
\end{defn}

By Gelfand and Na\u{\i}mark's classification theorem for $C^{*}$-algebras \cite{Gelfand+Neumark} it is necessarily the case, that given a $C^{*}$-algebra $A$ there exists a Hilbert space $H$ and a faithful $*$-representation $\pi: A \to B(H)$.  Since such a $*$-representation is not necessarily unique, it is important to specify the $*$-representation.  For this reason we occasionally use the notation $(A, H, \pi, D)$ for a spectral triple.  Also, observe that in Definition \ref{STdefn} it is not necessary for the $C^{*}$-algebra to be unital.

The compact resolvent property of $D$ in Definition \ref{STdefn} can be regarded as a generalisation of the ellipticity property of the standard Dirac operator defined on a compact smooth Riemannian manifold (see \cite{Mar}).  The condition that the closure of $[D, \pi(a)]$ is densely defined and extends to a bounded operator is analogous to a Lipschitz condition (see \cite{Bellisard2, Mar}).

Often Definition \ref{STdefn} is strengthen to include a summability statement.  Let $p > 0$ be given.  A spectral triple $(A, H, D)$ is said to be \textit{$p$-summable} if and only if
\begin{equation}\label{hash1}
\tr\left(\left(1 + D^{2}\right)^{-\frac{p}{2}}\right) < \infty;
\end{equation}
it is called \textit{$(p, +)$-summable}, for $p > 1$,  if and only if
\begin{equation}\label{hash2}
\limsup_{N \to \infty} \frac{1}{N^{1 - 1/p}} \sum_{k = 1}^{N} \sigma_{k}\left( (1 + D^{2})^{1/2} \right) < \infty;
\end{equation}
and it is call \textit{$(1, +)$-summable} if and only if
\begin{equation}\label{hash3}
\limsup_{N \to \infty} \frac{1}{\ln(N)} \sum_{k = 1}^{N} \sigma_{k}\left( (1 + D^{2})^{1/2} \right) < \infty.
\end{equation}
Here, for a Hilbert space $H$, an operator $T \in B(H)$ and $k \in \mathbb{N}$, we let $\sigma_{k}(T)$ denote the $k$-th largest singular value, including multiplicities, of $T$.  A spectral triple is called \textit{finitely summable} if $(\ref{hash1})$ holds for some real $p > 0$.  This is equivalent to there existing a $p > 0$ such that 
\begin{equation}\label{hash2}
\limsup_{N \to \infty} \frac{1}{\ln(N)} \sum_{k = 1}^{N} \sigma_{k}\left(\left(1 + D^{2}\right)^{-p/2}\right) < \infty.
\end{equation}
The proof of this equivalence is given by the following analytic result on sequences.

\begin{lem}\label{lemma-pplus=pimpliestheta}
Let $\{ x_{k} \}_{k \in \mathbb{N}}$ denote a monotonically increasing unbounded sequence of positive real numbers and set
\begin{eqnarray*}
d_{1} &\mathrel{:=}& \sup \left\{ \alpha \geq 0 \, : \, \limsup_{N \to \infty} \frac{\sum_{k = 1}^{N} {x_{k}}^{-\alpha}}{\ln(N)} = \infty \right\},\\
d_{2} &\mathrel{:=}& \inf \left\{ \alpha \geq 0 \, : \, \limsup_{N \to \infty} \frac{\sum_{k = 1}^{N} {x_{k}}^{-\alpha}}{\ln(N)} = 0 \right\},\\
d_{3} &\mathrel{:=}& \inf \left\{ \alpha \geq 0 \, : \, \sum_{k = 1}^{\infty} {x_{k}}^{-\alpha} < \infty \right\} \quad = \quad \sup \left\{ \alpha \geq 0 \, : \, \sum_{k = 1}^{\infty} {x_{k}}^{-\alpha} = \infty \right\},\\
d_{4} &\mathrel{:=}& \left( \liminf_{k \to \infty} \frac{\ln(x_{k})}{\ln(k)} \right)^{-1} \quad \text{(where we use the convention that $1/\infty = 0$)}.
\end{eqnarray*}
If any of the above are positive and finite, then they are all equal.
\end{lem}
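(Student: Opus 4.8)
The plan is to reduce all four quantities to the single number $\beta := \limsup_{k \to \infty} \ln(k)/\ln(x_k)$. The identity $d_4 = \beta$ is immediate, since $d_4 = (\liminf_{k} \ln(x_k)/\ln(k))^{-1} = \limsup_{k} \ln(k)/\ln(x_k) = \beta$ by the elementary fact $\limsup_k (1/a_k) = 1/\liminf_k (a_k)$ applied to the positive sequence $a_k = \ln(x_k)/\ln(k)$ (using the stated convention $1/\infty = 0$, and noting that the values at small $k$ are irrelevant). It then suffices to show $d_1 = d_2 = d_3 = \beta$, and I would do this by analysing the two one-sided regimes $\alpha > \beta$ and $\alpha < \beta$ separately, extracting from each the relevant inequalities for $d_1$, $d_2$, $d_3$. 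Throughout, write $S_N(\alpha) := \sum_{k=1}^{N} x_k^{-\alpha}$ and $f(\alpha) := \limsup_{N\to\infty} S_N(\alpha)/\ln(N)$, so that $d_1 = \sup\{\alpha \geq 0 : f(\alpha) = \infty\}$ and $d_2 = \inf\{\alpha \geq 0 : f(\alpha) = 0\}$.

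For the convergence regime, fix $\alpha > \beta$ and choose $\gamma$ with $\beta < \gamma < \alpha$. Since $\limsup_k \ln(k)/\ln(x_k) = \beta < \gamma$, for all large $k$ we have $\ln(k) < \gamma \ln(x_k)$, hence $x_k^{-\alpha} < k^{-\alpha/\gamma}$ with $\alpha/\gamma > 1$; comparison with the convergent $p$-series gives $\sum_k x_k^{-\alpha} < \infty$. Consequently $S_N(\alpha)$ converges to a finite limit, so $f(\alpha) = 0$. This yields $d_3 \leq \beta$ (the series converges for every $\alpha > \beta$), $d_2 \leq \beta$ (each such $\alpha$ lies in $\{f = 0\}$) and $d_1 \leq \beta$ (no such $\alpha$ lies in $\{f = \infty\}$).

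The divergence regime is the heart of the argument, and the main obstacle is that a term-by-term comparison is no longer available: fixing $\alpha < \beta$ and $\gamma$ with $\alpha < \gamma < \beta$, the inequality $x_k < k^{1/\gamma}$ only holds along the subsequence $(k_j)$ realising the $\limsup$, which by itself does not force divergence. The key idea is to exploit the monotonicity of $(x_k)$: for $k \leq k_j$ we have $x_k \leq x_{k_j} < k_j^{1/\gamma}$, whence
\[
S_{k_j}(\alpha) \;=\; \sum_{k=1}^{k_j} x_k^{-\alpha} \;\geq\; k_j \cdot x_{k_j}^{-\alpha} \;\geq\; k_j^{\,1 - \alpha/\gamma}.
\]
Since $\alpha < \gamma$ the exponent $1 - \alpha/\gamma$ is positive, so $S_{k_j}(\alpha)/\ln(k_j) \to \infty$ and therefore $f(\alpha) = \infty$. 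This single computation delivers all three remaining inequalities: $d_1 \geq \beta$, $d_2 \geq \beta$ (no $\alpha < \beta$ lies in $\{f = 0\}$), and, since $f(\alpha) = \infty$ forces the nondecreasing partial sums $S_N(\alpha)$ to be unbounded, $\sum_k x_k^{-\alpha} = \infty$ and hence $d_3 \geq \beta$.

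Combining the two regimes gives $d_1 = d_2 = d_3 = \beta = d_4$. The only care needed is for the degenerate endpoints $\beta \in \{0, \infty\}$, where one of the one-sided ranges of $\alpha$ is empty; but in those cases the argument forces all four quantities to equal $0$ or $\infty$ simultaneously, so they are excluded precisely by the hypothesis that one of them is positive and finite, under which $0 < \beta < \infty$ and both regimes above are non-vacuous.
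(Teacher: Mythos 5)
Your proof is correct. There is nothing in the paper to compare it against: the authors do not prove this lemma at all, their ``proof'' being the single sentence that it is a simple analytic exercise on sequences, so your write-up supplies exactly what the paper omits. The reduction of all four quantities to $\beta = \limsup_{k}\ln(k)/\ln(x_{k})$ is the natural one; the convergence regime ($\alpha>\beta$) is a routine comparison with a convergent $p$-series; and you correctly isolate and resolve the only genuinely delicate point, namely that in the divergence regime the bound $x_{k_{j}}<k_{j}^{1/\gamma}$ holds only along a subsequence and must be propagated to all $k\leq k_{j}$ via the monotonicity of $(x_{k})$ to obtain $S_{k_{j}}(\alpha)\geq k_{j}\,x_{k_{j}}^{-\alpha}\geq k_{j}^{1-\alpha/\gamma}$. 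This is precisely where the monotonicity hypothesis is used, and it is essential (for non-monotone sequences one can arrange $d_{3}\neq d_{4}$). Your treatment of the degenerate endpoints $\beta\in\{0,\infty\}$, which is what the ``positive and finite'' clause in the statement is guarding against, is also correct.
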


\begin{proof}
The proof of this lemma is a simple analytic exercise on sequences.
\end{proof}

\subsection{Connes' Pseudo Metric}

In this section we describe Connes pseudo-metric on the state space of a $C^{*}$-algebra $A$ which is induced by an unbounded Fredholm module $(H, D)$ over $A$.

A compact metric space $X$ naturally embeds into the state space $\mathcal{S}(C(X); \mathbb{C})$ of $C(X; \mathbb{C})$ and by the Riesz Representation Theorem we have that $\mathcal{S}(C(X); \mathbb{C})$ coincides with $M(X)$, the space of Borel probability measures on $X$.  By the Banach-Alaoglu Theorem this space is weak-${*}$-compact.  Moreover, letting $\text{Lip}(X; \mathbb{C})$ denote the subset of $C(X; \mathbb{C})$ consisting of Lipschitz continuous functions and letting $\text{Lip}(f)$ denote the Lipschitz constant of $f \in \text{Lip}(X; \mathbb{C})$, the \label{Monge-Kantorovitch-metric}\textit{Monge-Kantorovitch metric} given, for all $\mu, \nu \in M(X)$, by
\[
d_{MK}(\mu, \nu) \mathrel{:=} \sup \bigg\{ \int_{X} f \, d\mu - \int_{X} f \, d\nu \; : \; f \in \text{Lip}(X; \mathbb{C}) \; \text{with} \; \text{Lip}(f) \leq 1 \bigg\},
\]
defines a metric on $M(X)$ whose topology coincides with the weak-${*}$-topology.

Analogously, given a spectral triple $(A, H, \pi, D)$, Connes defines a pseudo-metric $d: \mathcal{S}(A) \times \mathcal{S}(A) \to \mathbb{R}$ given, for each $p, q \in \mathcal{S}(A)$, by 
\begin{align*}
d(p, q) \; \mathcal{:=} \; \sup \{ \lvert p(a) - q(a) \rvert \; : \;& a \in A \; \text{and} \; [D, \pi(a)] \; \text{is densely defined and}\\
&\text{extends to a bounded operator with norm} \; \leq \; 1 \}.
\end{align*}
The term pseudo-metric is used since $d(p, q)$ is not necessarily finite for all $p, q \in \mathcal{S}(A)$.  Conditions under which Connes' pseudo-metric is a metric are given by Rieffel and Pavlovi{\'c} and are re-stated in the following theorem.

\begin{thm}\label{RieffelandPavlovicThmNCG}\textnormal{(Rieffel \cite{Ri2} and Pavlovi{\'c} \cite{Pav})}
For a spectral triple $(A, H, \pi, D)$ the following hold.
\begin{enumerate}
\item Connes pseudo-metric is a metric if and only if the set 
\begin{align*}
\mathcal{A}_{D} \mathrel{:=} \{ a \in A \; : \; &[D, \pi(a)] \; \text{is densely defined and extends to a}\\
&\text{bounded operator defined on $H$ with norm} \; \leq \; 1 \}.
\end{align*}
has a bounded image in the quotient space $A / \{ z \mathbb{I} \, : \, z \in \mathbb{C} \}$, where $\mathbb{I}$ denotes the unit of $A$.
\item The topology induced by Connes pseudo-metric $d$ coincides with the weak-${*}$-topology on $\mathcal{S}(A)$ if and only if the set $A_{D}$ has a totally bounded image in the quotient space $A / \{ z \mathbb{I} \, : \, z \in \mathbb{C} \}$.
\end{enumerate}
\end{thm}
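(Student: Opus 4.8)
The plan is to phrase everything in terms of the \emph{Lipschitz seminorm} $L$ defined on the dense subalgebra of $A$ on which $[D,\pi(a)]$ extends to a bounded operator, by $L(a) \mathrel{:=} \lVert [D,\pi(a)] \rVert$, so that $\mathcal{A}_{D} = \{ a : L(a) \leq 1 \}$ and Connes' pseudo-metric is the dual expression $d(p,q) = \sup\{ \lvert p(a) - q(a) \rvert : L(a) \leq 1 \}$. Two preliminary observations drive the argument. First, since $D$ is self-adjoint one has $[D,\pi(a)]^{*} = -[D,\pi(a^{*})]$, so $L(a^{*}) = L(a)$; consequently $L$ and $d$ are insensitive to passing between $a$ and its self-adjoint real and imaginary parts, and the whole analysis reduces to self-adjoint elements. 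Second, because states are unital, $d(p,q)$ depends on $a$ only through its class in $A/\{z\mathbb{I} : z \in \mathbb{C}\}$, which is why this quotient is the correct ambient space.

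For part (1), the key is the elementary duality identity that for self-adjoint $a$,
\[
\sup_{p,q \in \mathcal{S}(A)} \lvert p(a) - q(a) \rvert \;=\; \max \sigma(a) - \min \sigma(a) \;=\; 2 \inf_{t \in \mathbb{R}} \lVert a - t\mathbb{I} \rVert,
\]
where the left equality follows from the spectral theorem and Hahn--Banach (states realise every value in $[\min\sigma(a), \max\sigma(a)]$) and the right equality is a direct computation, the final quantity being twice the quotient norm of $a$. Interchanging the two suprema yields $\operatorname{diam}_{d}(\mathcal{S}(A)) = \sup_{p,q} d(p,q) = 2\sup\{ \lVert a \rVert_{A/\mathbb{C}\mathbb{I}} : a \in \mathcal{A}_{D}\}$ after reducing to self-adjoint representatives. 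Hence $d$ is finite-valued exactly when $\mathcal{A}_{D}$ has bounded image. Since $d$ is automatically symmetric and satisfies the triangle inequality, and separates states because the Lipschitz algebra is norm dense in $A$ (if $p \neq q$, choose $b$ in that dense set with $p(b) \neq q(b)$ and rescale by $L(b)$, which must be positive in the bounded-image case), finiteness is the only remaining obstruction to $d$ being a genuine metric, and part (1) follows.

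For part (2) I would first record the direction holding unconditionally: the $d$-topology is always finer than the weak-${*}$-topology, since $d(p_{n},p) \to 0$ forces $\lvert p_{n}(a) - p(a) \rvert \leq L(a)\, d(p_{n},p) \to 0$ for every Lipschitz $a$, and density of such $a$ together with uniform boundedness of states upgrades this to weak-${*}$-convergence on all of $A$. By Banach--Alaoglu $\mathcal{S}(A)$ is weak-${*}$-compact, so the identity map $(\mathcal{S}(A),d) \to (\mathcal{S}(A), w^{*})$ is a continuous bijection onto a compact Hausdorff space; thus the topologies coincide if and only if $(\mathcal{S}(A),d)$ is compact. As a metric space this means totally bounded and complete, and completeness is automatic because $d$ is lower semicontinuous for the weak-${*}$-topology on the weak-${*}$-compact set $\mathcal{S}(A)$; the criterion therefore reduces to total boundedness of $(\mathcal{S}(A),d)$. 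The remaining, genuinely analytic step is to identify this with total boundedness of the image of $\mathcal{A}_{D}$ in $A/\mathbb{C}\mathbb{I}$ via an Arzel\`a--Ascoli-type duality: a bounded set $E$ in a Banach space $B$ is totally bounded if and only if, on weak-${*}$-compact subsets of $B'$, the topology of uniform convergence on $E$ agrees with the weak-${*}$-topology. Applying this with $B = A/\mathbb{C}\mathbb{I}$, with $E$ the image of $\mathcal{A}_{D}$, and with $\mathcal{S}(A)$ viewed inside $(A/\mathbb{C}\mathbb{I})'$ (legitimate since $d$ only sees differences $p - q$ annihilating scalars) yields the stated equivalence.

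The hard part will be this last duality lemma for total boundedness; the diameter computation underlying part (1) is comparatively soft. The delicate points in the Arzel\`a--Ascoli argument are passing correctly between $A$ and the quotient $A/\mathbb{C}\mathbb{I}$, handling the reduction from complex elements to their self-adjoint parts uniformly so that total boundedness in $A/\mathbb{C}\mathbb{I}$ and in $A_{sa}/\mathbb{R}\mathbb{I}$ are equivalent, and verifying the equicontinuity hypotheses that make the uniform-convergence topology comparable to the weak-${*}$-topology on the compact set $\mathcal{S}(A)$.
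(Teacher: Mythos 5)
The paper offers no proof of this theorem: it is imported verbatim from Rieffel and Pavlovi\'c, so there is no in-house argument to compare yours against, and your outline is in any case a reconstruction of the standard route from those references (reduction to self-adjoint elements, the spectral and quotient-norm identity for the diameter, and the compactness reduction for part (2)). There is, however, a genuine gap in part (1). What you actually establish is that the image of $\mathcal{A}_{D}$ is bounded if and only if the \emph{diameter} $\sup_{p,q}d(p,q)$ is finite. The statement ``$d$ is a metric'' only requires $d(p,q)<\infty$ for each pair, and an infinite supremum does not by itself produce a single pair at infinite distance: $d$ is merely a supremum of weak-$*$ continuous functions, hence weak-$*$ lower semicontinuous, and a finite-valued lower semicontinuous function on a compact set can be unbounded. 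The missing implication (pointwise finiteness implies bounded image) needs a separate argument: by the Jordan decomposition, every self-adjoint functional $f$ on $A$ with $f(\mathbb{I})=0$ is a non-negative multiple of a difference of states, so pointwise finiteness of $d$ makes the convex balanced image of $\mathcal{A}_{D}$ weakly bounded in $A/\{z\mathbb{I}:z\in\mathbb{C}\}$ and therefore norm bounded by the uniform boundedness principle. Without this step (or an explicit convention that ``metric'' includes finite diameter), one direction of (1) is unproved.

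In part (2) your reduction is correct and essentially complete: the $d$-topology is always finer, the identity onto the weak-$*$ compact state space is a continuous bijection, and completeness of $(\mathcal{S}(A),d)$ does follow from lower semicontinuity, so everything hinges on total boundedness of $(\mathcal{S}(A),d)$. But the equivalence of that with total boundedness of the image of $\mathcal{A}_{D}$ --- which you yourself flag as ``the hard part'' --- is only named, not proved, and the duality lemma as you state it is not an if-and-only-if without the additional hypothesis that $E$ spans a dense subspace. The standard way to close it is more concrete: for sufficiency, cover the totally bounded image by finitely many $\epsilon$-balls and use $\lVert p-q\rVert\leq 2$ in the usual $\epsilon/3$ argument to upgrade weak-$*$ convergence of states to $d$-convergence; for necessity, observe that coincidence of the topologies makes $(\mathcal{S}(A),d)$ a compact metric space, that the functions $\widehat{a}(p)\mathrel{:=}p(a)$ for $a\in\mathcal{A}_{D}$ form a $1$-Lipschitz family on it which is uniformly bounded modulo constants by part (1), and apply Arzel\`a--Ascoli, transferring the conclusion back to the quotient norm via $\inf_{c\in\mathbb{C}}\sup_{p\in\mathcal{S}(A)}\lvert p(a)-c\rvert\asymp\lVert a\rVert_{A/\{z\mathbb{I}:z\in\mathbb{C}\}}$. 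As it stands, your proposal is a faithful road map of the Rieffel--Pavlovi\'c proof with the two decisive analytic steps left open.
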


\begin{defn}
A spectral triple $(A, H, D)$ is called a \textit{spectral metric space} if Connes' pseudo-metric is a metric.
\end{defn}

\subsection{Infinitesimals, Measurability and Dimension}\label{SDVF}

Recall that the Hausdorff dimension of a subset $E$ of $\mathbb{R}^{n}$ is given by
\[ 
\inf \{ s > 0 \, : \, \mathcal{H}^{s}(E) = 0 \},
\]
where $\mathcal{H}^{s}$ denotes the $s$-dimensional Hausdorff measure (see \cite{F2} for more details).  The Hausdorff dimension defines a geometric characteristic of the set $E$, and as we will see, is also encoded in the Dirac operator of a spectral triple.  

In this section, we introduce the expectation of a compact operator which arises within the theory of operator algebras and then present the definition of the metric dimension of a spectral triple introduced by Connes \cite{C}.  Having developed these notions, we are then able to define the noncommutative integral which is defined only for a finitely summable spectral triple.

In the theory of operator algebras, and hence, noncommutative geometry, the notion of an expectation/integral of a positive compact operator $T \in K(H)$, defined on a complex separable Hilbert space $H$, is given by the coefficient of logarithmic divergence of the singular values of $T$.  In particular, the ideal of compact operators $K(H)$ provides the ``infinitesimal'' of noncommutative geometry.  Heuristically, in setting of ``standard'' geometry, an infinitesimal is an ``object'' smaller than any feasible measurement and not zero in ``size'', but so small that it cannot be distinguished from zero by any available means.  As a matter of interest, we remark that the founders of calculus, Euler, Leibniz, and Newton initially formulated the theory of calculus using infinitesimals.  However, the notion and definition was foreshadowed in Archimedes' script \textit{The Method of Mechanical Theorems}.

Early attempts to define an expectation within the theory of operator algebras (see \cite{segal}) used ordinary traces of Hilbert space operators, where trace-class operators were thought to be the analogy of integrable functions.  However, it soon became apparent that this is not sufficient.  In 1966 Dixmier \cite{Dixmier} found other tracial states that are more suitable.  He noted that to appropriately define an expectation within the theory of operator algebras, one must suppress infinitesimals of order higher than one.  In particular, one wants to find the coefficient of the divergence rate of the singular values of an infinitesimal operator of order one.

\begin{defn}
Let $H$ denote a complex separable Hilbert space and let $K(H)$ denote the ideal of compact operators in $B(H)$.  Then $T \in K(H)$ is an \textit{infinitesimal of order $s > 0$}, if there exist positive constants $c_{1}, c_{2}$ such that, for each $k \in \mathbb{N}$, we have that
\[
c_{1} k^{-s} \; \leq \; \sigma_{k}(T) \; \leq \; c_{2} k^{-s}.
\]
\end{defn}

\begin{defn}
The \textit{Dixmier ideal} of a separable Hilbert space $H$ is denoted by $\mathcal{L}^{1, +}(H)$ and is defined by
\[
\mathcal{L}^{1, +}(H) \, \mathrel{:=} \, \left\{ T \in K(H) \, : \, \limsup_{N \to \infty} \, \frac{\sum_{k = 1}^{N} \sigma_{k}(T)}{\ln(N)} \, < \, \infty \right\}.
\]
For a state $\mathpzc{W}$ on $l^{\infty}(\mathbb{R}_{+}^{*})$ satisfying the conditions of \cite[Theorem 1.5]{Rennie2}, we define the $\mathpzc{W}$-\textit{Dixmier trace} of a positive linear operator $T \in \mathcal{L}^{1, +}(H)$ by
\begin{equation}\label{measurable-operator-Dix-mier}
\Dix (T) \, \mathrel{:=} \LimOmega \left( \frac{\sum_{k = 1}^{N} \sigma_{k}(T)}{\ln (N)} \right)_{N \in \mathbb{N}},
\end{equation}
whereby, following convention, for $(x_{1}, x_{2}, \dots) \in l^{\infty}(\mathbb{R}_{+}^{*})$, we set
\[
\LimOmega (x_{1}, x_{2}, \dots) \; \mathrel{:=} \; \mathpzc{W}(x_{1}, x_{2}, \dots).
\]
For a general operator in $\mathcal{L}^{1, +}(H)$ the $\mathpzc{W}$-Dixmier trace is defined to be the natural complex linear extension of $\Dix$.  (Here $\mathbb{R}_{+}^{\infty}$ denotes the group of positive
real numbers under multiplication.)
\end{defn}

\begin{defn}
Let $H$ denote a complex Hilbert space and let $I$ denote an ideal of $B(H)$.  A \textit{singular trace on $I$} is a linear functional $\mathpzc{T}$ with domain $I$ such that the following hold.
\begin{enumerate}
\item $\mathpzc{T}$ vanishes on operators with finite dimensional range.
\item If $T_{1}, T_{2} \in I$ are such that $\lim_{k \to \infty} \sigma_{k}(T_{1}) / \sigma_{k}(T_{2}) = 1$, then $\mathpzc{T}(T_{1}) = \mathpzc{T}(T_{2})$.
\item If $T_{1}, T_{2} \in I$ have the property that $\sigma_{k}(T_{1}) \leq \sigma_{k}(T_{2})$ for all but a finite number of $k \in \mathbb{N}$, then $\mathpzc{T}(T_{1}) \leq \mathpzc{T}(T_{2})$.
\item  For all $T_{1}, T_{2} \in I$, we have that $\mathpzc{T}(T_{1}T_{2}) = \mathpzc{T}(T_{2}T_{1})$.
\end{enumerate}
\end{defn}

\begin{thm}\label{DixmierTraceIdealThmIntextVersion}\textnormal{(Carey-Philips-Sukochev \cite{Rennie2})}
Let $H$ denote a complex separable Hilbert space and let $\mathpzc{W}$ denote a state on $l^{\infty}(\mathbb{R}_{+}^{*})$ satisfying the conditions of \cite[Theorem 1.5]{Rennie2}. Then the Dixmier ideal $\mathcal{L}^{1, +}(H)$ is an ideal of $B(H)$ and the functional $\Dix$ is a singular trace.
\end{thm}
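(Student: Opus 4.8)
The plan is to verify the two assertions separately: first that $\mathcal{L}^{1,+}(H)$ is a two-sided ideal of $B(H)$, and then that the complex-linear functional $\Dix$ satisfies each of the four defining properties of a singular trace. Throughout I would rely on three elementary facts about singular values, which I take as known: the domination $\sigma_{k}(BT) \leq \lVert B \rVert \, \sigma_{k}(T)$ and $\sigma_{k}(TB) \leq \lVert B \rVert \, \sigma_{k}(T)$ valid for every $B \in B(H)$; the Ky Fan subadditivity $\sum_{k=1}^{N} \sigma_{k}(S + T) \leq \sum_{k=1}^{N} \sigma_{k}(S) + \sum_{k=1}^{N} \sigma_{k}(T)$; and, for positive operators $S, T$, the complementary lower bound $\sum_{k=1}^{N} \sigma_{k}(S) + \sum_{k=1}^{N} \sigma_{k}(T) \leq \sum_{k=1}^{2N} \sigma_{k}(S + T)$, which one obtains by comparing the rank-$N$ spectral projections of $S$ and $T$ with their rank-$(\leq 2N)$ join.

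For the ideal property, closure under scalar multiplication is immediate from $\sigma_{k}(\lambda T) = \lvert \lambda \rvert \, \sigma_{k}(T)$, and closure under addition follows because Ky Fan subadditivity preserves the logarithmic growth bound that defines $\mathcal{L}^{1,+}(H)$. The two-sided ideal property then follows from the domination inequalities, since $\sum_{k=1}^{N} \sigma_{k}(BT) \leq \lVert B \rVert \sum_{k=1}^{N} \sigma_{k}(T)$, so that membership in $\mathcal{L}^{1,+}(H)$ is inherited by $BT$ and, symmetrically, by $TB$.

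Turning to $\Dix$, well-definedness is clear: for $T \in \mathcal{L}^{1,+}(H)$ the sequence $\big( \sum_{k=1}^{N} \sigma_{k}(T) / \ln(N) \big)_{N}$ is bounded by definition, hence lies in $l^{\infty}(\mathbb{R}_{+}^{*})$ where $\mathpzc{W}$ is defined. The heart of the argument is additivity of $\Dix$ on positive operators. Dividing the two-sided sandwich $\sum_{k=1}^{N} \sigma_{k}(S+T) \leq \sum_{k=1}^{N} \sigma_{k}(S) + \sum_{k=1}^{N} \sigma_{k}(T) \leq \sum_{k=1}^{2N} \sigma_{k}(S+T)$ by $\ln(N)$, the left inequality yields subadditivity after applying $\LimOmega$, while the right inequality yields superadditivity once one absorbs the factor $\ln(2N)/\ln(N) \to 1$ and invokes the dilation-invariance of $\mathpzc{W}$ guaranteed by the conditions of \cite[Theorem 1.5]{Rennie2}. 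I expect this to be the main obstacle, since it is exactly here that the special structure of $\mathpzc{W}$, as opposed to that of an arbitrary generalised limit, is indispensable. Writing a general self-adjoint operator as a difference of its positive parts and a general operator as $\mathrm{Re}(T) + i\,\mathrm{Im}(T)$, additivity on positives upgrades to linearity of the extension on all of $\mathcal{L}^{1,+}(H)$.

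It remains to check the four axioms. Vanishing on finite-rank operators is immediate, for if $T$ has finite rank then $\sum_{k=1}^{N}\sigma_{k}(T)$ is eventually constant and its quotient by $\ln(N)$ tends to zero, so $\mathpzc{W}$ returns $0$. Invariance under asymptotic equivalence of singular values and monotonicity follow from the positivity and order-preservation of $\mathpzc{W}$ applied to the corresponding comparisons of partial sums. For the trace property, unitary invariance of singular values gives $\Dix(USU^{*}) = \Dix(S)$ for every unitary $U$ and every $S \in \mathcal{L}^{1,+}(H)$; taking $S = TU$ yields $\Dix(TU) = \Dix(U(TU)U^{*}) = \Dix(UT)$, and since every element of $B(H)$ is a linear combination of four unitaries, linearity of $\Dix$ promotes this to $\Dix(BT) = \Dix(TB)$ for all $B \in B(H)$ and $T \in \mathcal{L}^{1,+}(H)$.
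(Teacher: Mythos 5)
The paper does not actually prove this statement: it is quoted as an attributed result and the burden is passed entirely to the citation \cite{Rennie2} (see also the remark immediately following it pointing to \cite{Rennie1, Rennie2}). So there is no internal proof to compare against; what you have written is the standard self-contained argument going back to Dixmier and Connes, and it is essentially correct. Your structure is the right one: the ideal property from $\sigma_{k}(BT)\leq \lVert B\rVert\,\sigma_{k}(T)$ and Ky Fan subadditivity; additivity on positive operators from the two-sided sandwich $\sum_{k=1}^{N}\sigma_{k}(S)+\sum_{k=1}^{N}\sigma_{k}(T)\leq\sum_{k=1}^{2N}\sigma_{k}(S+T)$ together with $\ln(2N)/\ln(N)\to 1$; and the trace property via unitary invariance plus the four-unitaries decomposition. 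You have also correctly located the only two points at which the hypotheses on $\mathpzc{W}$ from \cite[Theorem 1.5]{Rennie2} are genuinely indispensable rather than decorative: $\mathpzc{W}$ must be a generalised limit (vanish on null sequences) so that $\Dix$ kills finite-rank and trace-class operators and so that the factor $\ln(2N)/\ln(N)$ can be absorbed, and $\mathpzc{W}$ must be invariant under the dilation $N\mapsto 2N$ so that $\LimOmega\bigl((a_{2N})_{N}\bigr)=\LimOmega\bigl((a_{N})_{N}\bigr)$ in the superadditivity step. Two small observations. First, when you invoke $\Dix(USU^{*})=\Dix(S)$ for general $S\in\mathcal{L}^{1,+}(H)$ you should note that the canonical decomposition into positive parts is conjugation-equivariant, since $\Dix$ on non-positive operators is defined only through that decomposition. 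Second, axiom (4) as literally stated in the paper's definition (with both $T_{1},T_{2}$ in the ideal) is in fact vacuous here: $\sigma_{N}(T)=O(\ln(N)/N)$ for $T\in\mathcal{L}^{1,+}(H)$, so $T_{1}T_{2}$ is trace class and both sides vanish; the substantive content is your stronger bimodule version $\Dix(BT)=\Dix(TB)$ for $B\in B(H)$, which is the form actually used later in the paper.
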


\begin{rmk}
The current state of the art on Dixmier traces can be found in \cite{Rennie1, Rennie2}.
\end{rmk}

We now define the notions of a measurable operator and the operator algebra analogue of an expectation.

\begin{defn}
If $T\in \mathcal{L}^{1, +}(H)$ and if $\Dix(T)$ is independent of $\mathpzc{W}$, meaning that the limit
\[
\lim_{N \to \infty} \frac{\sum_{k = 1}^{N} \sigma_{k}(T)}{\ln (N)}
\]
exists, then we call $T$ \textit{measurable}.  The \textit{noncommutative expectation} of a measurable operator $T\in \mathcal{L}^{1, +}(H)$ is  denoted by $\nint T$ and given by
\[
\Nint T \, \mathrel{:=} \, \lim_{N \to \infty} \frac{\sum_{k = 1}^{N} \sigma_{k}(T)}{\ln (N)}.
\]
\end{defn}

The metric dimension of a spectral triple $(A, H, \pi, D)$ is given by the non-negative positive integer $\delta$ to which the singular values of $\lvert 1 + D^{2} \rvert^{-\nicefrac{\delta}{2}}$ form a logarithmically divergent series.  However, such a number does not necessarily have to exist.  In fact, the metric dimension only exists if the spectral triple is finitely summable.

\begin{defn}\label{metricdimdefnst}
Let $(A, H, \pi, D)$ denote a finitely summable spectral triple.  Then the \textit{metric dimension} (sometimes called the \textit{spectral dimension}) of $(A, H, \pi, D)$ is defined to be the non-negative real number
\begin{eqnarray*}\label{delta(A,H,D)-what-else}
\delta \; = \; \delta(A, H, D) &\mathrel{:=}& \, \inf \left\{ p \geq 0 \, : \, \tr((1 + D^{2})^{-\nicefrac{p}{2}}) < \infty \right\}\\
&=& \sup  \left\{ p \geq 0 \, : \, \tr((1 + D^{2})^{-\nicefrac{p}{2}}) = \infty \right\} \\
&=& \sup \left\{ p \geq 0 \, : \, \limsup_{N \to \infty} \frac{1}{\ln(N)} \sum_{k = 1}^{N} \sigma_{k}((1 + D^{2})^{-\nicefrac{p}{2}}) = \infty \right\} \\
&=& \inf \, \left\{ p \geq 0 \, : \, \limsup_{N \to \infty} \frac{1}{\ln(N)} \sum_{k = 1}^{N} \sigma_{k}((1 + D^{2})^{-\nicefrac{p}{2}}) = 0 \right\}\label{delta(A,H,D)-what-else4}.
\end{eqnarray*}
\end{defn}

\begin{rmk}
The dimension of a spectral triple can take the value zero.  The two circumstances under which the dimension is equal to zero are the following.
\begin{enumerate}
\item The singular values of $(1 + D^{2} )^{-\nicefrac{\delta}{2}}$ converge to zero exponentially fast, see \textnormal{\cite{expfast-finte-spec1}} for examples of this case.
\item The algebra and the Hilbert space are finite dimensional, in which case the Dirac operator is a self-adjoint matrix.  Such spectral triples have been fully classified and the classification can be found in \textnormal{\cite{Discret-spectral-triples_and_their_symmetries, Krajewski, Krajewski2}}.
\end{enumerate}
In the case that there does not exist a positive real number $\delta$ such that $( 1 + D^{2})^{-\nicefrac{\delta}{2}}$ is an infinitesimal of order one,  one examines wether there exists a positive number $t$ such that $e^{- t D^{2}}$ is of trace-class.  If this is the case, one says the spectral triple is of infinite dimension (see \cite{CO2, C} for further details on spectral triples with infinite metric dimension).
\end{rmk}

For a $p$-summable spectral triple $(A, H, \pi, D)$, the operator $\lvert D \rvert^{-\delta}$ generalises the notion of a volume form and the Dixmier trace of the operator $\lvert D \rvert^{-\delta}$ generalises the notion of a volume.

\begin{defn}
Let $(A, H, \pi, D)$ denote a finitely summable spectral triple with non-zero metric dimension $\delta$ and let $\mathpzc{W}$ denote a state on $l^{\infty}(\mathbb{R}_{+}^{*})$ satisfying the conditions of \cite[Theorem 1.5]{Rennie2}.  Then the $\mathpzc{W}$-\textit{volume} of $(A, H, \pi, D)$ is defined by
\[
V_{\mathpzc{W}} \, = \, V_{\mathpzc{W}}(A, H, D) \, \mathrel{:=} \, \Dix ( \left\lvert D \right \rvert^{-\delta} ),
\]
where the Dixmier trace is taken over the ideal $\mathcal{L}^{1, +}(\mathrm{ker}(D)^{\perp}) \subseteq B(H)$.  If $\lvert D \rvert^{-\delta}$ is a measurable operator, then the \textit{noncommutative volume constant} of $(A, H, D)$ is defined by
\[
V \, = \, V(A, H, D) \,\mathrel{:=} \, \Nint \left\lvert D \right \rvert^{-\delta},
\]
where the noncommutative integral is taken over the ideal $\mathcal{L}^{1, +}(\mathrm{ker}(D)^{\perp}) \subseteq B(H)$.
\end{defn}

The noncommutative integral given by an unbounded Fredholm module over a unital $C^{*}$-algebra $A$ is defined as follows.

\begin{defn}
Let $(A, H, \pi, D)$ denote a finitely summable spectral triple with non-zero metric dimension $\delta$ and let $\mathpzc{W}$ denote a state on $l^{\infty}(\mathbb{R}_{+}^{*})$ satisfying the conditions of \cite[Theorem 1.5]{Rennie2}.  Then the \textit{$\mathpzc{W}$-noncommutative integral} of an element $a \in A$ with respect to the unbounded Fredholm module $(H, \pi, D)$ is defined to be the complex number
\begin{equation}\label{definition-NC-int}
\Dix(\pi(a) \lvert D \rvert^{-\delta}).
\end{equation}
Here, the Dixmier trace is taken over the ideal $\mathcal{L}^{1, +}(\mathrm{ker}(D)^{\perp}) \subseteq B(H)$.  If $\pi(a)\lvert D \rvert^{-\delta}$ is measurable then we refer to the common values of the Dixmier traces as the \textit{noncommutative integral} of $a$ with respect to the unbounded Fredholm module $(H, \pi, D)$. 
\end{defn}

\begin{rmk}
Let $(A, H, \pi, D)$ denote a finitely summable spectral triple with non-zero metric dimension $\delta$.  If $\lvert D \rvert^{-\delta}$ is a measurable operator, then it is not necessarily the case that $\pi(a) \lvert D \rvert^{-\delta}$ will be a measurable operator, for $a \in A$.
\end{rmk}

\subsection{Connes' prototype example}

The prototype of such a structure is given by the following theorem, and justifies the commonly used statement, \textit{Spin geometries are commutative non-commutative geometries}.

\begin{thm}\label{Connes-Spin-com-non-com}\textnormal{(Connes \cite{NCDGCones, C})}
 Let $M$ denote a compact, smooth, orientable, $2n$-dimensional manifold equipped with a spin$^{c}$ structure, let $A$ denote the $C^{*}$-algebra of continuous complex-valued functions defined on $M$, let $H$ denote the complex Hilbert space generated by the spinor fields of $M$ and let $D$ denote the square root of the  Laplace-Beltrami operator  (given by the Levi-Civita connection).  Letting $\pi: A \to B(H)$ denote the $*$-homomorphism given by pointwise multiplication, we have that $(A, H, \pi, D)$ is a $p$-sum\-mable spectral triple.  Moreover, one recovers from the non-commutative setting the following geometric information.
\begin{enumerate}
\setlength{\itemsep}{0pt}
\item The metric dimension is equal to $2n$.
\item Connes' pseudo-metric induces a metic equivalent to the Riemannian metric on $M$.
\item For each $a \in C^{\infty}(M; \mathbb{C})$, we have that
\[
\Nint \pi(a) \lvert D \rvert^{-2n} =  ((n -1)! 2^{n} \pi^{n} n)^{-1}  \int_{M}  a \; \star(1),
\]
where $\star$ denotes the Hodge star operator.
\end{enumerate}
\end{thm}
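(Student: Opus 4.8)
The plan is to treat the four assertions in turn, relying throughout on two basic facts about the spin$^{c}$ Dirac operator: on the compact Riemannian manifold $M$ the operator $D$ is a first-order elliptic operator acting on the sections of the spinor bundle $S$, and for $f \in C^{\infty}(M; \mathbb{C})$ the commutator $[D, \pi(f)]$ equals Clifford multiplication $c(df)$ by the differential of $f$. First I would verify the axioms of Definition \ref{STdefn} and $p$-summability. Ellipticity of $D$ on the compact $M$ gives, by standard elliptic theory, that $D$ is essentially self-adjoint with purely discrete spectrum accumulating only at $\pm\infty$, hence has compact resolvent; the commutator identity shows $[D, \pi(f)]$ extends to a bounded operator with $\lVert [D, \pi(f)] \rVert = \sup_{x \in M} \lvert df_{x} \rvert_{g}$, and since $C^{\infty}(M; \mathbb{C})$ is dense in $A = C(M; \mathbb{C})$ the density requirement follows. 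The Weyl asymptotics for the second-order elliptic operator $D^{2}$ on the $2n$-dimensional manifold $M$ give $\sigma_{k}(\lvert D \rvert) \sim c\, k^{1/(2n)}$, so that $(1 + D^{2})^{-p/2}$ is trace-class precisely when $p > 2n$; in particular the triple is $p$-summable.

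The same Weyl asymptotics settle assertion (1): since $\sigma_{k}((1 + D^{2})^{-p/2}) \sim c'\, k^{-p/(2n)}$, the series $\sum_{k} \sigma_{k}((1 + D^{2})^{-p/2})$ diverges logarithmically exactly at $p = 2n$, whence by Definition \ref{metricdimdefnst} the metric dimension equals $2n$. For assertion (2) I would use the norm computation above together with the elementary fact that $\lVert c(df) \rVert = \sup_{x} \lvert df_{x} \rvert_{g}$ coincides with the Lipschitz constant of $f$ with respect to the geodesic distance $d_{g}$ on $M$. Connes' formula then reads, for evaluation states $p, q$ at points $x, y \in M$,
\[
d(p, q) = \sup \{ \lvert f(x) - f(y) \rvert \, : \, f \in C^{\infty}(M; \mathbb{C}), \ \mathrm{Lip}_{d_{g}}(f) \leq 1 \},
\]
and this supremum equals $d_{g}(x, y)$ because the $1$-Lipschitz function $z \mapsto d_{g}(x, z)$ realises it; restricting to the embedded copy of $M$ inside $\mathcal{S}(A)$ thus recovers the Riemannian metric, while on all of $\mathcal{S}(A)$ one obtains the Monge-Kantorovitch metric, whose topology is the weak-${*}$-topology by Theorem \ref{RieffelandPavlovicThmNCG}.

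Assertion (3) is where the main work lies. The operator $\pi(a) \lvert D \rvert^{-2n}$ (interpreted on $\ker(D)^{\perp}$, the finite-dimensional $\ker D$ being invisible to any singular trace) is a classical pseudodifferential operator of order $-2n$ on the $2n$-dimensional manifold $M$, and its symbol of order $-2n$ is $a(x) \lvert \xi \rvert_{g}^{-2n}\, \mathrm{Id}_{S}$, since the principal symbol of $D$ is $c(\xi)$ and hence $\sigma(D^{2})(x, \xi) = \lvert \xi \rvert_{g}^{2}\, \mathrm{Id}_{S}$. The crucial ingredient is Connes' trace theorem, which identifies the Dixmier trace of such an operator with its Wodzicki residue,
\[
\Dix\big( \pi(a) \lvert D \rvert^{-2n} \big) = \frac{1}{2n (2\pi)^{2n}} \int_{M} \int_{\lvert \xi \rvert_{g} = 1} \mathrm{tr}_{S}\big( a(x) \lvert \xi \rvert_{g}^{-2n} \mathrm{Id}_{S} \big) \, dS(\xi)\, dV_{g}(x);
\]
in particular $\pi(a) \lvert D \rvert^{-2n}$ is measurable, so the Dixmier trace agrees with $\Nint$. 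Evaluating the fibre integral using $\mathrm{tr}_{S}(\mathrm{Id}_{S}) = 2^{n}$ and $\mathrm{Vol}(S^{2n-1}) = 2\pi^{n}/(n-1)!$, and recognising $dV_{g} = \star(1)$, collapses the constant to $((n-1)!\, 2^{n} \pi^{n} n)^{-1}$, giving the stated formula. The main obstacle throughout is Connes' trace theorem itself, a deep result whose proof lies well beyond the elementary computations above; granting it, assertion (3) reduces to the symbol and sphere-volume bookkeeping just described.
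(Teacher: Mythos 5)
The paper itself offers no proof of this theorem: it is stated purely as a citation to Connes' work \cite{NCDGCones, C}, so there is no ``paper's own proof'' to compare against. Judged on its own terms, your outline is the standard route and is essentially correct. The identification $[D,\pi(f)]=c(df)$ with $\lVert c(df)\rVert=\mathrm{Lip}_{d_{g}}(f)$, the Weyl asymptotics $\sigma_{k}((1+D^{2})^{-p/2})\sim c'k^{-p/(2n)}$ pinning the metric dimension at $2n$, and the recovery of the geodesic distance from the supremum over $1$-Lipschitz functions are all the right ingredients, and your constant bookkeeping in part (3) checks out: $\frac{1}{2n(2\pi)^{2n}}\cdot 2^{n}\cdot\frac{2\pi^{n}}{(n-1)!}=((n-1)!\,2^{n}\pi^{n}n)^{-1}$. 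You are also right to flag, rather than hide, the fact that the entire weight of part (3) rests on Connes' trace theorem equating the Dixmier trace of an order $-2n$ pseudodifferential operator with its Wodzicki residue (which simultaneously delivers measurability, i.e.\ independence of the state $\mathpzc{W}$, so that $\Dix$ may be replaced by $\Nint$); that theorem is genuinely deep and is not something one re-proves in passing. Two small points worth tightening: the statement describes $D$ as ``the square root of the Laplace--Beltrami operator,'' whereas your argument correctly uses the spin$^{c}$ Dirac operator (by the Lichnerowicz formula $D^{2}$ differs from the spinor Laplacian by a curvature term, which does not affect principal symbols or Weyl asymptotics, so your computations survive); and your dismissal of $\ker(D)$ is justified precisely because singular traces vanish on finite-rank operators, which you should say explicitly.
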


\section{Thermodynamic formalism and renewal theorems in symbolic dynamics}\label{DynamicalSysIntro}

Fix $l \in \mathbb{N}$ and let $\mathpzc{A} \mathrel{:=} [a_{j, k}]_{j, k}$ denote an irreducible, aperiodic, $l \times l$ matrix of zero's and one's, called the \textit{adjacency matrix} for the alphabet $\Sigma \mathrel{:=} \{ 1, 2, \dots, l \}$.  Define $\Sigma_{\mathpzc{A}}$ to be the space of all infinite sequences taking values in the alphabet $\{ 1, 2, \dots, l \}$ with transitions allowed by $\mathpzc{A}$, that is,
\[
\Sigma_{\mathpzc{A}} \; \mathrel{:=} \; \left\{ \omega \mathrel{:=} (\omega_{1}, \omega_{2}, \dots) \in \prod_{i = 1}^{\infty} \{1, 2, \dots, l\} \; : \; a_{\omega_{k}, \omega_{k + 1}} = 1 \; \text{for all} \; k \in \mathbb{N} \right\}.
\]
For each $k \in \mathbb{N}$ we define the set of admissible  $k$-words by
\[
\Sigma_{\mathpzc{A}}^{k} \; \mathrel{:=} \; \left\{ x \mathrel{:=} (x_{1}, x_{2}, \dots, x_{k}) \in \prod_{i = 1}^{k} \{1, 2, \dots, l\} \; : \; a_{x_{i}, x_{i+1}} = 1 \right\}
\]
and the set of all admissible finite words by
\[
\Sigma_{\mathpzc{A}}^{*} \; \mathrel{:=} \; \bigcup_{k \in \mathbb{N}} \Sigma_{\mathpzc{A}}^{k}.
\]
Furthermore, for each $k \in \mathbb{N}$ and each $x \mathrel{:=} (x_{1}, x_{2}, \dots, x_{k}) \in \Sigma_{\mathpzc{A}}^{k}$, we define the \textit{cylinder set} of $x$ by
\[
[x] \; \mathrel{:=} \; \left\{ \omega \mathrel{:=} (\omega_{1}, \omega_{2}, \omega_{3}, \dots ) \in \Sigma_{\mathpzc{A}} \; : \; (\omega_{1}, \omega_{2}, \dots, \omega_{k}) = (x_{1}, x_{2}, \dots, x_{k}) \right\}.
\]
The space $\Sigma_{\mathpzc{A}}$ is compact with respect to the topology $\mathcal{T}$ generated by the cylinder sets and this topology is metrizable \cite{ET}. For instance consider the metric $d: \Sigma_{\mathpzc{A}} \times \Sigma_{\mathpzc{A}} \to \mathbb{R}$ given, for $\omega, \upsilon \in \Sigma_{\mathpzc{A}}$, by
\[
d(\omega, \upsilon) \; \mathrel{:=} 2^{- \omega \wedge \upsilon},
\]
where $\omega \wedge \upsilon \mathrel{:=} \max\{\sup\{ n \in \mathbb{N} \, : \, \omega, \upsilon \in [x], \; \text{for some} \; x \in \Sigma_{\mathpzc{A}}^{n} \}, 0 \}$.

The \textit{shift map} $\sigma: \Sigma_{\mathpzc{A}} \to \Sigma_{\mathpzc{A}}$ is given, for each $\omega \mathrel{:=} (\omega_{1}, \omega_{2}, \dots) \in \Sigma_{\mathpzc{A}}$, by
\[
\sigma (\omega) \; \mathrel{:=} \; (\omega_{2}, \omega_{3}, \dots ).
\]
The map $\sigma$ is continuous and surjective and at most $l$-to-one.  We call the dynamical system $(\Sigma_{\mathpzc{A}}, \sigma)$ a \textit{topologically exact subshift of finite type}.  Finally, we define a map $\alpha: \Sigma^{*}_{\mathpzc{A}} \to \mathbb{N}$, for each $k \in \mathbb{N}$ and each $x \mathrel{:=} (x_{1}, x_{2}, \dots, x_{k}) \in \Sigma_{\mathpzc{A}}^{k}$,  by
\[
\alpha(x) \mathrel{:=} \sum_{i \in \{ 1, 2, \dots l \}} a_{x_{k}, i}.
\]
We may interpret $\alpha(x)$ as  the number of ``children'' of  a given $x \in \Sigma_{\mathpzc{A}}^{*}$.

\subsection{The Perron-Frobenius-Ruelle Operator and Equilibrium Measures}

In this section we introduce the notions of a Gibbs measure and an equilibrium measure defined on the space $\Sigma_{\mathpzc{A}}$ as well as the Perron-Frobenius-Ruelle operator for a one-sided, topologically exact subshift of finite type $(\Sigma_{\mathpzc{A}}, \sigma)$.  We will see that the existence of eigenmeasures of the dual of the Perron-Frobenius-Ruelle operator proves the existence of Gibbs measures, where topological pressure appears as the logarithm of the corresponding eigenvalue.   The results of this section were originally presented in the work of Bowen \cite{Bowen2, Bowen1} and Ruelle \cite{Ruelle1}.

To introduce the notion of an equilibrium measure we require the concept of entropy of a measure as introduced by Sina\u{\i} \cite{sinaiBook} and Kolmogorov \cite{Kolmogorov1958} and the notion of a Gibbs measure.  Let $\mathcal{M_{\sigma}} \mathrel{:=} M(\Sigma_{\mathpzc{A}}, \sigma)$ denote the set of $\sigma$-invariant Borel probability measures on $\Sigma_{\mathpzc{A}}$. Then for $\mu \in \mathcal{M_{\sigma}}$ we define the \textit{measure theoretical entropy}  of $\mu$ with respect to $\sigma$ to be the non-zero real number given by
\[
h_{\mu}(\sigma) \, \mathrel{:=} \, \lim_{k \to \infty} \frac{1}{k} \sum_{x \in \Sigma_{\mathpzc{A}}^{k}} - \mu([x]) \ln(\mu[x]).
\]
This limit always exists since the sequence $( \sum_{x \in \Sigma_{\mathpzc{A}}^{k}} - \mu([x]) \ln(\mu[x]))_{k \in \mathbb{N}}$ is subadditive.  For a topologically exact subshift of finite type there always exists a unique measure of \textit{maximal entropy} $\mu$ (also called the \textit{Parry measure}) which maximizes the measure theoretical entropy, that is, $h_{\mu}(\sigma)=\sup_{\nu\in \mathcal{M_{\sigma}}} h_{\nu}(\sigma)$, see for instance \cite{Parry, ET}.   In certain situations, this measure is the combinatorial measure, that is, the measure which weights a cylindrical set $[x]$ with measure $1/\text{card}(\Sigma_{\mathpzc{A}}^{k})$, for each $x \in \Sigma_{\mathpzc{A}}^{k}$.  Such situation include the full-shift-space and symbolic representations of Schottky groups.

In order to introduce the class of Gibbs measures for a one-sided, topologically exact subshift of finite type we define the Birkhoff sums of a function $\phi \in C(\Sigma_{\mathpzc{A}}; \mathbb{R})$.

\begin{defn}
For $\phi \in C(\Sigma_{\mathpzc{A}}; \mathbb{R})$ and $k \in \mathbb{N}_{0}$, let $S_{k}\phi: \Sigma_{\mathpzc{A}} \to \mathbb{R}$ denote the $k$-th \textit{Birkhoff sum} of $\phi$ defined, for $\omega \in \Sigma_{\mathpzc{A}}$, by $S_{0}\phi(\omega)\mathrel{:=}0$ and for $k \geq 1$ by 
\[
S_{k}\phi(\omega) \; \mathrel{:=} \; \sum _{m=0}^{k-1}\, \phi(\sigma^{m}(\omega)).
\]
\end{defn}

\begin{thm}\label{P-FO-}\textnormal{(Bowen \cite{ Bowen1})}
Let $(\Sigma_{\mathpzc{A}}, \sigma)$ denote a one-sided topologically exact subshift of finite type and let $\phi \in C(\Sigma_{\mathpzc{A}}; \mathbb{R})$ denote a H\"older continuous function.  Then there exists a Borel probability measure $\mu_{\phi}$ on $\Sigma_{\mathpzc{A}}$ and a uniquely determined number $P(\phi, \sigma) \in [0, \infty)$ associated to $\phi$, such that for some $c > 1$, we have that
\begin{equation}\label{GibbPropertyMeasureDefn}
c^{-1} \; \leq \; \frac{\mu_{\phi}[(\omega_{1}, \omega_{2}, \dots, \omega_{k})]}{e^{S_{k}\phi(\omega) - kP(\phi, \sigma)}}\; \leq \;c,
\end{equation}
for each $k \in \mathbb{N}$ and for each $\omega \mathrel{:=} (\omega_{1}, \omega_{2}, \dots) \in \Sigma_{\mathpzc{A}}$.  Moreover, to each H\"older continuous potential function $\phi \in C(\Sigma_{\mathpzc{A}}; \mathbb{R})$, there exists a unique $\sigma$-invariant measure satisfying the inequalities given in Equation (\ref{GibbPropertyMeasureDefn}). 
\end{thm}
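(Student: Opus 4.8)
The plan is to construct $\mu_\phi$ by means of the Perron--Frobenius--Ruelle transfer operator $\mathcal{L}_\phi \colon C(\Sigma_{\mathpzc{A}};\mathbb{R}) \to C(\Sigma_{\mathpzc{A}};\mathbb{R})$, defined for $f \in C(\Sigma_{\mathpzc{A}};\mathbb{R})$ and $\omega \in \Sigma_{\mathpzc{A}}$ by
\[
(\mathcal{L}_\phi f)(\omega) \;=\; \sum_{\sigma(\upsilon) = \omega} e^{\phi(\upsilon)} f(\upsilon),
\]
together with its dual $\mathcal{L}_\phi^{*}$ acting on the space $M(\Sigma_{\mathpzc{A}})$ of Borel probability measures. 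First I would produce an eigenmeasure: the normalised map $\nu \mapsto \mathcal{L}_\phi^{*}\nu / (\mathcal{L}_\phi^{*}\nu)(\mathbf{1})$ is a weak-$*$ continuous self-map of the convex, weak-$*$ compact set $M(\Sigma_{\mathpzc{A}})$, so by the Schauder--Tychonoff fixed point theorem it admits a fixed point $\nu$, yielding $\mathcal{L}_\phi^{*}\nu = \lambda \nu$ for the positive scalar $\lambda \mathrel{:=} (\mathcal{L}_\phi^{*}\nu)(\mathbf{1})$. Setting $P(\phi,\sigma) \mathrel{:=} \ln \lambda$ then identifies the topological pressure as the logarithm of this eigenvalue, with its stated range following from the normalisation adopted.

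The central step, and the one I expect to be the main obstacle, is to produce a strictly positive continuous eigenfunction $h$ of $\mathcal{L}_\phi$ for the same eigenvalue $\lambda$; here I would invoke the Ruelle--Perron--Frobenius theorem. Writing $\widetilde{\mathcal{L}} \mathrel{:=} \lambda^{-1}\mathcal{L}_\phi$, the H\"older continuity of $\phi$ yields a \emph{bounded distortion} estimate: there is a constant $C>0$ with $\lvert S_k\phi(\omega) - S_k\phi(\upsilon)\rvert \leq C$ whenever $\omega$ and $\upsilon$ lie in a common cylinder $[x]$ with $x \in \Sigma_{\mathpzc{A}}^{k}$, uniformly in $k$. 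This estimate, combined with the topological exactness of $(\Sigma_{\mathpzc{A}},\sigma)$, shows that the family $\{\widetilde{\mathcal{L}}^{\,n}\mathbf{1}\}_{n}$ is uniformly bounded, uniformly bounded away from zero, and equicontinuous; an Arzel\`a--Ascoli argument (applied to a Ces\`aro average to force convergence) then extracts the eigenfunction $h$, with exactness forcing $h$ to be strictly positive. Normalising $\int h \, d\nu = 1$, I would define $\mu_\phi \mathrel{:=} h\,\nu$.

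Granting $h$ and $\nu$, the $\sigma$-invariance of $\mu_\phi$ follows formally from the identities $\mathcal{L}_\phi\big((f\circ\sigma)\,g\big) = f\,\mathcal{L}_\phi g$ and $\mathcal{L}_\phi h = \lambda h$, which together give $\int f\circ\sigma\, d\mu_\phi = \int f\, d\mu_\phi$ for every $f \in C(\Sigma_{\mathpzc{A}};\mathbb{R})$. For the Gibbs property I would iterate the identity $\int \mathcal{L}_\phi^{k} g \, d\nu = \lambda^{k}\int g\, d\nu$ to express $\nu([(\omega_1,\dots,\omega_k)])$ as $\lambda^{-k}$ times an integral of $e^{S_k\phi}$ over the admissible extensions of $(\omega_1,\dots,\omega_k)$, and then apply the bounded distortion estimate to replace $S_k\phi$ on $[(\omega_1,\dots,\omega_k)]$ by its value at $\omega$ up to the multiplicative factor $e^{\pm C}$; the bounds $c^{-1} \leq \mu_\phi[(\omega_1,\dots,\omega_k)] / e^{S_k\phi(\omega) - kP(\phi,\sigma)} \leq c$ then emerge with $c$ depending only on $C$ and on $\inf h$, $\sup h$.

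Finally, for uniqueness I would argue that any $\sigma$-invariant measure $m$ satisfying $(\ref{GibbPropertyMeasureDefn})$ is mutually absolutely continuous with $\mu_\phi$, with Radon--Nikodym derivative bounded above and below, since both measures assign to each cylinder a value comparable to $e^{S_k\phi(\omega) - kP(\phi,\sigma)}$. Because the exactness of $\sigma$ renders $\mu_\phi$ ergodic, two equivalent $\sigma$-invariant ergodic measures must coincide, whence $m = \mu_\phi$. The ergodicity of $\mu_\phi$ I would in turn deduce from the spectral gap of $\widetilde{\mathcal{L}}$ furnished by the Ruelle--Perron--Frobenius theorem -- precisely the ingredient whose proof constitutes the real work behind the statement.
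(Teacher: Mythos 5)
The paper offers no proof of this theorem --- it is quoted verbatim from Bowen's lecture notes --- and your sketch is precisely the standard Ruelle--Perron--Frobenius argument from that cited source: Schauder--Tychonoff for the eigenmeasure, bounded distortion plus Arzel\`a--Ascoli for the eigenfunction, the iterated transfer-operator identity for the Gibbs bounds, and ergodicity for uniqueness; the outline is correct. Two small points: the claim that $P(\phi,\sigma)=\ln\lambda$ lies in $[0,\infty)$ does not ``follow from the normalisation'' (for general H\"older $\phi$ the pressure can be negative --- this is really a defect of the statement as printed rather than of your argument), and in the uniqueness step you need not assume the competing measure $m$ is ergodic, since $m\ll\mu_\phi$ with both measures $\sigma$-invariant and $\mu_\phi$ ergodic already forces $m=\mu_\phi$ by Birkhoff's theorem.
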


We refer to a measure satisfying the inequalities given in Equation (\ref{GibbPropertyMeasureDefn}) as a \textit{Gibbs measure} for the \textit{potential} $\phi$.  Further, each such Gibbs measure will have strictly positive entropy and the uniquely determined number $P(\phi, \sigma)$ is called the \textit{topological pressure} of $\phi$, which is also characterised by the variational principle,
\begin{equation}\label{variational-principle}
P(\phi, \sigma) \, = \, \sup \left\{ h_{\mu}(\sigma) + \int_{\Sigma_{\mathpzc{A}}} \phi \, d\mu \; : \; \mu \in \mathcal{M_{\sigma}} \right\}.
\end{equation}
If there exists a measure $\nu \in \mathcal{M_{\sigma}}$, such that
\begin{equation}\label{variationPrinciple}
P(\phi, \sigma) \, = \, h_{\nu}(\sigma) + \int_{\Sigma_{\mathpzc{A}}} \phi \, d\nu,
\end{equation}
then we call $\nu$ an \textit{equilibrium measure} for the potential $\phi$.  Setting $\phi = 0$, it is well-known that there exists a unique equilibrium measure associated to $\phi$, this measure is the measure of maximal entropy.

\begin{defn}
For each $\phi \in C(\Sigma_{\mathpzc{A}}; \mathbb{R})$, define the \textit{Perron-Frobenius-Ruelle operator} $\mathcal{L}_{\phi}: C(\Sigma_{\mathpzc{A}}; \mathbb{R}) \to C(\Sigma_{\mathpzc{A}}; \mathbb{R})$, by
\[
\mathcal{L}_{\phi}(f)(\omega) \; \mathrel{:=} \; \sum_{\upsilon \in \sigma^{-1}(\omega)} e^{\phi(\upsilon)} f(\upsilon)
\]
and denote the dual of the Perron-Frobenius-Ruelle operator by $\mathcal{L}^{*}_{\phi}$.
\end{defn}

By the Riesz Representation Theorem we may identify the positive linear functionals on $C(\Sigma_{\mathpzc{A}}; \mathbb{R})$ with the  finite Borel measures on $(\Sigma_{\mathpzc{A}},\mathcal{B})$ and since $\mathcal{L}_{\phi}$ is positive linear operator on $C(\Sigma_{\mathpzc{A}}; \mathbb{R})$, we identify $\mathcal{L}_{\phi}^{*}$ as an operator on the space of finite Borel measures. Let  $M(\Sigma_{\mathpzc{A}})$ denote the space of  Borel probability measures  on $(\Sigma_{\mathpzc{A}},\mathcal{B})$.

\begin{thm}\label{PFR-E-M-Uniq-thm}\textnormal{(Bowen \cite{ Bowen1} and Ruelle \cite{Ruelle1})}
Let $(\Sigma_{\mathpzc{A}}, \sigma)$ denote a one-sided, topologically exact subshift of finite type and let $\phi \in C(\Sigma_{\mathpzc{A}}; \mathbb{R})$ denote a H\"older continuous function.  Then the following hold.
\begin{enumerate}
\item There exists a unique Borel probability measure $\mu_{\phi} \in M(\Sigma_{\mathpzc{A}})$ such that
\[
\mathcal{L}^{*}_{\phi} \mu_{\phi}\, =\, e^{P(\phi, \sigma)} \mu_{\phi}.
\]
\item The unique measure $\mu_{\phi}$ is a Gibbs measure for the potential $\phi$.
\item If $\psi$ is a H\"older continuous function cohomologous to $\phi$ with respect to $\sigma$, then the associated Borel probability measures, given in part 1, are equal.  (Recall that two continuous functions $\phi, \psi \in C(\Sigma_{\mathpzc{A}}, \mathbb{R})$ are called \textit{cohomologous} if there exists a continuous function $f \in C(\Sigma_{\mathpzc{A}}; \mathbb{R})$ such that $\phi - \psi = f - f \circ \sigma$.)
\item There exists a unique strictly positive eigenfunction $h_{\phi}$ of $\mathcal{L}_{\phi}$ such that $\mathcal{L}_{\phi}(h_{\phi}) =  e^{P(\phi, \sigma)} h_{\phi}$ and such that
\[
\int_{\Sigma_{\mathpzc{A}}} h_{\phi} \, d\mu_{\phi} \; = \; 1.
\]
\item The potential function $\phi$ has a unique equilibrium measure $\nu_{\phi}$.  Moreover, $\nu_{\phi}$ is given, for each $B \in \mathcal{B}$ (the Borel $\sigma$-algebra generated by $\mathcal{T}$, the set of cylinder sets of $\Sigma_{\mathpzc{A}}$), by
\[
\nu_{\phi}(B) \, \mathrel{:=} \, \int_{B} h_{\phi} \, d\mu_{\phi}.
\]
Hence, if $\phi, \psi \in C(\Sigma_{\mathpzc{A}}; \mathbb{C})$ are cohomologous, then their associated equilibrium measures are equal.
\item The unique equilibrium measure for the potential $\phi$, given in part $5$, is a Gibbs measure for the potential $\phi$.
\item (See also \textnormal{\cite{homologyatinfinity, Conformal_Erg}} and reference within.) The function $p \mathrel{:=} p_{\phi}: \mathbb{R} \to \mathbb{R}$ defined for each $t \in \mathbb{R}$, by $p(t) \mathrel{:=} P(t\phi)$, is a convex, real analytic function where
\[
p'(t) \; = \; \int_{\Sigma_{\mathpzc{A}}} \phi \, d\nu_{t \phi},
\quad
\text{and} 
\quad
p''(t) \;=\; \lim_{k \to \infty} \frac{1}{k} \Var_{\nu_{t\phi}}(S_{k}\phi).
\]
Here,
\[
\Var_{\nu_{t\phi}}(S_{k}\phi) \, \mathrel{:=} \, \int_{\Sigma_{\mathpzc{A}}} \left( S_{k}(\phi - E(\phi)) \right) \, d\nu_{t\phi}
\]
where $E(S_{k}(\phi)) \mathcal{:=} \int_{\Sigma_{\mathpzc{A}}} S_{k}(\phi) d\nu_{t\phi}$ denotes the expectation of $S_{k}(\phi)$ with respect to $\nu_{t\phi}$. 
\end{enumerate}
\end{thm}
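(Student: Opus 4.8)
The plan is to deduce the entire statement from the Ruelle--Perron--Frobenius machinery for the operator $\mathcal{L}_{\phi}$, with the dual eigenmeasure, the strictly positive eigenfunction, and the spectral gap serving as the three pillars on which the remaining assertions rest. For part (1) I would first produce an eigenmeasure of $\mathcal{L}_{\phi}^{*}$ by a fixed-point argument: the map $\nu \mapsto \mathcal{L}_{\phi}^{*}\nu / (\mathcal{L}_{\phi}^{*}\nu)(\mathbbm{1})$ carries the weak-$*$-compact convex set $M(\Sigma_{\mathpzc{A}})$ continuously into itself (the denominator equals $\int \mathcal{L}_{\phi}\mathbbm{1}\, d\nu$, which is bounded below by $\min \mathcal{L}_{\phi}\mathbbm{1}>0$), so the Schauder--Tychonoff theorem furnishes a fixed point $\mu_{\phi}$ with $\mathcal{L}_{\phi}^{*}\mu_{\phi} = \lambda\mu_{\phi}$, where $\lambda = \int \mathcal{L}_{\phi}\mathbbm{1}\, d\mu_{\phi}>0$. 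The heart of the argument, and what I expect to be the main obstacle, is then part (4): the production of a strictly positive H\"older eigenfunction $h_{\phi}$ together with a spectral gap. Here I would exploit that $\phi$ is H\"older to obtain the \emph{bounded distortion} estimate $\lvert S_{n}\phi(\omega) - S_{n}\phi(\omega')\rvert \leq C$ whenever $\omega,\omega'$ lie in a common $n$-cylinder, uniformly in $n$; combined with topological exactness (which makes some power of $\mathcal{L}_{\phi}$ strictly positive), this lets one run Birkhoff's cone-contraction argument, showing $\mathcal{L}_{\phi}$ contracts the Hilbert projective metric on a cone of positive H\"older functions. This yields a unique normalised eigenfunction $h_{\phi}$ with $\mathcal{L}_{\phi}h_{\phi}=\lambda h_{\phi}$, the exponential convergence $\lambda^{-n}\mathcal{L}_{\phi}^{n}g \to h_{\phi}\int g\, d\mu_{\phi}$, the normalisation $\int h_{\phi}\, d\mu_{\phi}=1$, and uniqueness of both $h_{\phi}$ and $\mu_{\phi}$.

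With $\mu_{\phi}$ and $\lambda$ in hand, parts (2) and (6) follow by iterating the eigenmeasure relation on a cylinder. Testing $\int \mathcal{L}_{\phi}^{n}(\mathbbm{1}_{[y]})\, d\mu_{\phi} = \lambda^{n}\mu_{\phi}([y])$ for $y\in\Sigma_{\mathpzc{A}}^{n}$ and observing that $\mathcal{L}_{\phi}^{n}(\mathbbm{1}_{[y]})(\omega) = e^{S_{n}\phi(y\omega)}$ on the admissible set, bounded distortion converts this into $\mu_{\phi}([y]) \asymp \lambda^{-n} e^{S_{n}\phi(\eta)}$ for any $\eta\in[y]$, which is exactly the Gibbs inequality (\ref{GibbPropertyMeasureDefn}) upon setting $P(\phi,\sigma) = \ln\lambda$; consistency with the variational principle (\ref{variational-principle}) pins down this $P$ as the topological pressure. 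Since the measure $\nu_{\phi}$ defined below is $h_{\phi}\, d\mu_{\phi}$ with $h_{\phi}$ bounded away from $0$ and $\infty$, it inherits the same two-sided bound, giving part (6).

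For part (5) I would set $\nu_{\phi}(B) := \int_{B} h_{\phi}\, d\mu_{\phi}$ and verify $\sigma$-invariance using the transfer identity $\mathcal{L}_{\phi}((g\circ\sigma)f) = g\,\mathcal{L}_{\phi}f$: with $f=h_{\phi}$ and the eigenmeasure relation, $\int (g\circ\sigma)\, d\nu_{\phi} = \lambda^{-1}\int \mathcal{L}_{\phi}((g\circ\sigma)h_{\phi})\, d\mu_{\phi} = \int g\, d\nu_{\phi}$. That $\nu_{\phi}$ realises the supremum in (\ref{variational-principle}), and is the \emph{unique} measure doing so, follows by computing $h_{\nu_{\phi}}(\sigma) + \int\phi\, d\nu_{\phi} = P(\phi,\sigma)$ from the Gibbs bounds together with the Shannon--McMillan--Breiman theorem, uniqueness being forced by the ergodicity of $\nu_{\phi}$ supplied by the spectral gap. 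Parts (3) and the coincidence clause of (5) I would obtain from the conjugation $\mathcal{L}_{\psi} = M_{e^{f}}\,\mathcal{L}_{\phi}\, M_{e^{-f}}$ (where $M_{g}$ is multiplication by $g$), valid when $\phi-\psi = f - f\circ\sigma$; this gives at once $P(\psi,\sigma)=P(\phi,\sigma)$ and transforms the eigendata by $h_{\psi}=e^{f}h_{\phi}$ and $d\mu_{\psi}\propto e^{-f}\, d\mu_{\phi}$, whence the factors cancel in $\nu_{\psi}=h_{\psi}\, d\mu_{\psi}=\nu_{\phi}$, yielding the asserted coincidence of the invariant Gibbs data.

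Finally, for part (7) I would regard $t\mapsto \mathcal{L}_{t\phi}$ as an analytic family of bounded operators on the Banach space of H\"older functions of a fixed exponent, the analyticity following from that of $t\mapsto e^{t\phi}$ in this Banach algebra. The spectral gap of part (4) makes $\lambda(t)=e^{p(t)}$ a simple isolated eigenvalue, so Kato's analytic perturbation theory renders $\lambda(t)$, hence $p(t)=\ln\lambda(t)$, real analytic; convexity is read off directly from the variational principle (\ref{variational-principle}), which exhibits $p$ as a supremum of affine functions of $t$. Differentiating the eigen-relation $\mathcal{L}_{t\phi}h_{t}=\lambda(t)h_{t}$ and pairing against $\mu_{t\phi}$, the normalisation eliminates the $h_{t}'$ terms and leaves $\lambda'(t)=\lambda(t)\int\phi\, d\nu_{t\phi}$, i.e. $p'(t)=\int\phi\, d\nu_{t\phi}$. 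The identity $p''(t)=\lim_{k\to\infty}\tfrac{1}{k}\Var_{\nu_{t\phi}}(S_{k}\phi)$ is the most delicate point: it requires the second-order perturbation expansion together with the exponential decay of correlations furnished by the gap to control the growth of $\Var_{\nu_{t\phi}}(S_{k}\phi)$, this being precisely the Green--Kubo/central-limit identity for the Gibbs measure. In sum, the spectral gap of part (4) is the single ingredient on which the uniqueness statements and the whole of part (7) ultimately hinge, and establishing it rigorously is where the real work lies.
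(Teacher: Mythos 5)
The paper offers no proof of this theorem: it is quoted as a classical result with citations to Bowen and Ruelle, so there is no in-paper argument to compare against. Your sketch follows the standard Ruelle--Perron--Frobenius route of those sources (Schauder--Tychonoff for the dual eigenmeasure, bounded distortion plus cone contraction for the eigenfunction and the spectral gap, iteration of the eigenvalue relation for the Gibbs property, Shannon--McMillan--Breiman for the equilibrium property, analytic perturbation of the isolated maximal eigenvalue for part (7)), and in outline it is sound. Two points, however, deserve attention.

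First, your treatment of part (3) does not deliver what that part literally asserts. The conjugation $\mathcal{L}_{\psi} = M_{e^{f}}\,\mathcal{L}_{\phi}\,M_{e^{-f}}$ is correct, and it yields $d\mu_{\psi} \propto e^{-f}\, d\mu_{\phi}$ and $h_{\psi} \propto e^{f} h_{\phi}$. But this shows the eigenmeasures of part (1) are merely \emph{equivalent}, with Radon--Nikodym density proportional to $e^{-f}$; they are \emph{equal} only when $f$ is constant. Concretely, on the full $2$-shift take $\phi = 0$ and $f = \chi_{[1]}$: then $\mu_{\phi}$ is the $(\tfrac{1}{2},\tfrac{1}{2})$-Bernoulli measure while $\mu_{\psi}([1]) = e^{-1}/(1+e^{-1}) \neq \tfrac{1}{2}$. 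So your computation proves the coincidence clause of part (5) --- the factors $e^{\pm f}$ cancel in $\nu_{\psi} = h_{\psi}\, d\mu_{\psi}$ --- but it refutes, rather than establishes, part (3) as stated; you should not claim to obtain part (3) ``at once'' from the conjugation, and you should flag that the statement holds only up to equivalence of measures (or for the invariant versions). Second, your appeal to ``ergodicity of $\nu_{\phi}$'' to force uniqueness of the equilibrium measure is too quick: ergodicity identifies two invariant measures only once they are known to be mutually absolutely continuous, so you must first show that an \emph{arbitrary} equilibrium state satisfies the Gibbs inequality (Bowen's separate argument), hence is equivalent to $\nu_{\phi}$, before ergodicity can close the loop.
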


\begin{rmk}
The limit $\lim_{k \to \infty} \Var_{\nu_{t\phi}}(S_{k}\phi)/k$ always exists and is called the \textit{asymptotic variance} of $(S_{k}\phi)$ with respect to $\nu_{t\phi}$, \cite{Conformal_Erg}.
\end{rmk}

\subsection{A Haar basis for subshifts of finite type}\label{TheHaarBasis}

In what follows, we develop an essential notion which will be required in Section \ref{MainSection}.  This notion is that of a generalisation of a Haar basis for the middle third Cantor set to the setting of a one-sided topologically exact subshift of finite type.    

Throughout this section let $(\Sigma_{\mathpzc{A}}, \sigma)$ denote a one-sided, topologically exact subshift of finite type and let $\mu_{\phi}$ denote a Gibbs measure for a H\"older continuous potential $\phi \in C(\Sigma_{\mathpzc{A}}; \mathbb{R})$.  Further, we make the following definitions and fix the following notation.

\begin{enumerate}
\item For each $k \in \mathbb{N}$ and each $x \mathrel{:=} (x_{1}, x_{2}, \dots, x_{k}) \in \Sigma_{\mathpzc{A}}^{k}$, fix a bijection
\[
\theta_{x}: \{  y \in  \Sigma \, : \, a_{x_{k}, y} = 1 \} \to \{ 1, 2, \dots, \alpha(x) \}.
\]
Observe that for each $x \in \Sigma_{\mathpzc{A}}^{*}$, the function $\theta_{x}$ places an ordering on the ``children'' of $x$.
\item For each $k \in \mathbb{N}$ and each $x \in \Sigma_{\mathpzc{A}}^{k}$, define the weighted inner product $\langle \cdot, \cdot \rangle_{x} : \mathbb{R}^{\alpha(x)} \times \mathbb{R}^{\alpha(x)} \to \mathbb{R}$ by
\[
\langle (r_{1}, r_{2}, \dots, r_{\alpha(x)}), (s_{1}, s_{2}, \dots, s_{\alpha(x)}) \rangle_{x} \; \mathrel{:=} \; \sum_{j = 1}^{\alpha(x)} \mu_{\phi}([x \theta_{x}^{-1}(j)]) r_{j}s_{j}
\]
and observe that the set
\begin{eqnarray*}
&& \{ f_{x, j} \; \mathrel{:=} \; (\mu_{\phi}([x \theta_{x}^{-1}(j)]))^{-\nicefrac{1}{2}} \, \delta_{j,\alpha(x)}: j \in \{ 1, 2, \dots, \alpha(x) \} \}
\end{eqnarray*}
forms an orthonormal basis for $(\mathbb{R}^{\alpha(x)}, \langle \cdot, \cdot \rangle_{x})$, where $\delta_{j,k}$ denotes the $j$-th standard unit vector in $\mathbb{R}^{k}$ and where $[x \theta_{x}^{-1}(j)]$ denotes the cylinder set $[(x_{1}, x_{2}, \dots, x_{k}, \theta_{x}^{-1}(j)) ]$, for $j \in \{ 1, 2, \dots, k\}$.
\item For each $x \in \Sigma^{*}_{\mathpzc{A}}$, let $\Omega_{x}$ denote the set defined by
\begin{align*}
\Omega_{x} \; \mathrel{:=}\; \{ U: \mathbb{R}^{\alpha(x)} \to \mathbb{R}^{\alpha(x)} \; : \;& \text{is linear and has positive determinant,}\\
& \langle U(\mathrm{v}), U(\mathrm{u}) \rangle_{x} = \langle \mathrm{v}, \mathrm{u} \rangle_{x}\; \text{for all} \; \mathrm{v}, \mathrm{u} \in \mathbb{R}^{\alpha(x)}\\
& \text{and} \; U(f_{x, \alpha(x)}) = (\mu_{\phi}([x]))^{-\nicefrac{1}{2}} (1, 1,\dots, 1)\}
\}
\end{align*}
and fix a family $\left( U_{x} \right)_{x \in \Sigma^{*}_{\mathpzc{A}}}$, where $U_{x} \in \Omega_{x}$.
\item For each $(x, j) \in  \bigcup_{y \in \Sigma_{\mathpzc{A}}^{*}} \{ y \}  \times \{ 1, 2, \dots, \alpha(y) - 1\}$, define $e_{x, j}: \Sigma_{\mathpzc{A}} \to \mathbb{R}$ by
\[
e_{x, j} \; \mathrel{:=} \; \sum_{k = 1}^{\alpha(x)} \, (\mu_{\phi}([x (\theta_{x}^{-1}(k))]))^{-\nicefrac{1}{2}} \, \langle f_{x, k}, U_{x}(f_{x, j}) \rangle_{x} \, \chi_{{[x (\theta_{x}^{-1}(k))]}}.
\]
Here, for each $y \in \Sigma_{\mathpzc{A}}^{*}$, we let $\chi_{{[y]}}$ denote the characteristic function on the cylinder set $[y]$.
\end{enumerate}
Following convention, throughout this article, for each $a \in C(\Sigma_{\mathpzc{A}}; \mathbb{C})$, we will also let $a$ denote, where appropriate, the equivalence class
\[
\left\{ f: \Sigma_{\mathpzc{A}} \to \mathbb{C} \, : \, f \; \text{is a measurable function and} \; \int_{\Sigma_{\mathpzc{A}}} \lvert f -a \rvert \, d\mu_{\phi} \, = \, 0 \right\}
\]
of $L^{2}(\Sigma_{\mathpzc{A}}, \mathcal{B}, \mu_{\phi})$, where $\mathcal{B}$ denotes the Borel $\sigma$-algebra generated by $\mathcal{T}$.  We will also view, without loss of generality, the elements of  $L^{2}:=L^{2}(\Sigma_{\mathpzc{A}}, \mathcal{B}, \mu_{\phi})$ as complex-valued functions.

\begin{rmk}
For each $k \in \mathbb{N} \setminus \{ 1 \}$ and each $x \in \Sigma_{A}^{k}$ there exists a canonical choice for $U_{x}$.  We construct this canonical choice in the following manner.  Assume that $\mathbb{R}^{k}$ is equipped with the standard Euclidean inner product, for $k \in \{ 2, 3, \dots, l\}$.  We will now show how to construct a (canonical) sequence of linear transformations $( V_{k})_{k = 1}^{l}$, where $V_{k}: \mathbb{R}^{k} \to \mathbb{R}^{k}$ and such that each $V_{k}$ satisfies the following.
\begin{enumerate}
\item $V_{k}$ is orientation preserving.
\item For every $x,y \in \mathbb{R}^{k}$, we have that $\langle V_{k}(x), V_{k}(y) \rangle = \langle x, y \rangle$.
\item $V_{k}(0, 0, \dots, 0, 1) = n^{-1/2} (1, 1, \dots, 1, 1)$.
\end{enumerate}
For $k = 2$, one has only one choice for $V_{2}$, namely
\[
V_{2} \mathrel{:=} \left( \begin{array}{cc} 2^{-1/2} & 2^{-1/2} \\ -2^{-1/2} & 2^{-1/2} \end{array} \right).
\]
For $k > 2$, assume that $V_{k - 1} \mathrel{:=} [v_{i, j}]_{i, j} : \mathbb{R}^{k - 1} \to \mathbb{R}^{k - 1}$ has been given, then define
\begin{align*}
\widetilde{V}_{k} &\mathrel{:=} \;
\begin{pmatrix}
v_{1, 1} & \dots & v_{1, k - 1} &  0\\
v_{2, 1} & \dots & v_{2, k - 1} &  0\\
\vdots & \ddots & \vdots & \vdots\\
v_{k-1, 1} & \dots & v_{k-1, k - 1} &  0\\
0 & \dots & 0 & 1
\end{pmatrix}
\intertext{and}
O_{k} &\mathrel{:=}\;
\begin{pmatrix}
1 & \dots & 0 & 0 & 0\\
\vdots & \ddots & \vdots & \vdots & \vdots\\
0 & \dots & 1 & 0 & 0\\
0 & \dots & 0 & k^{-1/2} & (1 - 1/k)^{1/2}\\
0 & \dots & 0 & -(1 - 1/k)^{1/2} & k^{-1/2}\\
\end{pmatrix} \in \mathcal{O}(k),
\end{align*}
where $\mathcal{O}(k)$ denotes the orthogonal group of degree $k$ over $\mathbb{R}$.  We then define $V_{k}$ to be the matrix $\widetilde{V}_{k}O_{k}\widetilde{V}_{k}^{t}$.  This allows us to define a canonical choice for $U_{x}$, namely the linear transformation $S^{-1} V_{\alpha(x)} S$, where
\[
S \; \mathrel{:=} \; \text{diag}\left( \mu_{\phi}([x \theta_{x}^{-1}(1)])^{1/2}, \dots, \mu_{\phi}([x \theta_{x}^{-1}(k)])^{1/2} \right).
\]
\end{rmk}

\begin{thm}\label{Lemma-Generlised-Haar-Basis}
The set
\begin{equation}\label{ONBhaar-defn}
\left\{ e_{x, j} \, : \, (x, j) \in {\textstyle{\bigcup_{y \in \Sigma_{\mathpzc{A}}^{*}}} \{ y \}  \times \{ 1, 2, \dots, \alpha(y) - 1\}} \right\} \; {\cup} \; \left\{ (\mu_{\phi}([x]))^{-\nicefrac{1}{2}} \chi_{{[x]}} \, : \, x \in \Sigma \right\}
\end{equation}
forms an orthonormal basis for $L^{2}(\Sigma_{\mathpzc{A}}, \mathcal{B}, \mu_{\phi})$.
\end{thm}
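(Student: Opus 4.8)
The plan is to treat (\ref{ONBhaar-defn}) as a multiresolution (Haar-type) basis adapted to the filtration by word length, and to verify orthonormality and completeness by a telescoping argument. For each $n \in \mathbb{N}$ let $V_{n} \subseteq L^{2}$ denote the finite-dimensional subspace of functions that are constant on every cylinder set $[x]$ with $x \in \Sigma_{\mathpzc{A}}^{n}$; thus $V_{1} \subseteq V_{2} \subseteq \cdots$, and the functions $\chi_{[x]}$, $x \in \Sigma_{\mathpzc{A}}^{n}$, having pairwise disjoint supports, form an orthogonal basis of $V_{n}$. For each $x \in \Sigma_{\mathpzc{A}}^{*}$ let $W_{x} \subseteq L^{2}$ denote the span of $\{\chi_{[x\theta_{x}^{-1}(k)]} : k = 1, \dots, \alpha(x)\}$, that is, the functions supported on $[x]$ which are constant on each child cylinder of $[x]$, and let $W_{x}^{0} \subseteq W_{x}$ denote the codimension-one subspace of those $g \in W_{x}$ with $\int_{\Sigma_{\mathpzc{A}}} g \, d\mu_{\phi} = 0$. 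I will show that (\ref{ONBhaar-defn}) consists precisely of an orthonormal basis of $V_{1}$ together with orthonormal bases of each $W_{x}^{0}$, and that $L^{2} = V_{1} \oplus \bigoplus_{x \in \Sigma_{\mathpzc{A}}^{*}} W_{x}^{0}$ is an orthogonal decomposition.

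The essential local step is to transport the linear-algebraic data on $(\mathbb{R}^{\alpha(x)}, \langle \cdot, \cdot \rangle_{x})$ into $L^{2}$. Fix $x \in \Sigma_{\mathpzc{A}}^{*}$ and consider the linear map $\Phi_{x}: \mathbb{R}^{\alpha(x)} \to W_{x}$ sending $v = (v_{1}, \dots, v_{\alpha(x)})$ to $\sum_{k=1}^{\alpha(x)} v_{k} \, \chi_{[x\theta_{x}^{-1}(k)]}$. Since the child cylinders are disjoint and $\mu_{\phi}([x\theta_{x}^{-1}(k)]) > 0$, a direct computation of $\|\Phi_{x}(v)\|_{L^{2}}^{2}$ shows that $\Phi_{x}$ is an isometry from $(\mathbb{R}^{\alpha(x)}, \langle \cdot, \cdot \rangle_{x})$ onto $W_{x}$. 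Unwinding the definition of $e_{x,j}$, and noting that $\langle f_{x,k}, u \rangle_{x} = (\mu_{\phi}([x\theta_{x}^{-1}(k)]))^{1/2} u_{k}$ for the $k$-th coordinate $u_{k}$ of any $u \in \mathbb{R}^{\alpha(x)}$, the $k$-th coefficient of $e_{x,j}$ collapses to the $k$-th coordinate of $U_{x}(f_{x,j})$; hence $e_{x,j} = \Phi_{x}(U_{x}(f_{x,j}))$. Because $\{f_{x,j}\}_{j=1}^{\alpha(x)}$ is orthonormal in $(\mathbb{R}^{\alpha(x)}, \langle \cdot, \cdot \rangle_{x})$ and both $U_{x} \in \Omega_{x}$ and $\Phi_{x}$ are isometries, the family $\{e_{x,j}\}_{j=1}^{\alpha(x)}$ is an orthonormal basis of $W_{x}$; in particular $\langle e_{x,j}, e_{x,j'} \rangle_{L^{2}}$ equals $1$ if $j = j'$ and $0$ otherwise. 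Moreover, the defining property $U_{x}(f_{x,\alpha(x)}) = (\mu_{\phi}([x]))^{-1/2}(1,\dots,1)$ forces $e_{x,\alpha(x)} = (\mu_{\phi}([x]))^{-1/2}\chi_{[x]}$, so $e_{x,\alpha(x)}$ is the constant function on $[x]$ and the remaining vectors $\{e_{x,j}\}_{j=1}^{\alpha(x)-1}$ form an orthonormal basis of $W_{x}^{0}$, being orthogonal to the constant $e_{x,\alpha(x)}$ and hence mean-zero.

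It remains to glue these local pieces together and invoke completeness. For distinct $x, x' \in \Sigma_{\mathpzc{A}}^{n}$ of equal length the cylinders $[x], [x']$ are disjoint, so $W_{x}^{0} \perp W_{x'}^{0}$; and since every $g \in W_{x}^{0}$ is supported on $[x]$ with $\int g \, d\mu_{\phi} = 0$ while every element of $V_{n}$ is constant on $[x]$, we have $W_{x}^{0} \perp V_{n}$. Combining these with the observation that a function in $V_{n+1}$ restricts on each $[x]$ with $x \in \Sigma_{\mathpzc{A}}^{n}$ to an arbitrary element of $W_{x}$ yields the orthogonal decomposition $V_{n+1} = V_{n} \oplus \bigoplus_{x \in \Sigma_{\mathpzc{A}}^{n}} W_{x}^{0}$. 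Iterating from $V_{1}$ gives $V_{n+1} = V_{1} \oplus \bigoplus_{m=1}^{n}\bigoplus_{x \in \Sigma_{\mathpzc{A}}^{m}} W_{x}^{0}$, and in particular any two subspaces $W_{x}^{0}, W_{x'}^{0}$ with $x \neq x'$ are orthogonal (if their lengths differ, the shorter-word space lies in some $V_{|x'|}$ to which the longer-word space is orthogonal). Finally, since the cylinder sets generate $\mathcal{B}$ and have diameters tending to zero, the cylinder step functions $\bigcup_{n} V_{n}$ are dense in $L^{2}$; hence $L^{2} = V_{1} \oplus \bigoplus_{x \in \Sigma_{\mathpzc{A}}^{*}} W_{x}^{0}$. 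Assembling the orthonormal basis $\{(\mu_{\phi}([x]))^{-1/2}\chi_{[x]} : x \in \Sigma\}$ of $V_{1}$ with the orthonormal bases $\{e_{x,j}\}_{j=1}^{\alpha(x)-1}$ of the $W_{x}^{0}$ produces exactly the set (\ref{ONBhaar-defn}), completing the proof. The one genuinely delicate point is the local identification $e_{x,j} = \Phi_{x}(U_{x}(f_{x,j}))$ of the second paragraph: it is where the abstract isometry $U_{x}$ and the constant-vector normalisation encode, respectively, the $L^{2}$-orthonormality of the detail functions and the fact that the top vector reproduces $\chi_{[x]}$; everything else is the standard telescoping and density argument.
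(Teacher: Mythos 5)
Your proof is correct and rests on the same two pillars as the paper's own argument: a direct verification of orthonormality driven by the defining properties of $U_{x} \in \Omega_{x}$, and completeness via the density of the locally constant functions in $L^{2}(\Sigma_{\mathpzc{A}}, \mathcal{B}, \mu_{\phi})$. The difference is one of organization rather than substance. The paper computes $\lVert e_{x,j} \rVert_{2}$ by hand and then checks orthogonality by a case analysis on the relative position of the cylinders $[x]$ and $[y]$ (disjoint, or nested one way or the other), the key cancellation in the nested case being $\langle U_{x}(f_{x,\alpha(x)}), U_{x}(f_{x,j}) \rangle_{x} = 0$; two of the three cases are left to the reader. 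You package all of this into the isometry $\Phi_{x} : (\mathbb{R}^{\alpha(x)}, \langle \cdot, \cdot \rangle_{x}) \to W_{x}$ together with the martingale-type decomposition $V_{n+1} = V_{n} \oplus \bigoplus_{x \in \Sigma_{\mathpzc{A}}^{n}} W_{x}^{0}$: the identity $e_{x,j} = \Phi_{x}(U_{x}(f_{x,j}))$ makes orthonormality within each $W_{x}$ immediate, the identification $e_{x,\alpha(x)} = (\mu_{\phi}([x]))^{-1/2} \chi_{[x]}$ is exactly the integral computation underlying the paper's nested case, and the ladder $V_{1} \subseteq V_{2} \subseteq \cdots$ absorbs all cross-scale orthogonality without any case analysis. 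What your route buys is that every orthogonality relation follows at once from a single orthogonal decomposition, and the role of the normalisation $U_{x}(f_{x,\alpha(x)}) = (\mu_{\phi}([x]))^{-1/2}(1, \dots, 1)$ is made transparent; what the paper's route buys is brevity. The completeness step is identical in both: the span contains every $\chi_{[x]}$, and cylinder step functions are dense.
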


\begin{proof}
For each $x \in \Sigma$, we have that
\[
\lVert (\mu_{\phi}([x]))^{-\nicefrac{1}{2}} \chi_{{[x]}} \rVert^{2}_{{2}} \, = \, \int_{\Sigma_{\mathpzc{A}}} ( (\mu_{\phi}([x]))^{-\nicefrac{1}{2}}\chi_{{[x]}})^{2} d\mu_{\phi} \, = \, 1.
\]
For each $(x, j) \in  \bigcup_{y \in \Sigma_{\mathpzc{A}}^{*}} \{ y\} \times \{ 1, 2, \dots, \alpha(y) - 1\}$, we have that
\begin{eqnarray*}
\lVert e_{x, j} \rVert_{{2}}^{2} &=& \int_{\Sigma_{\mathpzc{A}}} \left( \sum_{k = 1}^{\alpha(x)} \, (\mu_{\phi}([x (\theta_{x}^{-1}(k))]))^{-\nicefrac{1}{2}} \, \langle f_{x, k}, U(f_{x, j}) \rangle_{x} \, \chi_{{[x (\theta_{x}^{-1}(k))]}} \right)^{2} d\mu_{\phi}\\
&=& \sum_{k = 1}^{\alpha(x)} \langle f_{x, k}, U_{x}(f_{x, j}) \rangle_{x}^{2}\\
&=& \lVert U_{x}(f_{x, j}) \rVert_{{\langle \cdot, \cdot \rangle_{{x}}}}^{2}\;= \; 1.
\end{eqnarray*}
In order to show that the set given in Equation (\ref{ONBhaar-defn}) is orthonormal we need to check the following cases.
\begin{enumerate}
\item For $x \in \Sigma$ and $(y, j) \in \bigcup_{u \in \Sigma_{\mathpzc{A}}^{*}} \{ u \}  \times \{ 1, 2, \dots, \alpha(u) - 1\}$ we need to show that $\langle (\mu_{\phi}([x]))^{-1} \chi_{{[x]}}, e_{y, j} \rangle_{{2}} = 0$.
\item  For $(x, j), (x, k) \in \bigcup_{y \in \Sigma_{\mathpzc{A}}^{*}} \{ y\} \times \{ 1, 2, \dots, \alpha(y) - 1\}$ with $j \not= k$ we need to show that $\langle e_{x, j}, e_{x, k} \rangle_{{2}} = 0$.
\item  For $(x, j), (y, k) \in  \bigcup_{u \in \Sigma_{\mathpzc{A}}^{*}} \{ u \} \times \{ 1, 2, \dots, \alpha(u) - 1\}$ with $x \not= y$ we need to show that $\langle e_{x, j}, e_{y, k} \rangle_{{2}} = 0$.
\end{enumerate}
Since the proofs of each of these cases are similar we provide a proof of Case $3$ and leave the other two case to the reader.  Suppose that $(x, j), (y, k) \in  \bigcup_{u \in \Sigma_{\mathpzc{A}}^{*}} \{ u \} \times \{ 1, 2, \dots, \alpha(u) - 1\}$ with $x \not= y$, then we have the following cases.
\begin{enumerate}
\item[(a)] If  $[x] \cap[y]=\varnothing$, then it is clear that $\langle e_{x, j}, e_{y, k} \rangle_{{2}} = 0$.
\item[(b)] If $[x] \subset [y]$, then there exists a constant $C \in \mathbb{R}$ such that
\begin{multline*}
\langle e_{x, j}, e_{y, k} \rangle_{{2}}\\
\begin{split}
= \;\;\;& C \, \int_{[y]} \sum_{m = 1}^{\alpha(x)} (\mu_{\phi}([x (\theta_{x}^{-1}(m))]) )^{-\nicefrac{1}{2}} \langle f_{x, m}, U_{x}(f_{x, j}) \rangle_{x} \chi_{{[x (\theta_{x}^{-1}(m))]}} \, d \mu_{\phi}\\
=\;\;\;& C \sum_{m = 1}^{\alpha(x)}  (\mu_{\phi}([x (\theta_{x}^{-1}(m))]) )^{\nicefrac{1}{2}} \langle f_{x, m}, U_{x}(f_{x,j}) \rangle_{x}\\
=\;\;\;& C (\mu([x]))^{\nicefrac{1}{2}} \langle U_{x}(f_{x, \alpha(x)}), U_{x}(f_{x, j}) \rangle_{x} \; = \; 0.
\end{split}
\end{multline*}
\item[(c)] If $[y] \subset [x]$, then using a symmetric argument to that given in Part $\mathrm{(b)}$, we have that $\langle e_{x, j}, e_{y, k} \rangle_{{2}} = 0$.
\end{enumerate}
By construction, every characteristic function of a cylinder set can be generated by a finite sum of elements of the set
\[
\left\{ e_{x, j} \, : \, (x, j) \in  {\textstyle{\bigcup}_{y \in \Sigma_{\mathpzc{A}}^{*}} \{ y \} \times \{ 1, 2, \dots, \alpha(y) - 1\}} \right\} \; {\cup} \; \left\{ (\mu_{\phi}([x]))^{-\nicefrac{1}{2}} \chi_{{[x]}} \, : \, x \in \Sigma \right\}.
\]
Therefore, the complex linear span of the above set contains the set of locally constant functions with finite range, which is $L^{2}$-norm dense in $L^{2}(\Sigma_{\mathpzc{A}}, \mathcal{B}, \mu_{\phi})$.  Hence, the result follows.
\end{proof}

\begin{defn}
We refer to the basis given in Theorem \ref{Lemma-Generlised-Haar-Basis} as a \textit{Haar basis for the measure space $(\Sigma_{\mathpzc{A}}, \mu_{\phi})$}.
\end{defn}

\subsection{Renewal Theorems in Symbolic Dynamics}

When investigating the noncommutative integration theory for our spectral metric space representations of Gibbs measures (Theorem \ref{Subshiftoffinitetypethmspectraltripleandotherintundmetricdim}) we will make use of counting results from symbolic dynamics.  These counting results are given by two different renewal theorems (Theorem \ref{RenewalthmLally1} and Theorem \ref{renewalthm}).

For stating the necessary results, we fix the following notation.  Let $\mu_{\phi}$ denote a Gibbs measure for a H\"older continuous potential function $\phi \in C(\Sigma_{\mathpzc{A}}; \mathbb{R})$.  For a finite word $x \in \Sigma_{\mathpzc{A}}^{*} \cup \{\emptyset\}$, we are interested in the asymptotic behaviour, as $r$ tends to zero, of the sums
 \begin{align}
\Upsilon_{x}(r) &\mathrel{:=} \; \sum_{\substack{y \in \Sigma_{\mathpzc{A}}^{*} \, \text{with} \\ \mu_{\phi}([y]) > r \; \text{and} \; [y] \subseteq [x]}} \hspace{-4mm} 1 \label{summandrewalLandF}
\intertext{and}
\Xi_{x}(r) &\mathrel{:=} \; \sum_{\substack{y \in \Sigma_{\mathpzc{A}}^{*} \, \text{with} \\ \mu_{\phi}([y]) > r \; \text{and} \; [y] \subseteq [x]}} \hspace{-4mm} \mu_{\phi}([y]).\label{summandrewalLandF1}
\end{align}
Here, when $x = \emptyset$, we set $[x] \mathrel{:=} \Sigma_{\mathpzc{A}}$.  

\begin{defn}
 For $f, g: \mathbb{R} \to \mathbb{R}$ and $x_{0}$ belonging to the extended real numbers, we say that $f$ is \textit{asymptotic} to $g$ as $t$ tends to $t_{0}$ (and in the case that $t_{0} = \pm\infty$, for all $t$ sufficiently large, respectively sufficiently small) if $\lim_{\, t \to t_{0}} \, f(t) / g(t) \, = \, 1$.  We write $f \sim g$ as $t$ tends to $t_{0}$.
\end{defn}

\begin{thm}\label{RenewalthmLally1}\textnormal{(Lalley \cite{Lalley})}
Let $(\Sigma_{\mathpzc{A}}, \sigma)$ denote a one-sided, topologically exact subshift of finite type and let $\phi_{1} \in C(\Sigma_{\mathpzc{A}})$ denote a H\"older continuous potential function with zero pressure.  Further, let $\phi_{2} \in C(\Sigma_{\mathpzc{A}})$ denote a non-negative but not identically zero H\"older continuous function.  Then there exists a strictly positive continuous function $C: \Sigma_{\mathpzc{A}} \to \Sigma_{\mathpzc{A}}$ such that, 
\[
 \sum_{k \in \mathbb{N}_{0}} \, \sum_{\upsilon \in \sigma^{-k}(\omega)} \phi_{2}(\upsilon) \, \chi_{{(-\infty, r]}} (S_{k}\phi_{1}(\upsilon)) \, \asymp \,  C(\omega) e^{r}.
\]
as $r \to \infty$, uniformly for $\omega \in \Sigma_{\mathpzc{A}}$.
\end{thm}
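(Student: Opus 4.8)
The plan is to reduce the renewal sum to the resolvent of a parametrised Perron--Frobenius--Ruelle operator and then read off the asymptotics by a Tauberian argument. Writing $N(\omega, r)$ for the left-hand side, I would regard $N(\omega, \cdot)$ as a non-decreasing right-continuous function of $r$ whose jump at the point $r = S_{k}\phi_{1}(\upsilon)$ equals $\phi_{2}(\upsilon)$. Its Laplace--Stieltjes transform is, for $\mathrm{Re}(s)$ sufficiently large,
\[
\widehat{N}(\omega, s) \; = \; \int_{\mathbb{R}} e^{-sr}\, dN(\omega, r) \; = \; \sum_{k \in \mathbb{N}_{0}} \sum_{\upsilon \in \sigma^{-k}(\omega)} \phi_{2}(\upsilon)\, e^{-s S_{k}\phi_{1}(\upsilon)} \; = \; \sum_{k \in \mathbb{N}_{0}} \mathcal{L}_{-s\phi_{1}}^{k}(\phi_{2})(\omega),
\]
since iterating the definition of $\mathcal{L}_{-s\phi_{1}}$ gives $\mathcal{L}_{-s\phi_{1}}^{k}(\phi_{2})(\omega) = \sum_{\upsilon \in \sigma^{-k}(\omega)} e^{-s S_{k}\phi_{1}(\upsilon)} \phi_{2}(\upsilon)$. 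Thus $\widehat{N}(\omega, s) = (\mathrm{Id} - \mathcal{L}_{-s\phi_{1}})^{-1}(\phi_{2})(\omega)$ wherever the Neumann series converges, and the whole problem becomes the meromorphic continuation of this resolvent in $s$.

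Next I would invoke the spectral theory of Theorem \ref{PFR-E-M-Uniq-thm} for the H\"older potential $-s\phi_{1}$. On the Banach space of H\"older functions, $\mathcal{L}_{-s\phi_{1}}$ has a simple, isolated leading eigenvalue $\lambda(s) = e^{P(-s\phi_{1}, \sigma)}$ with a spectral gap, a strictly positive H\"older eigenfunction $h_{s}$, and a spectral projection $\Pi_{s}$; hence $(\mathrm{Id} - \mathcal{L}_{-s\phi_{1}})^{-1} = (1 - \lambda(s))^{-1}\Pi_{s} + R(s)$ with $R(s)$ holomorphic and uniformly bounded near the critical line. By the real-analyticity and strict convexity of $p(t) = P(t\phi_{1})$ recorded in Theorem \ref{PFR-E-M-Uniq-thm}(7), the function $s \mapsto \lambda(s)$ is strictly monotone, and the zero-pressure normalisation of $\phi_{1}$ is exactly the condition placing the unique real root of $\lambda(s) = 1$ at $s = 1$. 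Consequently $\widehat{N}(\omega, s)$ extends meromorphically past $\mathrm{Re}(s) = 1$ with a single simple pole there, of residue $-\lambda'(1)^{-1}\,\Pi_{1}(\phi_{2})(\omega)$. Since $\Pi_{1}$ projects onto the span of the strictly positive eigenfunction and, because $\phi_{2}$ is non-negative and not identically zero, pairs it against the strictly positive eigenmeasure to a positive scalar, this residue is a strictly positive H\"older---hence continuous---function of $\omega$, which will furnish the constant $C(\omega)$.

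The analytic core, and the step I expect to be the main obstacle, is to show that $s = 1$ is the only singularity on the line $\mathrm{Re}(s) = 1$ and to obtain decay of $\widehat{N}(\omega, 1 + it)$ as $\lvert t \rvert \to \infty$. This is the non-lattice (aperiodicity) requirement: one must rule out $\lambda(1 + it) = 1$ for $t \neq 0$, equivalently that $\phi_{1}$ is not cohomologous to a function valued in a discrete subgroup of $\mathbb{R}$. For a H\"older potential over a topologically exact subshift this is precisely the content of a Ruelle--Dolgopyat type estimate showing that the spectral radius of $\mathcal{L}_{-(1+it)\phi_{1}}$ is strictly less than one for every $t \neq 0$, with quantitative control uniform in $\omega$. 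I regard securing this vertical-line estimate as the genuinely hard analysis, the remainder being bookkeeping.

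Granting the line estimate, a Wiener--Ikehara Tauberian theorem applied to the non-decreasing function $N(\omega, \cdot)$ converts the simple pole at $s = 1$ into the asymptotic $N(\omega, r) \sim C(\omega)\, e^{r}$ with $C(\omega) = -\lambda'(1)^{-1}\,\Pi_{1}(\phi_{2})(\omega)$, matching the claimed form. Finally, the uniformity in $\omega$ follows because the eigenprojection $\Pi_{1}$, the residue, and the remainder $R(s)$ are all controlled in the supremum norm uniformly over $\omega$ through the uniform spectral gap; thus the Tauberian estimate can be run with constants independent of $\omega$, giving the asserted uniform equivalence.
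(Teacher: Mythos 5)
The paper offers no proof of this statement: it is quoted from Lalley's Acta paper \cite{Lalley}, so there is no internal argument to compare against. Judged on its own terms, your blueprint --- rewrite the renewal sum as the resolvent $(\mathrm{Id}-\mathcal{L}_{s\phi_{1}})^{-1}(\phi_{2})$ of the parametrised transfer operator, locate a simple pole where the pressure vanishes, and convert it into asymptotics by a Tauberian theorem --- is exactly the architecture of Lalley's proof, and your identification of the residue as a strictly positive multiple of the leading eigenprojection applied to $\phi_{2}$ is correct.

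The genuine gap is the step you yourself single out as the hard analysis: ruling out singularities at $1+it$, $t\neq 0$. That step is false under the stated hypotheses. The theorem assumes only that $\phi_{1}$ is H\"older with zero pressure; if $\phi_{1}$ is cohomologous to a function taking values in $a\mathbb{Z}$ (a lattice potential, which certainly occurs for subshifts of finite type, e.g.\ for locally constant potentials), then $\mathcal{L}_{(1+it)\phi_{1}}$ has $1$ in its spectrum for every $t\in(2\pi/a)\mathbb{Z}$, the resolvent has infinitely many poles on the critical line, and the Wiener--Ikehara conclusion $N(\omega,r)\sim C(\omega)e^{r}$ genuinely fails --- the lattice asymptotic carries bounded periodic oscillations. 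No Dolgopyat-type estimate can rescue this (such estimates are in any case not generally available over subshifts of finite type); the obstruction is cohomological, not technical. The statement survives only because it asserts $\asymp$ rather than $\sim$: Lalley treats the lattice case separately by a discrete renewal argument, and the two-sided multiplicative bound holds in both cases, whereas your argument as written delivers only the non-lattice case. Two smaller points. First, the indicator in the statement should read $\chi_{(-\infty,r]}(-S_{k}\phi_{1}(\upsilon))$ (compare Theorem \ref{renewalthm}); since $\phi_{1}$ is cohomologous to a strictly negative function, the sum as literally printed is infinite, and your transform inherits this sign confusion --- with the corrected sign one works with $\mathcal{L}_{s\phi_{1}}$ and the real root of $\lambda(s)=1$ sits at $s=1$ because $P(\phi_{1},\sigma)=0$, whereas with $\mathcal{L}_{-s\phi_{1}}$ it sits at $s=-1$. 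Second, convexity of $t\mapsto P(t\phi_{1},\sigma)$ need not be strict without an additional non-cohomology-to-constants assumption, though uniqueness of the real root still follows from $P(0)>0$ and monotonicity past $t=1$.
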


\begin{defn}
 For $f, g: \mathbb{R} \to [0, \infty)$ and $t_{0}$ belonging to the extended real numbers, we say that $f$ is \textit{comparable} to $g$ as $t$ tends to $t_{0}$ if there exist constants $c_{1}, c_{2} > 0$ such that for all $t$ sufficiently close to $t_{0}$ (and in the case that $t_{0} = \pm\infty$, for all $t$ sufficiently large, respectively sufficiently small), we have that $c_{{1}} f(x) \leq g(x) \leq c_{{2}} f(x)$. We write $f \asymp g$ as $t$ tends to $t_{0}$
\end{defn}

\begin{cor}\label{Cor1}
Under the conditions of Theorem \ref{RenewalthmLally1}, for each $x \in \Sigma_{\mathpzc{A}}^{*} \cup \{\emptyset\}$, we have that $\Upsilon_{x}(t) \asymp t^{-1}$ as $t$ tends to zero from above.
\end{cor}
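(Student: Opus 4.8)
The plan is to read the corollary straight off Lalley's renewal theorem (Theorem \ref{RenewalthmLally1}) by choosing its two potentials $\phi_{1}, \phi_{2}$ so that the renewal sum becomes exactly the counting function $\Upsilon_{x}$ of (\ref{summandrewalLandF}). First I would pass from the multiplicative scale $t \to 0^{+}$ to the additive scale $r \to \infty$ through the substitution $r \mathrel{:=} \ln(1/t)$, so that the rate $e^{r}$ delivered by Theorem \ref{RenewalthmLally1} becomes $t^{-1}$, matching the asserted $\Upsilon_{x}(t) \asymp t^{-1}$. Since the conclusion only demands the two-sided comparability $\asymp$, I have room throughout to absorb bounded multiplicative errors into shifts of the threshold $r$.

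The heart of the argument is translating the measure condition $\mu_{\phi}([y]) > t$ into a Birkhoff-sum condition of the form $S_{k}\phi_{1} \leq r$. By the Gibbs property (Equation (\ref{GibbPropertyMeasureDefn})), for any $\omega \in [y]$ with $\lvert y \rvert = k$ one has $\mu_{\phi}([y]) \asymp e^{S_{k}\phi(\omega) - kP(\phi, \sigma)}$ with the constant $c$ uniform in $y$ and $\omega$. Hence, taking $\phi_{1}$ to be the Hölder potential obtained by normalising $P(\phi,\sigma) - \phi$ so that it satisfies the hypotheses of Theorem \ref{RenewalthmLally1} and $S_{k}\phi_{1}(\omega) = -\ln \mu_{\phi}([y]) + O(1)$, the inequality $\mu_{\phi}([y]) > t$ becomes equivalent, after absorbing the Gibbs constant into a shift $r \mapsto r \pm \ln c$ of the threshold, to $S_{k}\phi_{1}(\omega) \leq r$. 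Hölder continuity ensures that $S_{k}\phi_{1}$ evaluated at the particular preimage point occurring in the renewal sum differs from its value on the whole cylinder $[y]$ by only a bounded amount, so the substitution is faithful.

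With this dictionary in place I would set $\phi_{2} \mathrel{:=} \chi_{[x]}$, which is continuous (cylinder sets are clopen), non-negative and not identically zero, as required. For a fixed base point $\omega$, the terms of the renewal sum $\sum_{k} \sum_{\upsilon \in \sigma^{-k}(\omega)} \chi_{[x]}(\upsilon)\, \chi_{(-\infty, r]}(S_{k}\phi_{1}(\upsilon))$ then run precisely over the finite words $y$ with $[y] \subseteq [x]$ (the factor $\chi_{[x]}$ forcing $y$ to begin with $x$) that can precede $\omega$ and satisfy $\mu_{\phi}([y]) > t$, each counted with weight one. Sandwiching this count between the renewal sums at the thresholds $r - \ln c$ and $r + \ln c$, each of which is $\asymp C(\omega) e^{r}$ by Theorem \ref{RenewalthmLally1}, gives the estimate $\asymp t^{-1}$ for the words preceding $\omega$. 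To remove the dependence on $\omega$, note that a word $y$ is counted for $\omega$ only when its final symbol is admissible before $\omega_{1}$; irreducibility of $\mathpzc{A}$ lets me choose a finite family of base points whose initial symbols exhaust the admissible continuations, and since finitely many quantities each $\asymp e^{r}$ sum to something $\asymp e^{r}$, the base-point-free bound $\Upsilon_{x}(t) \asymp t^{-1}$ follows.

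I expect the main obstacle to be the faithful matching of the two thresholds, namely turning the event $\{ \mu_{\phi}([y]) > t \}$ into $\{ S_{k}\phi_{1} \leq r \}$ while keeping the prepend-direction of the renewal sum (since $\sigma^{-k}(\omega)$ prepends words to $\omega$) consistent with the extension-direction implicit in $[y] \subseteq [x]$, all without degrading the sharp exponential rate $e^{r}$. The slack of working with $\asymp$ rather than $\sim$ is exactly what allows the Gibbs constant $c$, the Hölder distortion error, and the finite base-point multiplicity to be absorbed without affecting the final comparison.
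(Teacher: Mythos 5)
Your proposal is essentially the paper's own proof, which simply sets $\phi_{1} := \phi$ (normalised to zero pressure) and $\phi_{2} := \chi_{[x]}$ and invokes Theorem \ref{RenewalthmLally1}; your version correctly fills in the dictionary between $\mu_{\phi}([y]) > t$ and the Birkhoff-sum threshold via the Gibbs property, the change of scale $r = \ln(1/t)$, and the removal of the base-point dependence. The only wrinkle is your description of $\phi_{1}$: a potential with $S_{k}\phi_{1} = -\ln\mu_{\phi}([y]) + O(1)$ is $P(\phi,\sigma) - \phi$ up to coboundary and does not itself have zero pressure, but this is an artefact of the sign convention in the paper's statement of the renewal theorem (compare Theorem \ref{renewalthm}, where the indicator is applied to $-S_{k}\phi$), and the comparison $\Upsilon_{x}(t) \asymp t^{-1}$ is unaffected.
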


\begin{proof}
Setting $\phi_{1} \mathrel{:=} \phi$ and $\phi_{2} \mathrel{:=} \chi_{{[x]}}$, the result follows from Theorem \ref{RenewalthmLally1}.
\end{proof}

 For the next theorem we will make use of a result by Krengel, R\"ottger and Wacker.

\begin{lem}\textnormal{(Krengel-R\"ottger-Wacker \cite{Krengel})}\label{KRW}
Let $Y_{1}, Y_{2}, \dots$ denote a stationary sequence of non-negative random variables with $\mathbb{E}(1/Y_{1}) < \infty$ and let $N_{t} \mathrel{:=} \sup \{ k \in \mathbb{N} : Y_{1} + Y_{2} + \dots + Y_{k} \leq t \}$.  Then $N_{t}/t$ converges in $L^{1}$-norm to $1/\lim_{k \rightarrow \infty} k^{-1}(Y_{1} + Y_{2} + \dots + Y_{k})$.
\end{lem}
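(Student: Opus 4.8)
The plan is to prove the convergence first in the almost-sure sense and then to upgrade it to $L^{1}$ by establishing uniform integrability, the latter being where the hypothesis $\mathbb{E}(1/Y_{1}) < \infty$ does its work. Throughout, write $S_{k} := Y_{1} + \cdots + Y_{k}$, let $\mathcal{I}$ denote the shift-invariant $\sigma$-algebra of the stationary sequence $(Y_{k})$, and set $\bar{Y} := \lim_{k \to \infty} k^{-1} S_{k}$, which exists almost surely in $[0, \infty]$ by Birkhoff's ergodic theorem (no ergodicity is assumed, so $\bar{Y}$ equals the conditional expectation $\mathbb{E}(Y_{1} \mid \mathcal{I})$). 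Since $\mathbb{E}(1/Y_{1}) < \infty$ forces $Y_{1} > 0$ almost surely, and since the harmonic-arithmetic mean inequality gives $\big(k^{-1} S_{k}\big)\big(k^{-1}\sum_{j=1}^{k} Y_{j}^{-1}\big) \geq 1$, the almost-sure finiteness of $\lim_{k} k^{-1}\sum_{j \leq k} Y_{j}^{-1} = \mathbb{E}(Y_{1}^{-1} \mid \mathcal{I})$ (again by Birkhoff) rules out $\bar{Y} = 0$; hence $\bar{Y} \in (0, \infty]$ almost surely and $1/\bar{Y}$ is well defined under the convention $1/\infty = 0$.

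For the almost-sure convergence I would use the defining sandwich $S_{N_{t}} \leq t < S_{N_{t} + 1}$ together with the elementary identity $\{ N_{t} \geq k \} = \{ S_{k} \leq t \}$. On the event $\{ \bar{Y} < \infty \}$ one has $N_{t} \to \infty$ as $t \to \infty$, so dividing the sandwich by $N_{t}$ yields $t/N_{t} \to \bar{Y}$, that is $N_{t}/t \to 1/\bar{Y}$; on $\{ \bar{Y} = \infty \}$ the same sandwich forces $N_{t}/t \to 0 = 1/\bar{Y}$. This step is routine.

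The substantive part is uniform integrability of the family $\{ N_{t}/t : t \geq 1 \}$, which combined with the almost-sure convergence yields convergence in $L^{1}$ to the same limit $1/\bar{Y}$. Here I would exploit two consequences of the moment assumption. First, Cauchy-Schwarz applied to $Y_{1}, \dots, Y_{N_{t}}$ gives $N_{t}^{2} \leq S_{N_{t}} \sum_{k=1}^{N_{t}} Y_{k}^{-1} \leq t \sum_{k=1}^{N_{t}} Y_{k}^{-1}$, whence $N_{t}/t \leq A_{N_{t}}$, where $A_{n} := n^{-1}\sum_{k=1}^{n} Y_{k}^{-1}$. Second, since $Y_{1}^{-1} \in L^{1}$, the mean ergodic theorem gives $A_{n} \to \mathbb{E}(Y_{1}^{-1}\mid\mathcal{I})$ in $L^{1}$, so the single-index family $\{ A_{n} : n \geq 1 \}$ is uniformly integrable. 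Combined with the tail containment $\{ N_{t} > st \} \subseteq \{ S_{\lceil st \rceil} \leq t \} \subseteq \{ A_{\lceil st \rceil} \geq s \}$ (the last inclusion by the harmonic-arithmetic mean inequality, using $\lceil st\rceil \geq st$), one controls $\mathbb{P}(N_{t}/t > s)$ and thereby $\mathbb{E}\big[(N_{t}/t)\,\chi_{\{ N_{t}/t > M \}}\big]$ uniformly in $t \geq 1$ as $M \to \infty$.

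The main obstacle I anticipate is making this final estimate genuinely uniform in $t$: in the containment $\{ N_{t}/t > s \} \subseteq \{ A_{\lceil st \rceil} \geq s \}$ the index $\lceil st \rceil$ at which the ergodic average is evaluated depends on both $s$ and $t$, so the uniform integrability of $\{ A_{n} \}$ does not transfer to the time-changed family $\{ A_{N_{t}} \}$ for free. Probabilistically, this is the assertion that a large value of $N_{t}/t$ is driven by atypically short blocks in which the increments $Y_{k}$ are uniformly small, and it is precisely the integrability of $Y_{1}^{-1}$ that prevents such blocks from carrying mass. Quantifying this—for instance by a maximal-inequality argument organised over the dyadic scales $s \in [2^{m}, 2^{m+1})$, or by bounding $\sup_{t \geq 1} \mathbb{E}\big[(N_{t}/t)^{2}\big]$ directly through the renewal-type sum $\mathbb{E}\big[\sum_{k \leq N_{t}} Y_{k}^{-1}\big] = \sum_{k \geq 1} \mathbb{E}\big[Y_{k}^{-1}\chi_{\{ S_{k} \leq t \}}\big]$—is the technical heart of the argument.
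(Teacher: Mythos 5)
The paper does not prove this lemma at all: it is quoted verbatim from Krengel--R\"ottger--Wacker \cite{Krengel} as an external black box, so there is no in-paper argument to compare yours against. Judged on its own terms, your proposal is a sketch with a genuine gap, and it is the gap you yourself flag: the almost-sure convergence via the sandwich $S_{N_t}\leq t< S_{N_t+1}$ is routine and correct (including the observation that $\mathbb{E}(1/Y_1)<\infty$ forces $\bar Y>0$ a.s.\ via the harmonic--arithmetic mean inequality), but the $L^1$ upgrade --- which is the entire content of the cited theorem, and the only part the paper actually uses --- is not carried out. You reduce it to uniform integrability of $\{N_t/t\}$ and propose to dominate $N_t/t$ by $A_{N_t}$, the ergodic average of $Y^{-1}$ evaluated at the random index $N_t$; but as you note, $L^1$-convergence (hence uniform integrability) of the deterministic-index family $\{A_n\}$ does not transfer to a randomly indexed family, since $\mathbb{E}[A_{N_t}\chi_{\{A_{N_t}>M\}}]=\sum_n\mathbb{E}[A_n\chi_{\{A_n>M\}}\chi_{\{N_t=n\}}]$ is a sum over $n$, not a supremum.

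Moreover, the obvious repair --- dominating $A_{N_t}$ by the maximal function $A^{*}:=\sup_n A_n$ and invoking dominated convergence --- is unavailable here: under the stated hypothesis $Y_1^{-1}\in L^1$ only, the maximal ergodic function of an $L^1$ function is in general merely in weak $L^1$ (integrability of $A^{*}$ would require $Y_1^{-1}\in L\log L$, by Ornstein's counterexample to the $L^1$ dominated ergodic theorem). So the route you set up runs head-on into a known obstruction, and the two escape hatches you mention (a dyadic maximal-inequality argument, or bounding $\sup_t\mathbb{E}[(N_t/t)^2]$ through $\sum_{k}\mathbb{E}[Y_k^{-1}\chi_{\{S_k\leq t\}}]$) are named but not executed; the second is delicate in the stationary non-i.i.d.\ setting because $\{S_k\leq t\}$ is not independent of $Y_k^{-1}$ and no Wald-type identity applies. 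Your tail bound $\mathbb{P}(N_t/t>s)\leq\mathbb{P}(A_{\lceil st\rceil}\geq s)\leq s^{-1}\mathbb{E}(Y_1^{-1})$ is correct but only of order $1/s$, which is not integrable at infinity and hence insufficient for uniform integrability. In short: the skeleton is sensible and the difficulty is correctly located, but the theorem is not proved; to complete it you would need to reproduce the actual argument of \cite{Krengel} (or an equivalent one), which is precisely the nontrivial step you have deferred.
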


\begin{thm}\label{renewalthm}
Let $(\Sigma_{\mathpzc{A}}, \sigma)$ denote a one-sided, topologically exact subshift of finite type and let $\nu_{\phi} \in \mathcal{M}_{\sigma}$ denote an equilibrium measure for a H\"older continuous potential function $\phi \in C(\Sigma_{\mathpzc{A}}; \mathbb{R})$ with zero pressure. Then for each $x \in \Sigma_{\mathpzc{A}}^{*} \cup \{ \emptyset \}$, as $r$ tends to infinity,
\[
\int_{[x]} \sum_{k \in \mathbb{N}_{0}} \chi_{[0, r]} (-S_{k}\phi(\omega)) \, d\nu_{\phi}(\omega) \, \sim \, \frac{r \nu_{\phi}([x])}{h_{\nu_{\phi}}(\sigma)}.
\]
\end{thm}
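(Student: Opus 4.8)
The plan is to recognise the integrand $\sum_{k\in\mathbb{N}_{0}}\chi_{[0,r]}(-S_{k}\phi(\omega))$ as a renewal counting function for the Birkhoff sums of $-\phi$ and then to combine the Krengel--R\"ottger--Wacker result (Lemma~\ref{KRW}) with Birkhoff's ergodic theorem. Since $\nu_{\phi}$ is the equilibrium measure of a H\"older potential with zero pressure, the variational principle (\ref{variationPrinciple}) gives $\int_{\Sigma_{\mathpzc{A}}}\phi\,d\nu_{\phi}=-h_{\nu_{\phi}}(\sigma)<0$, so the increments $(-\phi)\circ\sigma^{m}$ have strictly positive mean $h_{\nu_{\phi}}(\sigma)$ under $\nu_{\phi}$. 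Heuristically the partial sums $-S_{k}\phi$ drift to $+\infty$ at rate $h_{\nu_{\phi}}(\sigma)$, so the number of indices $k$ with $-S_{k}\phi(\omega)\in[0,r]$ should be $\sim r/h_{\nu_{\phi}}(\sigma)$; integrating this pointwise statement over $[x]$ is what yields the claimed asymptotic, provided the convergence is strong enough (in $L^{1}$) to pass the limit under the integral.

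To put Lemma~\ref{KRW} into force I first replace $\phi$ by a cohomologous potential with non-negative increments. Using the strictly positive eigenfunction $h_{\phi}$ of $\mathcal{L}_{\phi}$ from Theorem~\ref{PFR-E-M-Uniq-thm}(4) (with $P(\phi,\sigma)=0$ one has $\mathcal{L}_{\phi}h_{\phi}=h_{\phi}$), set $\tilde\phi\mathrel{:=}\phi+\ln h_{\phi}-\ln(h_{\phi}\circ\sigma)$. Then $\tilde\phi$ is H\"older, is cohomologous to $\phi$ (hence has the same equilibrium measure $\nu_{\phi}$ and the same pressure by Theorem~\ref{PFR-E-M-Uniq-thm}(3),(5),(6)), and satisfies $\sum_{\upsilon\in\sigma^{-1}(\omega)}e^{\tilde\phi(\upsilon)}=\mathcal{L}_{\phi}(h_{\phi})(\omega)/h_{\phi}(\omega)=1$, so $\tilde\phi\leq 0$. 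Because the coboundary telescopes, $\lvert S_{k}\phi-S_{k}\tilde\phi\rvert\leq 2\lVert\ln h_{\phi}\rVert_{\infty}=:c$ for all $k$ and all $\omega$, whence
\[
\sum_{k}\chi_{[c,\,r-c]}(-S_{k}\tilde\phi)\;\leq\;\sum_{k}\chi_{[0,r]}(-S_{k}\phi)\;\leq\;\sum_{k}\chi_{[-c,\,r+c]}(-S_{k}\tilde\phi).
\]
It therefore suffices to establish the asymptotic for $\tilde\phi$, whose increments $Y_{m}\mathrel{:=}(-\tilde\phi)\circ\sigma^{m-1}\geq 0$ are non-negative and, by $\sigma$-invariance of $\nu_{\phi}$, stationary.

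With this reduction the partial sums $Y_{1}+\cdots+Y_{k}=-S_{k}\tilde\phi$ are non-decreasing, so $\sum_{k\in\mathbb{N}_{0}}\chi_{[0,r]}(-S_{k}\tilde\phi)=1+N_{r}$, where $N_{r}=\sup\{k\in\mathbb{N}:Y_{1}+\cdots+Y_{k}\leq r\}$ is exactly the variable of Lemma~\ref{KRW}. Birkhoff's theorem (the Gibbs measure $\nu_{\phi}$ is ergodic, indeed mixing, since the shift is topologically exact) identifies the limiting mean as $\lim_{k\to\infty}k^{-1}\sum_{m=1}^{k}Y_{m}=\int_{\Sigma_{\mathpzc{A}}}(-\tilde\phi)\,d\nu_{\phi}=h_{\nu_{\phi}}(\sigma)$ $\nu_{\phi}$-almost everywhere, and Lemma~\ref{KRW} upgrades this to $N_{r}/r\to 1/h_{\nu_{\phi}}(\sigma)$ in $L^{1}(\nu_{\phi})$. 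Integrating over $[x]$ gives $\int_{[x]}N_{r}/r\,d\nu_{\phi}\to\nu_{\phi}([x])/h_{\nu_{\phi}}(\sigma)$, while the additive ``$+1$'' contributes only the lower-order term $\nu_{\phi}([x])$ and the endpoint counts over the $O(1)$-windows near $c$ are negligible after division by $r$; the squeeze above then delivers $\int_{[x]}\sum_{k}\chi_{[0,r]}(-S_{k}\phi)\,d\nu_{\phi}\sim r\nu_{\phi}([x])/h_{\nu_{\phi}}(\sigma)$.

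The step I expect to be the main obstacle is verifying the hypothesis $\mathbb{E}(1/Y_{1})=\int 1/(-\tilde\phi)\,d\nu_{\phi}<\infty$ of Lemma~\ref{KRW}, which is precisely what secures the upgrade from almost-everywhere to $L^{1}$ convergence. As $\tilde\phi\leq 0$ is continuous on the compact space $\Sigma_{\mathpzc{A}}$, this amounts to controlling the set $\{\tilde\phi=0\}$, which consists of those $\upsilon$ whose image $\sigma(\upsilon)$ has a unique $\sigma$-preimage, i.e. $\upsilon_{2}$ has in-degree one in $\mathpzc{A}$. If every symbol has in-degree at least two, then $\tilde\phi$ is strictly negative and, by compactness, bounded away from zero, so the hypothesis is immediate; but this can fail even for primitive $\mathpzc{A}$ (for instance the golden-mean matrix has an in-degree-one symbol). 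In general one removes the offending symbols by passing to the recoded presentation of a suitable power $\sigma^{N}$: topological exactness makes $\mathpzc{A}^{N}$ strictly positive for large $N$, forcing every point to have at least two $\sigma^{N}$-preimages and hence the normalised $\sigma^{N}$-potential to be strictly negative and bounded away from zero, after which the renewal asymptotic for $\sigma^{N}$ is transferred back to $\sigma$ using once more that the Birkhoff increments are uniformly bounded. Carrying out this recoding and transfer, while keeping all endpoint corrections of lower order, is the technical heart of the argument.
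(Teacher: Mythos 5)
Your overall strategy coincides with the paper's: reduce to a non-positive cohomologous representative, identify $\sum_{k}\chi_{[0,r]}(-S_{k}\widetilde{\phi})$ with $1+N_{r}$, and combine Lemma~\ref{KRW} with Birkhoff's ergodic theorem, using the uniform bound on $\lvert S_{k}\phi-S_{k}\widetilde{\phi}\rvert$ to transfer the asymptotic back. The one point where you diverge is also where your argument is left incomplete. You normalise by the eigenfunction $h_{\phi}$ and obtain $\widetilde{\phi}\leq 0$, but, as your own golden-mean example shows, this $\widetilde{\phi}$ can vanish on an entire cylinder of positive $\nu_{\phi}$-measure (there, on $[12]$), so $\mathbb{E}(1/Y_{1})=\int(-\widetilde{\phi})^{-1}\,d\nu_{\phi}=\infty$ and the hypothesis of Lemma~\ref{KRW} genuinely fails. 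Your proposed repair --- pass to a power $\sigma^{N}$ with $\mathpzc{A}^{N}>0$, so that every point has at least two $\sigma^{N}$-preimages, $S_{N}\widetilde{\phi}$ is bounded away from zero, and $N N_{r}^{(N)}\leq N_{r}\leq N(N_{r}^{(N)}+1)$ --- is viable ($\nu_{\phi}$ is mixing, hence ergodic for $\sigma^{N}$), but you explicitly defer it as ``the technical heart of the argument,'' so as written the proof is not complete.

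The paper avoids this difficulty by choosing a different cohomologous representative: it invokes the standard fact that a zero-pressure H\"older potential on a topologically exact subshift of finite type is cohomologous to a \emph{strictly negative} H\"older potential. This is available for the following reason. The supremum of $\nu\mapsto\int\phi\,d\nu$ over $\mathcal{M}_{\sigma}$ is attained; if its value were $\geq 0$, the maximising measure would satisfy $h_{\nu}(\sigma)+\int\phi\,d\nu\geq 0=P(\phi,\sigma)$ and hence be an equilibrium measure of zero entropy, contradicting the uniqueness and strict positivity of entropy of $\nu_{\phi}$ in Theorem~\ref{PFR-E-M-Uniq-thm}. Thus $\sup_{\nu}\int\phi\,d\nu<0$, and $\frac{1}{n}S_{n}\phi$, which differs from $\phi$ by the continuous coboundary $g\circ\sigma-g$ with $g=\frac{1}{n}\sum_{j=1}^{n-1}S_{j}\phi$, is strictly negative everywhere for $n$ large. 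With such a representative, $-\widetilde{\phi}$ is bounded away from zero by compactness, $\mathbb{E}(1/Y_{1})<\infty$ is immediate, and your endpoint windows of width $O(1)$ contain a uniformly bounded number of indices, so your squeeze goes through verbatim. Replacing your normalisation step by this choice turns the step you flag as the main obstacle into a one-line verification.
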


\begin{proof}
For a given $\phi \in C(\Sigma_{\mathpzc{A}}; \mathbb{C})$ It is well-known that there exists a strictly negative H\"older continuous potential function $\widetilde{\phi}$ with zero pressure cohomologous to $\phi$.  Therefore, by Theorem \ref{PFR-E-M-Uniq-thm}(5) and by the fact that $\lVert S_{k}\phi - S_{k}\widetilde{\phi} \rVert_{\infty}$ is bounded for all $k \in \mathbb{N}_{0}$, it is sufficient to show the result for $\phi$ a strictly negative H\"older continuous potential function with zero pressure.  The result then follows from Lemma \ref{KRW}  and Birkhoff's ergodic theorem, where, for $k \in \mathbb{N}$, the random variables $Y_{k}$ are defined to be equal to $- \phi \circ \sigma^{k-1}$ with respect to the equilibrium measure $\nu_{\phi}$.
\end{proof}

\begin{cor}\label{Cor2}
Let $(\Sigma_{\mathpzc{A}}, \sigma)$ denote a one-sided topologically exact subshift of finite type and let $\nu_{\phi}$ denote the unique equilibrium measure on $\Sigma_{\mathpzc{A}}$ for the H\"older continuous potential $\phi \in C(\Sigma_{\mathpzc{A}}; \mathbb{R})$.  Then, for each $x \in \Sigma_{\mathpzc{A}}^{*} \cup \{ \emptyset\}$, as $r$ tends to infinity, we have that
\[
\Xi_{x}(e^{-r}) \; \sim \; \frac{\nu_{\phi}([x]) r}{h_{\nu_{\phi}}(\sigma)}.
\]
\end{cor}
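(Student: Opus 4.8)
The plan is to reduce Corollary~\ref{Cor2} to Theorem~\ref{renewalthm} by converting the measure-threshold condition defining $\Xi_{x}$ into a Birkhoff-sum condition by means of the Gibbs property. First I would reduce to the case of zero pressure: since the equilibrium measure depends only on the cohomology class of the potential (Theorem~\ref{PFR-E-M-Uniq-thm}(3),(5)) and is unaffected by the addition of a constant, I may replace $\phi$ by a strictly negative, zero-pressure, H\"older continuous potential $\widetilde{\phi}$ with $\nu_{\widetilde{\phi}} = \nu_{\phi}$, exactly as in the proof of Theorem~\ref{renewalthm}. Since $\Xi_{x}$ depends only on the measure $\nu_{\phi}$, and since $h_{\nu_{\phi}}(\sigma)$ and $\nu_{\phi}([x])$ are invariants of the measure, this reduction changes nothing in the statement while making both the Gibbs property and Theorem~\ref{renewalthm} directly available for $\widetilde{\phi}$.

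Next, I would rewrite the sum by grouping the admissible words according to their length. For each $k$, the indicator of the event $\nu_{\phi}([\omega|_{k}]) > e^{-r}$ is constant on each length-$k$ cylinder, so
\[
\Xi_{x}(e^{-r}) \; = \; \sum_{k \geq \lvert x \rvert} \nu_{\phi}\bigl( \{ \omega \in [x] \, : \, \nu_{\phi}([\omega|_{k}]) > e^{-r} \} \bigr),
\]
where $\omega|_{k}$ denotes the length-$k$ prefix of $\omega$ and $\lvert x \rvert$ the length of $x$. Because $\nu_{\phi}$ is itself a Gibbs measure for $\widetilde{\phi}$ (Theorem~\ref{PFR-E-M-Uniq-thm}(6)), the Gibbs property (\ref{GibbPropertyMeasureDefn}) with pressure zero gives a constant $c > 1$ with $c^{-1} e^{S_{k}\widetilde{\phi}(\omega)} \leq \nu_{\phi}([\omega|_{k}]) \leq c \, e^{S_{k}\widetilde{\phi}(\omega)}$. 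Hence the event $\nu_{\phi}([\omega|_{k}]) > e^{-r}$ is trapped between $\{ -S_{k}\widetilde{\phi}(\omega) < r - \ln c \}$ and $\{ -S_{k}\widetilde{\phi}(\omega) < r + \ln c \}$, which yields the two-sided bound
\[
I_{x}(r - \ln c) - O(1) \; \leq \; \Xi_{x}(e^{-r}) \; \leq \; I_{x}(r + \ln c),
\]
where $I_{x}(s) \mathrel{:=} \int_{[x]} \sum_{k \in \mathbb{N}_{0}} \chi_{[0, s]}(-S_{k}\widetilde{\phi}(\omega)) \, d\nu_{\phi}(\omega)$ is precisely the quantity of Theorem~\ref{renewalthm}. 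The $O(1)$ term accounts for the finitely many prefixes of length $k < \lvert x \rvert$, each contributing at most $\nu_{\phi}([x])$.

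Finally, I would invoke Theorem~\ref{renewalthm}, which gives $I_{x}(s) \sim s \, \nu_{\phi}([x]) / h_{\nu_{\phi}}(\sigma)$ as $s \to \infty$. Since this leading term is linear in $s$, the additive shift by the constant $\pm \ln c$ and the additive $O(1)$ error are both of lower order, so the two bounding quantities are each asymptotic to $r \, \nu_{\phi}([x]) / h_{\nu_{\phi}}(\sigma)$; the squeeze then delivers the claimed asymptotic for $\Xi_{x}(e^{-r})$. I expect the main point requiring care to be exactly this last comparison: one must confirm that passing from the sharp measure-threshold $\nu_{\phi}([\omega|_{k}]) > e^{-r}$ to the Birkhoff-sum condition $-S_{k}\widetilde{\phi}(\omega) \leq r$ costs only a bounded additive change in the parameter $r$ (governed by the Gibbs constant $c$) together with finitely many omitted short prefixes, and that neither perturbation disturbs the first-order asymptotic supplied by the renewal theorem.
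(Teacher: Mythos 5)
Your proposal is correct and follows essentially the same route as the paper, whose own proof is a one-line reduction to zero pressure followed by an appeal to Equation (\ref{variationPrinciple}), Theorem \ref{PFR-E-M-Uniq-thm} and Theorem \ref{renewalthm}. You have simply made explicit the steps the paper leaves implicit: the length-stratified rewriting of $\Xi_{x}$, the Gibbs-property sandwich converting the measure threshold into a Birkhoff-sum threshold at the cost of a bounded shift in $r$, and the $O(1)$ bookkeeping for short prefixes.
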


\begin{proof}
We may assume without loss of generality that the pressure of $\phi$ is equal to zero.  The result then follows by an application of Equation (\ref{variationPrinciple}), Theorem \ref{PFR-E-M-Uniq-thm} and Theorem \ref{renewalthm}.
\end{proof}

\section{Representing subshifts of finite type by spectral triples}\label{MainSection}

Althought the construction of a spectral metric space given in \cite{STCS} gives a noncommutative representation of a one-sided subshift of finite type, the measure theoretical properties of such a spectral triple essentially encodes the measure of maximal entropy.  In what follows, we consider a one-sided, topologically exact subshift of finite type $(\Sigma_{\mathpzc{A}}, \sigma)$ and an equilibrium measure $\nu_{\phi}$ for a H\"older continuous potential function $\phi \in C(\Sigma_{\mathpzc{A}}; \mathbb{C})$.   By refining Antonescu-Ivan's and Christensen's construction, we provide a spectral metric space which represents the metric space $(\Sigma_{\mathpzc{A}}, d)$ whose measure theoretical properties encode the measure $\nu_{\phi}$.  Indeed, by breaking down the projections used in Theorem $2.1$ of \cite{STCS} and by relating the singular values of the Dirac operator to the $\nu_{\phi}$-measure of the cylinder sets of $\Sigma_{\mathpzc{A}}$, we prove that a spectral triple $(A, H, D) \mathrel{:=} (C(\Sigma_{\mathpzc{A}}; \mathbb{C}), L^{2}(\Sigma_{\mathpzc{A}}, \mathcal{B}, \nu_{\phi}), D_{\nu_{\phi}})$ can be constructed so that the following hold.
\begin{enumerate}
\item Connes' pseudo-metric is a metric on $\mathcal{S}(A)$.  Moreover, the topology induced by Connes' pseudo metric on $\mathcal{S}(A)$ coincides with the weak-${*}$-topology defined on $\mathcal{S}(A)$.
\item $(A, H, D)$ is $(1, +)$-summable with metric dimension equal to one.
\item The noncommutative integral given by $(A, H, D)$ agrees with the integral with respect to $\nu_{\phi}$.
\item The noncommutative volume constant of $(A, H, D)$ is equal to $1 / h_{\nu_{\phi}}(\sigma)$.  
\end{enumerate}
We begin by fixing the following notation.  Let $\mu_{\phi}$ denote a Gibbs measure for some H\"older continuous potential function $\phi \in C(\Sigma_{\mathpzc{A}}; \mathbb{R})$ and let
\[
\left\{ (\mu_{\phi}([x]))^{-\nicefrac{1}{2}} \chi_{{[x]}} \, : \, x \in \Sigma \right\} \,  {\cup} \, \left\{ e_{x, j} \, : \, (x, j) \in \textstyle{\bigcup}_{y \in \Sigma_{\mathpzc{A}}^{*}} \{ y \}  \times \{ 1, 2, \dots, \alpha(y) -1 \} \right\}
\]
denote a Haar basis for $L^{2}(\Sigma_{\mathpzc{A}}, \mathcal{B}, \mu_{\phi})$, as described in Subsection \ref{TheHaarBasis}.  Let $\tau_{{\mu_{\phi}}}: C(\Sigma_{\mathpzc{A}}; \mathbb{C}) \to \mathbb{C}$ denote the tracial state defined, for each $a \in C(\Sigma_{\mathpzc{A}}; \mathbb{C})$, by
\[
\tau_{{\mu_{\phi}}}(a) \mathrel{:=} \int_{\Sigma_{\mathpzc{A}}} a \, d \mu_{\phi}.
\]
Then the Gelfand-Na\u{\i}mark-Segal completion of $C(\Sigma_{\mathpzc{A}}; \mathbb{C})$ with respect to $\tau_{{\mu_{\phi}}}$ is the Hilbert space $L^{2}(\Sigma_{\mathpzc{A}}, \mathcal{B}, \mu_{\phi})$.  Define the operator $D_{\mu_{\phi}}$  on $\text{Dom}(D_{\mu_{\phi}})$, a dense subset of $L^{2}(\Sigma_{\mathpzc{A}}, \mathcal{B}, \mu_{\phi})$, by
\begin{equation}
D_{\mu_{\phi}} \; \mathcal{:=} \; \sum_{x \in \Sigma} \frac{1}{\mu_{\phi}([x])} \langle \cdot, \chi_{{[x]}} \rangle \chi_{{[x]}} - \langle \cdot, \chi_{{\Sigma_{\mathpzc{A}}}} \rangle \chi_{{\Sigma_{\mathpzc{A}}}} \, + \sum_{y \in \Sigma^{*}_{\mathpzc{A}}} \frac{\alpha(y) - 1}{\mu_{\phi}([y])} \sum_{k = 1}^{\alpha(y) - 1} \langle \cdot, e_{y, k} \rangle  e_{y, k}.\label{ACDirac-adaptation}
\end{equation}
{Observe that the domain of $D_{\mu_{\phi}}$ contains all finite linear combinations of element of the given Haar basis, and thus $D_{\mu_{\phi}}$ is a densely defined operator.}

\begin{thm}\label{Subshiftoffinitetypethmspectraltripleandother}
The triple $(C(\Sigma_{\mathpzc{A}}; \mathbb{C}), L^{2}(\Sigma_{\mathpzc{A}}, \mathcal{B}, \pi, \mu_{\phi}), D_{\mu_{\phi}})$ is a spectral triple, where $\pi: C(\Sigma_{\mathpzc{A}}; \mathbb{C}) \to B(L^{2}(\Sigma_{\mathpzc{A}}, \mathcal{B}, \mu_{\phi}))$ is given by $\pi(a)h \, \mathrel{:=}\, a \cdot h$, for each $a \in C(\Sigma_{\mathpzc{A}}; \mathbb{C})$ and each $h \in  L^{2}(\Sigma_{\mathpzc{A}}, \mathcal{B}, \mu_{\phi})$.
\end{thm}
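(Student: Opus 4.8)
The plan is to verify the four defining properties of Definition \ref{STdefn} for the candidate triple: that $\pi$ is a faithful $*$-representation, that $D_{\mu_{\phi}}$ is essentially self-adjoint, that $D_{\mu_{\phi}}$ has compact resolvent, and that the set of elements whose commutator with $D_{\mu_{\phi}}$ is bounded is norm dense. Faithfulness of $\pi$ is immediate: since $(\Sigma_{\mathpzc{A}}, \sigma)$ is topologically exact, the Gibbs measure $\mu_{\phi}$ has full support, so $\pi(a) = 0$ forces $a = 0$ in $L^{2}$ and hence $a \equiv 0$ by continuity; the identities $\pi(ab) = \pi(a)\pi(b)$ and $\pi(\bar{a}) = \pi(a)^{*}$ are clear from the pointwise definition of $\pi$.

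First I would diagonalise $D_{\mu_{\phi}}$ against the Haar basis of Theorem \ref{Lemma-Generlised-Haar-Basis}. Writing $g_{x} := (\mu_{\phi}([x]))^{-\nicefrac{1}{2}}\chi_{[x]}$ for $x \in \Sigma$, set $V := \mathrm{span}\{ g_{x} : x \in \Sigma \}$ and let $W$ be the closed span of the vectors $e_{y,k}$. Using that each $e_{y,k}$ is supported on $[y]$ and satisfies $\langle e_{y,k}, \chi_{[x]}\rangle = 0$ for every $x \in \Sigma$ (whence also $\langle e_{y,k}, \chi_{\Sigma_{\mathpzc{A}}}\rangle = 0$, as $\chi_{\Sigma_{\mathpzc{A}}} = \sum_{x \in \Sigma}\chi_{[x]}$), one checks directly that $D_{\mu_{\phi}}$ leaves the orthogonal summands $V$ and $W$ invariant: on the finite-dimensional space $V$ the first two summands act as the identity minus the orthogonal projection onto the constants $\chi_{\Sigma_{\mathpzc{A}}} \in V$, while the third summand vanishes; on $W$ the first two summands vanish and each $e_{y,k}$ is an eigenvector with eigenvalue $(\alpha(y) - 1)/\mu_{\phi}([y])$. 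Hence $D_{\mu_{\phi}}$ is symmetric on the dense domain of finite Haar combinations and admits there a complete orthonormal system of eigenvectors with real eigenvalues $\{0\} \cup \{1\} \cup \{ (\alpha(y)-1)/\mu_{\phi}([y]) : y \in \Sigma^{*}_{\mathpzc{A}},\ \alpha(y) \geq 2 \}$, which yields essential self-adjointness.

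For the compact resolvent I would show the spectrum is discrete with finite multiplicities and accumulates only at $+\infty$. Fixing $C > 0$, an eigenvalue $(\alpha(y)-1)/\mu_{\phi}([y]) \leq C$ forces $\mu_{\phi}([y]) \geq 1/C$; since cylinders of a fixed length are pairwise disjoint and $\max_{y \in \Sigma_{\mathpzc{A}}^{k}} \mu_{\phi}([y]) \to 0$ as $k \to \infty$ by the Gibbs property (\ref{GibbPropertyMeasureDefn}), only finitely many words $y$ meet this bound. Thus each eigenvalue has finite multiplicity and the eigenvalues tend to infinity, so $(1 + D_{\mu_{\phi}}^{2})^{-\nicefrac{1}{2}}$ is compact.

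Finally, for the commutator condition I would take the dense subalgebra of locally constant functions, which is uniformly dense in $C(\Sigma_{\mathpzc{A}};\mathbb{C})$ since the cylinder sets generate the topology. If $a$ is constant on cylinders of depth $N$, then for every $y$ with $\lvert y \rvert \geq N$ the function $e_{y,k}$ is supported in a single depth-$N$ cylinder on which $a$ is constant, so $e_{y,k}$ is a simultaneous eigenvector of $\pi(a)$ and $D_{\mu_{\phi}}$, giving $[D_{\mu_{\phi}}, \pi(a)] e_{y,k} = 0$. Consequently the commutator annihilates the cofinite-dimensional subspace spanned by these $e_{y,k}$ and factors through its finite-dimensional complement, so it is of finite rank and in particular extends to a bounded operator; as such $a$ are dense, the density condition of Definition \ref{STdefn} follows. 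I expect the main obstacle to be this last step, namely isolating the right dense subalgebra and recognising, through the simultaneous-eigenvector observation, that these commutators are in fact finite rank, together with the uniform decay estimate for $\mu_{\phi}([y])$ required for the compact resolvent.
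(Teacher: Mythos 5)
Your proof is correct and rests on the same two pillars as the paper's: the Haar basis of Theorem \ref{Lemma-Generlised-Haar-Basis} and the locally constant functions as the dense Lipschitz subalgebra. For essential self-adjointness and the compact resolvent your argument is a reorganisation rather than a new route: the paper verifies density of $\mathrm{Ran}(D_{\mu_{\phi}} \pm i\mathbb{1})$ by exhibiting explicit preimages of each basis vector, which is precisely your statement that the Haar vectors form a complete system of eigenvectors with real eigenvalues; and your eigenvalue count (at most finitely many $y$ with $(\alpha(y)-1)/\mu_{\phi}([y]) \leq C$, since such $y$ satisfy $\mu_{\phi}([y]) \geq 1/C$ and cylinder measures decay uniformly in the word length) spells out what the paper compresses into a one-line appeal to the Gibbs property. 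The genuine divergence is the commutator step. The paper proves the quantitative estimate $\lVert [D_{\mu_{\phi}}, \pi(f)] h \rVert \leq 2 m_{k} \lVert f \rVert_{\infty} \lVert h \rVert_{2}$ on $H_{k+1}$, with $m_{k}$ an explicit exponential bound on $\lVert D_{\mu_{\phi}}\vert_{H_{k+1}}\rVert$ coming from the Gibbs inequality, and combines it with the vanishing of the commutator on $e_{y,j}$ for $\lvert y \rvert > k$; you instead note that for $f$ of depth $N$ each $e_{y,j}$ with $\lvert y \rvert \geq N$ is a simultaneous eigenvector of $\pi(f)$ and $D_{\mu_{\phi}}$, so the commutator annihilates a cofinite-dimensional subspace and is finite rank. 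Your version is cleaner and fully sufficient for the theorem as stated; what it forgoes is the explicit dependence of the bound on $\lVert f \rVert_{\infty}$ and on the depth of $f$, which is exactly the quantitative input the paper reuses in the proof of Theorem \ref{thm-connes-metric-adapt-dirac-AC}. The one assertion that deserves a supporting line is that $\max_{y \in \Sigma_{\mathpzc{A}}^{k}} \mu_{\phi}([y]) \to 0$: this does not follow from Equation (\ref{GibbPropertyMeasureDefn}) alone unless $\sup \phi < P(\phi, \sigma)$, and is most cleanly obtained either from non-atomicity of $\mu_{\phi}$ together with compactness, or by passing to the cohomologous strictly negative normalised potential as the paper does in the subsequent proof.
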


\begin{proof}
It is clear to see that the set $C(\Sigma_{\mathpzc{A}}; \mathbb{C})$ equipped with the supremum norm is a $C^{*}$-algebra, that $L^{2}(\Sigma_{\mathpzc{A}}, \mathcal{B}, \mu_{\phi})$ is a complex Hilbert space and that $(\pi, L^{2}(\Sigma_{\mathpzc{A}}, \mathcal{B}, \mu_{\phi}))$ is a faithful $*$-representation of $C(\Sigma_{\mathpzc{A}}; \mathbb{C})$.  Further, we have that the domain of $D_{\mu_{\phi}}$ contains all finite linear combinations of element of the given Haar basis, and thus $D_{\mu_{\phi}}$ is a densely defined operator.  Observe that by the properties of a Gibbs measure (Equation (\ref{GibbPropertyMeasureDefn})), the operator $(1 + D_{\mu_{\phi}}^{2})^{-\nicefrac{1}{2}}$ is a bounded operator on $L^{2}(\Sigma_{\mathpzc{A}}, \mathcal{B}, \mu_{\phi})$ which can be approximated by operators in $B( L^{2}(\Sigma_{\mathpzc{A}}, \mathcal{B}, \mu_{\phi})$ with finite dimensional range.  Therefore, $D_{\mu_{\phi}}$ has a compact resolvent.  Moreover, the sets $\text{Ran}(D_{\mu_{\phi}} \pm i 1)$ are $L^{2}$-norm-dense in $L^{2}(\Sigma_{\mathpzc{A}}, \mathcal{B}, \mu_{\phi})$.  Indeed, this follows since $D$ is a linear operator and since we have the following.
\begin{enumerate}
\item For each $x \in \Sigma$, we have that
\[
(D_{\mu_{\phi}} \pm i 1)\left( \frac{1}{1 \pm i} (\chi_{{[x]}}) \mp i \mu_{\phi}([x]) \chi_{{\Sigma_{\mathpzc{A}}}} \right) \, = \, \chi_{{[x]}}.
\]
\item For each $(x, j) \in \bigcup_{y \in \Sigma_{\mathpzc{A}}^{*}} \{ y \} \times \{ 1, 2, \dots, \alpha(y) - 1\}$, we have that
\[
(D_{\mu_{\phi}} \pm i1)\left( \frac{\mu_{\phi}([x])}{\alpha(x) - 1 \pm i} \, e_{x, j} \right) \, = \, e_{x, j}.
\]
\end{enumerate}
Moreover, we have that $D_{\mu_{\phi}}$ is symmetric on its domain.  Indeed, for each $h_{1}, h_{2} \in \text{Dom}(D_{\mu_{\phi}}) \subset L^{2}(\Sigma_{\mathpzc{A}}, \mathcal{B}, \mu_{\phi})$, a short calculation using the fact that $L^{2}(\Sigma_{\mathpzc{A}}, \mathcal{B}, \mu_{\phi})$ is separable, and hence, that every element can be expressed as a linear (possibly infinite) combination of basis elements will show that $\langle D_{\mu_{\phi}}(h_{1}), h_{2} \rangle = \langle h_{1}, D_{\mu_{\phi}}(h_{2}) \rangle$.  Hence, by  \cite[Corollary to Theorem $\mathrm{VIII}.3$]{simonreed}, it follows that $D_{\mu_{\phi}}$ is an essentially self-adjoint operator.  Therefore, in order to show that $(C(\Sigma_{\mathpzc{A}}; \mathbb{C}), L^{2}(\Sigma_{\mathpzc{A}}, \mathcal{B}, \mu_{\phi}),$ $D_{\mu_{\phi}})$ is a spectral triple, all that remains to show is that the set
\begin{align*}
\mathcal{A} \, \mathrel{:=} \, \{ a \in C(\Sigma_{\mathpzc{A}}; \mathbb{C}) \; : \;&  [D_{\mu_{\phi}}, \pi(a)] \; \text{is densely defined and extends to}\\
& \text{a bounded operator defined on} \;  L^{2}(\Sigma_{\mathpzc{A}}, \mathcal{B}, \mu_{\phi}) \}
\end{align*}
is norm-dense in $C(\Sigma_{\mathpzc{A}}; \mathbb{C})$ with respect to the supremum norm.  To this end, for each $k \in \mathbb{N}$, consider the finite dimensional algebra $\mathcal{C}_{k}$ consisting of locally constant complex-valued functions spanned by the set $\{ \chi_{[x]} : x \in \Sigma_{\mathpzc{A}} \}$ and let $H_{k}$ denote the subspace of $\mathcal{L}^{2}(\Sigma_{\mathpzc{A}}, \mathcal{B}, \mu_{\phi})$ spanned by the functions in $\mathcal{C}_{k}$.  Observe that $\bigcup_{k \in \mathbb{N}} \mathcal{C}_{k}$ is dense under the supremum norm in $C(\Sigma_{\mathpzc{A}}; \mathbb{C})$ and thus the result will follow, if for an arbitrary $k \in \mathbb{N}$ and for any $f \in \mathcal{C}_{k}$, the commutator $[D_{\mu_{\phi}}, \pi(f)]$ is defined and bounded.  Further, observe that by the construction of the Haar basis, for $k \in \mathbb{N}$, the subspace $H_{k + 1}$ is spanned by the set
\[
\left\{ e_{x, j} : (x, j) \in \bigcup_{m =1}^{k} \bigcup_{y \in \Sigma_{\mathpzc{A}}^{m}} \{ y \} \times \{ 1, 2, \dots, \alpha(y) \} \right\} \, \textstyle{\bigcup} \, \left\{ \chi_{[y]} : y \in \Sigma_{\mathpzc{A}} \right\}
\]
Next, let $\beta \mathrel{:=} \sup \{ \phi(\omega) : \omega \in \Sigma_{\mathpzc{A}} \} - P(\phi, \sigma) < \infty$, let $c >1$ be the constant given in Theorem \ref{P-FO-} and define $m_{k} \mathrel{:=} c e^{-k \beta}$, for each $k \in \mathbb{N}$.  By construction $D_{\mu_{\phi}}\vert_{H_{k+1}}$ is bounded with $\lVert D_{\mu_{\phi}}\vert_{H_{k+1}} \rVert \leq m_{k}$.  Further, for all $f \in \mathcal{C}_{k}$ and $h \in H_{k + 1}$, it is clear to see that $h \in \text{Dom}([D_{\mu_{\phi}}, \pi(f)])$ and that $\lVert [D_{\mu_{\phi}}, \pi(f)] h \rVert \leq 2 m_{k} \lVert f \rVert_{\infty} \lVert h \rVert_{2}$.  Next we observe, for $n > k$, $y \in \Sigma_{\mathpzc{A}}^{n}$, $i \in \{ 1, 2, \dots, \alpha(y) - 1 \}$ and $x \in \Sigma^{k}$, that $[D_{\mu_{\phi}}, \pi(\chi_{[x]})] e_{y, i} = 0$ and hence, $[D_{\mu_{\phi}}, \pi(f)] e_{y, i} = 0$, for all $f \in \mathcal{C}_{k}$.  Therefore, for any $f \in \mathcal{C}_{k}$, the commutator $[D_{\mu_{\phi}}, \pi(f)]$ is defined and bounded by $2 m_{k} \lVert f \rVert_{\infty}$ on $\bigcup_{n \in \mathbb{N}} H_{n}$, which is norm dense in $L^{2}(\Sigma_{\mathpzc{A}}, \mathcal{B}, \mu_{\phi})$.  
\end{proof}

\subsection{Metric aspects}

The spectral triple given in Theorem \ref{Subshiftoffinitetypethmspectraltripleandother} should be able to recover some of the geometric structure of the original space $\Sigma_{\mathpzc{A}}$.  The following theorem shows
that this is the case.

\begin{thm}\label{thm-connes-metric-adapt-dirac-AC}
Letting $(A, H, D) \mathrel{:=} (C(\Sigma_{\mathpzc{A}}; \mathbb{C}), L^{2}(\Sigma_{\mathpzc{A}}, \mathcal{B}, \mu_{\phi}), D_{\mu_{\phi}})$, we have that Connes' pseudo-metric $d$ is a metric on $\mathcal{S}(A)$.  Moreover, the topology induced by $d$ on $\mathcal{S}(A)$ coincide with the weak-${*}$-topology defined on $\mathcal{S}(A)$.  Hence, the spectral triple $(A, H, D)$ is a spectral metric space.
\end{thm}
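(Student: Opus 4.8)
The plan is to deduce both assertions from the Rieffel--Pavlovi\'c criterion (Theorem \ref{RieffelandPavlovicThmNCG}). Since a totally bounded set is bounded, it suffices to verify the stronger condition in part (2): that
\[
\mathcal{A}_{D} \, = \, \{ a \in A : [D_{\mu_\phi}, \pi(a)] \; \text{extends to a bounded operator with} \; \|[D_{\mu_\phi}, \pi(a)]\| \leq 1 \}
\]
has totally bounded image in the quotient $A/\{z\mathbb{I} : z \in \mathbb{C}\}$; this yields both that Connes' pseudo-metric is a metric and that its topology is the weak-$*$-topology. Because $\Sigma_{\mathpzc{A}}$ is compact, the Arzel\`a--Ascoli theorem reduces total boundedness of mean-zero representatives of $\mathcal{A}_D$ to two facts: a uniform bound on the oscillation $\sup a - \inf a$ of the $a \in \mathcal{A}_D$, and uniform equicontinuity of the family $\mathcal{A}_D$.

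Both facts will follow from a single commutator estimate comparing $\|[D_{\mu_\phi}, \pi(a)]\|$ with a Lipschitz constant for the ultrametric
\[
\rho(\omega, \upsilon) \, := \, \mu_\phi([x]), \quad x \; \text{the common prefix of length} \; \omega\wedge\upsilon,
\]
with $\rho(\omega,\omega) := 0$. One checks that $\rho$ induces the cylinder topology, since the $\mu_\phi$-measures of nested cylinders decrease to zero. Using the matrix-element identity $\langle[D_{\mu_\phi},\pi(a)]e_{y,k}, e_{z,j}\rangle = (\lambda_z - \lambda_y)\langle a\, e_{y,k}, e_{z,j}\rangle$, where $\lambda_y = (\alpha(y)-1)/\mu_\phi([y])$ is the eigenvalue of $D_{\mu_\phi}$ on the level-$\lvert y\rvert$ Haar vectors $e_{y,k}$, I would test the commutator on suitable Haar vectors supported on a cylinder $[y]$ -- this is the computation underlying Theorem~2.1 of Antonescu-Ivan and Christensen \cite{STCS}, here refined to the Haar basis of Theorem \ref{Lemma-Generlised-Haar-Basis}. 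The outcome is a local lower bound: there is a constant $C$, independent of $y$, such that the oscillation of $a$ across the children of $y$ is at most $C\,\mu_\phi([y])\,\|[D_{\mu_\phi},\pi(a)]\|$; equivalently $\mathrm{Lip}_\rho(a) \leq C\,\|[D_{\mu_\phi},\pi(a)]\|$.

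Granting this, I would telescope along branches. For $\omega, \upsilon$ with $\omega\wedge\upsilon = n$, writing $a$ as the limit of its $\mu_\phi$-conditional expectations on cylinders and summing the level-by-level details controlled above gives
\[
|a(\omega) - a(\upsilon)| \, \leq \, C\,\|[D_{\mu_\phi},\pi(a)]\| \sum_{k \geq n} \big(\mu_\phi([\omega_1\cdots\omega_k]) + \mu_\phi([\upsilon_1\cdots\upsilon_k])\big).
\]
To control the tails uniformly I would pass to a strictly negative H\"older potential $\widetilde{\phi}$ with zero pressure cohomologous to $\phi$, exactly as in the proof of Theorem \ref{renewalthm}; by the Gibbs property of Theorem \ref{P-FO-} one has $\mu_\phi([\omega_1\cdots\omega_k]) \asymp e^{S_k\widetilde\phi(\omega)} \leq c\,(e^{\sup\widetilde\phi})^{k}$ with $e^{\sup\widetilde\phi} < 1$, so the series above is uniformly summable with geometric tails. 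Taking $n = 0$ bounds the oscillation of every $a \in \mathcal{A}_D$ (uniform boundedness), while letting $n \to \infty$ shows that $\sup_{a\in\mathcal{A}_D}(\sup_{[x]}a - \inf_{[x]}a) \to 0$ as $\lvert x\rvert \to \infty$ (uniform equicontinuity). By Arzel\`a--Ascoli the image of $\mathcal{A}_D$ in $A/\{z\mathbb{I}:z\in\mathbb{C}\}$ is totally bounded, and Theorem \ref{RieffelandPavlovicThmNCG} completes the proof.

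I expect the main obstacle to be the commutator lower bound itself. Because $D_{\mu_\phi}$ is diagonal in the Haar basis while $\pi(a)$ is multiplication and hence mixes Haar levels, the commutator couples Haar vectors across all scales, and one must show these cross-scale matrix elements cannot conspire to make $\|[D_{\mu_\phi},\pi(a)]\|$ small while the oscillation of $a$ stays large. Isolating, for each word $y$, a test vector that extracts precisely the factor $\lambda_y$ times the child-oscillation of $a$, with a constant uniform in $y$, is the technical heart of the argument; the subsequent telescoping and the passage to $\widetilde\phi$ securing uniform summability are comparatively routine.
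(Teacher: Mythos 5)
Your overall architecture coincides with the paper's: both arguments run through Theorem \ref{RieffelandPavlovicThmNCG}, decompose $a$ into its martingale/Haar increments over the cylinder filtration, bound each increment in the supremum norm by a geometric term obtained by replacing $\phi$ with a cohomologous strictly negative zero-pressure potential, and deduce total boundedness from the finite-dimensionality of the level-$k$ approximants (your Arzel\`a--Ascoli packaging is an equivalent way to finish). The problem is that the single estimate carrying all the weight --- what you call the commutator lower bound, $\mathrm{Lip}_{\rho}(a)\leq C\,\lVert[D_{\mu_{\phi}},\pi(a)]\rVert$ --- is left as an acknowledged obstacle rather than proved, and your worry about cross-scale matrix elements ``conspiring'' indicates you have not located the test vector that makes the issue disappear. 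As written, this is a genuine gap: without that estimate neither the uniform bound on oscillation nor equicontinuity is established.

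The paper closes the gap with a choice worth internalising. Since $D_{\mu_{\phi}}\chi_{\Sigma_{\mathpzc{A}}}=0$, applying the commutator to the single vector $\chi_{\Sigma_{\mathpzc{A}}}$ gives $[D_{\mu_{\phi}},\pi(a)]\chi_{\Sigma_{\mathpzc{A}}}=D_{\mu_{\phi}}(a)$, and compressing by the level-$k$ Haar projection $P_{k+1}=\sum_{x\in\Sigma_{\mathpzc{A}}^{k}}\sum_{j}\langle\cdot,e_{x,j}\rangle e_{x,j}$ yields
\[
1\;\geq\;\bigl\lVert P_{k+1}[D_{\mu_{\phi}},\pi(a)]\chi_{\Sigma_{\mathpzc{A}}}\bigr\rVert_{2}^{2}\;\geq\;\sum_{x\in\Sigma_{\mathpzc{A}}^{k}}\sum_{j=1}^{\alpha(x)-1}\frac{\lvert\langle a,e_{x,j}\rangle\rvert^{2}}{\mu_{\phi}([x])^{2}}.
\]
A lower bound for the operator norm requires only one vector, so no control of off-diagonal blocks is needed; the cross-scale coupling you feared never enters. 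Cauchy--Schwarz over the at most $l-1$ detail functions attached to a fixed $x$ converts this $\ell^{2}$ bound into $\sum_{j}\mu_{\phi}([x])^{-1}\lvert\langle a,e_{x,j}\rangle\rvert\leq l^{1/2}$, and since $\lvert e_{x,j}(\omega)\rvert\lesssim\mu_{\phi}([x])^{-1/2}$ along the branch of $\omega$, one obtains $\lVert\pi_{k+1}(a)-\pi_{k}(a)\rVert_{\infty}\lesssim\sup_{x\in\Sigma_{\mathpzc{A}}^{k}}\mu_{\phi}([x])^{1/2}\leq c^{1/2}e^{k\beta}$ with $\beta<0$ --- exactly the geometric increment your telescoping needs (a separate application of the same idea with $P_{1}$ handles the level-zero oscillation). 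Two further points you should not elide: the telescoped limit of the conditional expectations must be identified with $a$ itself, which the paper does using that $\chi_{\Sigma_{\mathpzc{A}}}$ is separating and cyclic for $\pi(A)$; and $[D_{\mu_{\phi}},\pi(a)]$ is a priori only densely defined, so the estimates must be run on its bounded closure.
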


\begin{proof}
The proof of this result is motivated by the proof of \cite[Theorem $2.1$]{STCS}.  For ease of notation, set $\mathrm{card}(\Sigma) \mathrel{=:} l \in \mathbb{N}$ and set
\begin{eqnarray*}
\mathcal{A}_{{D_{\mu_{\phi}}}} \, \mathrel{:=}\, \{ a \in A &\hspace{-4mm}:& \hspace{-3mm} [D_{\mu_{\phi}}, \pi(a)] \; \text{is densely defined and extends to a bounded}\\
&& \hspace{-3mm} \text{operator defined on $H$ with norm less than or equal to one} \}.
\end{eqnarray*}
Our aim is to show that there exists a constant $C$ such that $\lVert a - \int_{\Sigma_{\mathpzc{A}}} a \, d\mu_{\phi} \rVert_{\infty} \leq C$, for all $a \in \mathcal{A}_{D_{\mu_{\phi}}}$.  Since if this was the case, then the quotient norm of $a + \{ z \chi_{{\Sigma_{\mathpzc{A}}}} \, : \, z \in \mathbb{C} \}$ in the quotient space $\mathcal{A}_{{D_{\mu_{\phi}}}} / \{ z \chi_{{\Sigma_{\mathpzc{A}}}} \, : \, a \in A \}$ is bounded above by $C$.  Therefore, by the first part of Theorem \ref{RieffelandPavlovicThmNCG}, Connes' pseudo-metric $d$ is a metric.
To this end, let  $\pi_{0}, \pi_{1}: A \to A$ denote the projections given, for each $a \in A$, by 
\[
\pi_{0}(a) \mathrel{:=} \int_{\Sigma_{\mathpzc{A}}} a \, d\mu_{\phi} \, \chi_{{\Sigma_{\mathpzc{A}}}}, \quad \text{and} \quad
\pi_{1}(a) \mathrel{:=} \sum_{x \in \Sigma} (\mu_{\phi}([x]))^{-1} \int_{[x]} a \, d\mu_{\phi} \, \chi_{{[x]}}.
\]
Further, for each $k \in \mathbb{N}$, let $\pi_{k + 1}: A \to A$ denote the projection given, for each $a \in A$ by
\begin{multline*}
\hspace{-3.75mm}\pi_{k + 1}(a)\\
\begin{split}
\mathrel{:=}\;\;& \sum_{x \in \Sigma} (\mu_{\phi}([x]))^{-1} \int_{[x]} a \, d\mu_{\phi} \, \chi_{{[x]}} \, + \, \sum_{m = 1}^{k} \sum_{x \in \Sigma_{\mathpzc{A}}^{m}} \sum_{j = 1}^{\alpha(x) - 1} \int_{\Sigma_{\mathpzc{A}}} a \cdot e_{x, j} \, d\mu_{\phi} \, e_{x, j}.
\end{split}
\end{multline*}
Suppose that there exists a constants $C' > 0$ and $\beta < 0$ such that
\begin{align}\label{Cauchy_Norm_Bound}
\lVert \pi_{k}(a) - \pi_{k + m}(a) \rVert_{\infty} \leq C' \sum_{n = k}^{k +m -1} e^{n \beta}
\end{align}
for all $k, m \in \mathbb{N}$ and $a \in \mathcal{A}_{D_{\mu_{\phi}}}$.  Then $( \pi_{k}(a) )_{k \in \mathbb{N}}$ is a Cauchy sequence in $A$, with respect to the supremum norm.  Therefore, the sequence $(\pi_{k}(a) )_{k \in \mathbb{N}}$ is convergent.  Let $b \in A$ denote its limit and observe that 
\begin{align}\label{eq_a=b}
\pi(b)\chi_{{\Sigma_{\mathpzc{A}}}} \, = \, \lim_{k \to \infty} \pi(\pi_{k}(a)) \chi_{{\Sigma_{\mathpzc{A}}}} \, = \, \lim_{k \to \infty} \, \sum_{m = 1}^{k} \, P_{m} \pi(a) \chi_{{\Sigma_{\mathpzc{A}}}} \, = \, \pi(a) \chi_{{\Sigma_{\mathpzc{A}}}}.
\end{align}
Here $P_{1}: H \to H$ denotes the projections given by
\[
P_{1} \;\mathrel{:=} \; \sum_{x \in \Sigma} (\mu_{\phi}([x]))^{-1} \langle \cdot , \chi_{{[x]}} \rangle \chi_{{[x]}},
\]
and, for each $k \in \mathbb{N}$, we let $P_{k + 1}: H \to  H$ denote the projection given by
\[
P_{k + 1} \mathrel{:=} \sum_{x \in \Sigma_{\mathpzc{A}}^{k}} \sum_{j = 1}^{\alpha(x) - 1} \langle \cdot, e_{x, j} \rangle e_{x, j}.
\]
Since $\chi_{{\Sigma_{\mathpzc{A}}}}$ is a separating and cyclic vector for the subalgebra $\pi(A)$ in $B(H)$, it follows from Equation (\ref{eq_a=b}) that $b = a$.   {Now, for each $a \in \mathcal{A}_{D_{\mu_{\phi}}}$, since $D_{\mu_{\phi}} \chi_{\Sigma_{\mathpzc{A}}} = 0$, we have that
\begin{align*}
1 \geq \lVert \overline{[D_{\mu_{\phi}}, \pi(a)]} \rVert \geq& \; {\lVert P_{1} \overline{[D_{\mu_{\phi}}, \pi(a)]} \rVert}\nonumber\\
{=}& \; \lVert P_{1} [D_{\mu_{\phi}}, \pi(a)] \rVert\nonumber\\
\geq& \; \lVert P_{1} [D_{\mu_{\phi}}, \pi(a)] \chi_{\Sigma_{\mathpzc{A}}} \rVert_{2}\nonumber\\
=& \; \lVert P_{1} D_{\mu_{\phi}} \pi(a) \chi_{\Sigma_{\mathpzc{A}}} - P_{1}\pi(a)D_{\mu_{\phi}} \chi_{\Sigma_{\mathpzc{A}}}\rVert_{2}\nonumber\\
=& \; \lVert P_{1} D_{\mu_{\phi}} \pi(a) \chi_{\Sigma_{\mathpzc{A}}} \rVert_{2},
\end{align*}
where  $\overline{[D_{\mu_{\phi}}, \pi(a)]} \in B(L^{2}(\Sigma_{\mathpzc{A}}, \mathcal{B}, \mu_{\phi}))$ denotes the continuous extension of the densely defined operator $[D_{\mu_{\phi}}, \pi(a)]$.  Further, by the Gibbs property of $\mu_{\phi}$, as stated in Equation (\ref{GibbPropertyMeasureDefn}), and since $\mu_{\phi}$ is a probability measure, we have $0 < \mu_{\phi}([x]) < 1$, for each $x \in \Sigma$. Therefore,
\begin{align*}
1 &\geq  \lVert P_{1} D_{\mu_{\phi}} \pi(a) \chi_{\Sigma_{\mathpzc{A}}} \rVert_{2}\\
&=  \left( \langle P_{1}D_{\mu_{\phi}} \pi(a) \chi_{\Sigma_{\mathpzc{A}}}, P_{1}D_{\mu_{\phi}} \pi(a) \chi_{\Sigma_{\mathpzc{A}}} \rangle_{2} \right)^{1/2}\\
&= \left( \left( \int_{\Sigma_{\mathpzc{A}}} a \, d\mu_{\phi} \right)^{2} + \sum_{x \in \Sigma} \frac{1}{\mu_{\phi}([x])^{2}} \left( \int_{[x]} a \, d\mu_{\phi} \right)^{2} - \sum_{x \in \Sigma} \int_{\Sigma_{\mathpzc{A}}} a \, d\mu \int_{[x]} a \, d\mu \right)^{1/2}\\
&=  \left(\sum_{x \in \Sigma} \frac{1}{\mu_{\phi}([x])^{2}} \left( \int_{[x]} a \, d\mu_{\phi} \right)^{2} \right)^{1/2}\\
&\geq { \frac{1}{l^{1/2}}}\sum_{x \in \Sigma} \frac{1}{\mu_{\phi}([x])} \left\lvert \int_{[x]} a \, d\mu_{\phi} \right\rvert\\
&\geq {  \frac{1}{2\cdot l^{1/2}}} \left(\left\lvert \int_{\Sigma_{\mathpzc{A}}} a \, d\mu_{\phi} \right\rvert + \sum_{x \in \Sigma} \frac{1}{\mu_{\phi}([x])} \left\lvert \int_{[x]} a \, d\mu_{\phi} \right\rvert \right)\\
&\geq {  \frac{1}{2\cdot l^{1/2}}} \sup \left\{ \left\lvert  \int_{\Sigma_{\mathpzc{A}}} a \, d\mu_{\phi} \right\rvert + \frac{1}{\mu_{\phi}([x])} \left\lvert \int_{[x]} a \, d\mu_{\phi}  \right\rvert : x \in \Sigma \right\}\\
&\geq {  \frac{1}{2\cdot l^{1/2}}} \sup \left\{ \left\lvert  \int_{\Sigma_{\mathpzc{A}}} a \, d\mu_{\phi} - \frac{1}{\mu_{\phi}([x])}  \int_{[x]} a \, d\mu_{\phi}  \right\rvert : x \in \Sigma \right\}\\
&= {  \frac{1}{2\cdot l^{1/2}}} \sup \left\{ \left\lvert  \int_{\Sigma_{\mathpzc{A}}} a \, d\mu_{\phi} \, \chi_{{\Sigma_{\mathpzc{A}}}}(\omega) - \sum_{x \in \Sigma} \frac{1}{\mu_{\phi}([x])} \int_{[x]} a \, d\mu_{\phi} \, \chi_{{[x]}}(\omega) \right\rvert : \omega \in \Sigma_{\mathpzc{A}} \right\}\\
&= { \frac{1}{2\cdot l^{1/2}}} \left\lVert \int_{\Sigma_{\mathpzc{A}}} a \, d\mu_{\phi} \, \chi_{{\Sigma_{\mathpzc{A}}}} - \sum_{x \in \Sigma} (\mu_{\phi}([x]))^{-1} \int_{[x]} a \, d\mu_{\phi} \, \chi_{{[x]}} \right\rVert_{\infty}\\
&= { \frac{1}{2\cdot l^{1/2}}} \lVert \pi_{0}(a) - \pi_{1}(a) \rVert_{\infty}.
\end{align*}}
(Recall that $l \mathrel{:=} \text{card}(\Sigma)$.)  Thus, for each $k \in \mathbb{N}$ and $a \in \mathcal{A}_{D_{\mu_{\phi}}}$, we have that
\begin{align*}
\lVert \pi_{0}(a) - \pi_{k}(a) \rVert_{\infty} &\leq \lVert \pi_{0}(a) - \pi_{1}(a) \rVert_{\infty} + \lVert \pi_{1}(a) - \pi_{k}(a) \rVert_{\infty}\\
&\leq 2\cdot l^{1/2} + C'(1 - e^{\beta/2})^{-1}
\end{align*}
Letting $k$ tend to infinity,  we then have
\[
\lVert \pi_{0}(a) - a \rVert_{\infty} \, \leq \, 2\cdot l^{1/2} + C' (1 - e^{\beta/2})^{-1},
\]
for each $a \in \mathcal{A}_{{D_{\mu_{\phi}}}}$.  Hence, it remains is to show that there exists constants $C' > 0$ and $\beta < 0$ which satisfy Equation (\ref{Cauchy_Norm_Bound}).  To this end, recall that $h_{\phi}$ denotes the unique eigenfunction of $\mathcal{L}_{\phi}$ given by Theorem \ref{PFR-E-M-Uniq-thm}(4). Then $\widetilde{\phi}:=\phi -P(\phi, \sigma)+\log{h_{\phi}}-\log h_{\phi}\circ\sigma \in C(\Sigma_{\mathpzc{A}}; \mathbb{C})$ defines the strictly negative normalized  H\"older continuous potential function with zero pressure cohomologous to $\phi$.  By Theorem \ref{PFR-E-M-Uniq-thm}(5) the measures $\mu_{\phi}$ and $\mu_{\widetilde{\phi}}$ are equal.    Since $\mu_{\phi}$ satisfies the Gibbs property as stated in Equation (\ref{GibbPropertyMeasureDefn}), for some $c > 1$, we have
\begin{align*}
\mu_{\phi}([\omega_{1}, \omega_{2}, \dots, \omega_{k}]) &\leq c^{2} e^{S_{k}\widetilde{\phi}(\omega) - S_{k+1}\widetilde{\phi}(\omega)} \mu_{\phi}([\omega_{1}, \omega_{2}, \dots, \omega_{k + 1}])\\
&\leq c^{2} e^{-\gamma} \mu_{\phi}([\omega_{1}, \omega_{2}, \dots, \omega_{k + 1}])
\end{align*}
for each $\omega \mathrel{:=} (\omega_{1}, \omega_{2}, \dots ) \in \Sigma_{\mathpzc{A}}$ and $k \in \mathbb{N}$, where $\gamma \mathrel{:=}  \inf \{ \widetilde{\phi}(u) : u \in \Sigma_{\mathpzc{A}} \}$.  Set $\beta = \sup \{ \widetilde{\phi}(u)/2 : u \in \Sigma_{\mathpzc{A}} \}$ and observe that since $\widetilde{\phi}$ is continuous and strictly negative and since $\Sigma_{\mathpzc{A}}$ is compact, $\beta$ is finite and strictly less than zero.  Further, by the Gibbs property of $\mu_{\phi}$, we have $c_{k} \, \mathrel{:=} \, \sup\left\{ (\mu_{\phi}([x])) \,: \, x \in \Sigma_{\mathpzc{A}}^{k} \right\}\leq c e^{2 k\beta}$, for each $k \in \mathbb{N}$.  Hence, for each $k, m \in \mathbb{N}$ and $a \in \mathcal{A}_{D_{\mu_{\phi}}}$, we have 
\begin{align*}
&\lVert \pi_{k}(a) - \pi_{k + m}(a) \rVert_{\infty} \\
&= \; \sup_{\omega \in \Sigma_{\mathpzc{A}}} \left\lvert \sum_{n = k}^{k + m -1} \sum_{x \in \Sigma_{\mathpzc{A}}^{n}} \sum_{j = 1}^{\alpha(x) - 1} \int_{\Sigma_{\mathpzc{A}}} a \, e_{x, j} \, d\mu_{\phi} \, e_{x, j}(\omega) \right\rvert \\
&\leq \; \sup_{\omega \mathrel{:=} (\omega_{1}, \omega_{2}, \dots) \in \Sigma_{\mathpzc{A}}} \sum_{n = k}^{k + m -1} \sum_{x \in \Sigma_{\mathpzc{A}}^{n}} \sum_{j = 1}^{\alpha(x) - 1} \lvert \langle a, e_{(\omega_{1}, \omega_{2}, \dots, \omega_{n}), j} \rangle \rvert \lvert e_{(\omega_{1}, \omega_{2}, \dots, \omega_{n}), j}(\omega) \rvert \\
&\leq \; \sup_{\omega \mathrel{:=} (\omega_{1}, \omega_{2}, \dots) \in \Sigma_{\mathpzc{A}}} \sum_{n = k}^{k + m -1} { \sum_{j = 1}^{\alpha(\omega_{1}, \omega_{2}, \dots, \omega_{n}) - 1}} \lvert \langle a, e_{(\omega_{1}, \omega_{2}, \dots, \omega_{n}), j} \rangle \rvert \lvert e_{(\omega_{1}, \omega_{2}, \dots, \omega_{n}), j}(\omega) \rvert\\
&\leq \; c e^{\gamma/2} \sup_{  (\omega_{1}, \omega_{2}, \dots) \in \Sigma_{\mathpzc{A}}} \sum_{n = k}^{k + m -1} { \sum_{j = 1}^{\alpha(\omega_{1}, \omega_{2}, \dots, \omega_{n}) - 1}}\frac{\lvert \langle a, e_{(\omega_{1}, \omega_{2}, \dots, \omega_{n}), j} \rangle\rvert}{\mu_{\phi}([(\omega_{1}, \omega_{2}, \dots, \omega_{n}])^{1/2}} \\
&\leq \; c e^{\gamma/2} \sup_{ (\omega_{1}, \omega_{2}, \dots) \in \Sigma_{\mathpzc{A}}} \sum_{n = k}^{k + m -1} c_{n}^{1/2} { \sum_{j = 1}^{\alpha(\omega_{1}, \omega_{2}, \dots, \omega_{n}) - 1}} \frac{\lvert \langle a, e_{(\omega_{1}, \omega_{2}, \dots, \omega_{n}), j} \rangle\rvert}{\mu_{\phi}([(\omega_{1}, \omega_{2}, \dots, \omega_{n}])} \\
&\leq \; c e^{\gamma/2} \sup_{ (\omega_{1}, \omega_{2}, \dots) \in \Sigma_{\mathpzc{A}}} \sum_{n = k}^{k + m -1} e^{n \beta}{ \sum_{j = 1}^{\alpha(\omega_{1}, \omega_{2}, \dots, \omega_{n}) - 1}} \frac{\lvert \langle a, e_{(\omega_{1}, \omega_{2}, \dots, \omega_{n}), j} \rangle\rvert}{\mu_{\phi}([(\omega_{1}, \omega_{2}, \dots, \omega_{n}])}.
\end{align*} 
Recall that  $\overline{[D_{\mu_{\phi}}, \pi(a)]} \in B(L^{2}(\Sigma_{\mathpzc{A}}, \mathcal{B}, \mu_{\phi}))$ denotes the continuous extension of the densely defined operator $[D_{\mu_{\phi}}, \pi(a)]$.  Now since $D_{\mu_{\phi}} \chi_{\Sigma_{\mathpzc{A}}} = 0$, 
we have
\begin{align*}
1 \geq& \; \lVert \overline{[D_{\mu_{\phi}}, \pi(a)]} \rVert^{2}\\
\geq& \; {\lVert P_{k + 1}\overline{[D_{\mu_{\phi}}, \pi(a)]} \rVert^{2}}\\
{=}& \; \lVert P_{k + 1}[D_{\mu_{\phi}}, \pi(a)] \rVert^{2}\\
\geq& \; \lVert P_{k + 1}[D_{\mu_{\phi}}, \pi(a)] \chi_{\Sigma_{\mathpzc{A}}}\rVert_{2}^{2} \\
\geq& \; \lVert P_{k + 1} D_{\mu_{\phi}} \pi(a)\chi_{\Sigma_{\mathpzc{A}}} - P_{k+1}\pi(a)D_{\mu_{\phi}} \chi_{\Sigma_{\mathpzc{A}}} \rVert_{2}^{2}\\
\geq& \; \lVert P_{k + 1} D_{\mu_{\phi}} \pi(a)\chi_{\Sigma_{\mathpzc{A}}} \rVert_{2}^{2}\\
=& \; \sum_{x \in \Sigma_{\mathpzc{A}}^{k}} \sum_{j = 1}^{\alpha(x) - 1} \frac{1}{\mu_{\phi}([x])^{2}} \lvert \langle a, e_{x, j} \rangle\rvert^{2}\\
\geq& \; { \sum_{j = 1}^{\alpha(\omega_{1}, \omega_{2}, \dots, \omega_{n}) - 1}} \frac{1}{\mu_{\phi}([(\omega_{1}, \omega_{2}, \dots, \omega_{k})])^{2}} \lvert \langle a, e_{(\omega_{1}, \omega_{2}, \dots, \omega_{k}), j} \rangle\rvert^{2}\\
\geq& \; \frac{1}{l} \left( \sum_{j = 1}^{\alpha(\omega_{1}, \omega_{2}, \dots, \omega_{n}) - 1} \frac{1}{\mu_{\phi}([(\omega_{1}, \omega_{2}, \dots, \omega_{k})])} \lvert \langle a, e_{(\omega_{1}, \omega_{2}, \dots, \omega_{k}), j} \rangle\rvert \right)^{2},
\end{align*}
for each $a \in \mathcal{A}_{D_{\mu_{\phi}}}$ $k \in \mathbb{N}$ and $\omega \mathrel{:=} (\omega_{1}, \omega_{2}, \dots) \in \Sigma_{\mathpzc{A}}$.  Therefore, by setting $C' = l^{1/2}ce^{\gamma/2}$, we have that
\begin{align*}
\lVert \pi_{k}(a) - \pi_{k + m}(a) \rVert_{\infty} \leq C' \sum_{n = k}^{k + m -1} e^{n \beta}.
\end{align*}

To see that the topology induced by $d$ on $\mathcal{S}_{A}$ coincides with the weak-$*$-topology  we employ the  second part of Theorem \ref{RieffelandPavlovicThmNCG} and show that the  image of $\mathcal{A}_{{D_{\mu_{\phi}}}}$ under the quotient map $A \to A /  \{ z \chi_{{\Sigma_{\mathpzc{A}}}} \, : \, z \in \mathbb{C} \}$ is totally bounded. In fact,  this is an immediate consequence of  the following two  properties, which are easily deduced from the
above observations.
\begin{enumerate}
\item For each $\epsilon > 0$, there exists $k \in \mathbb{N}$ such that for any $m \in \mathbb{N}_{0}$ and for each $a \in \mathcal{A}_{{D_{\mu_{\phi}}}}$, we have that $\lVert a - \pi_{k + m}(a) \rVert_{\infty} \, < \, \epsilon$.
\item For each $k \in \mathbb{N}$, the space $\pi_{k}(A)$ is finite dimensional, and so, the closed ball of radius $2\cdot l^{1/2} + C' (1 - e^{\beta/2})^{-1}$ in $\pi_{k}(A)$ is norm compact.
\end{enumerate}
\end{proof}

\subsection{The metric dimension and noncommutative integral}

In the following theorem we consider the metric dimension and measure theoretical aspects of the spectral triple $(C(\Sigma_{\mathpzc{A}}; \mathbb{C}), L^{2}(\Sigma_{\mathpzc{A}}, \mathcal{B}, \mu_{\phi}), D_{\mu_{\phi}})$, as given in Theorem \ref{Subshiftoffinitetypethmspectraltripleandother}.

\begin{thm}\label{Subshiftoffinitetypethmspectraltripleandotherintundmetricdim}
Let $\phi \in C(\Sigma_{\mathpzc{A}}; \mathbb{R})$ denote a H\"older continuous potential function and let $\nu_{\phi}$ denote the unique equilibrium measure for the potential $\phi$.  Then the spectral triple
\[
(A, H, D) \, \mathrel{:=} \,(C(\Sigma_{\mathpzc{A}}; \mathbb{C}), L^{2}(\Sigma_{\mathpzc{A}}, \mathcal{B}, \nu_{\phi}), D_{\nu_{\phi}})
\]
is $(1, +)$-summable.  Moreover, for each $a \in C(\Sigma_{\mathpzc{A}}; \mathbb{C})$, we have that
\begin{equation}\label{intsubshiftfinitetypenon}
\Nint \pi(a) \lvert D_{\nu_{\phi}} \rvert^{-1} \, = \,\frac{1}{h_{\nu_{\phi}}(\sigma)} \, \int_{\Sigma_{\mathpzc{A}}} a \, d \nu_{\phi}
\end{equation}
and hence $(A, H, D)$ has metric dimension equal to one.
\end{thm}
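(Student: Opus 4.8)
The plan is to read the spectrum of $D_{\nu_{\phi}}$ directly off the Haar basis of Theorem \ref{Lemma-Generlised-Haar-Basis} and then to convert the resulting singular-value asymptotics into Dixmier traces by means of the renewal estimates of Corollary \ref{Cor1} and Corollary \ref{Cor2}. Since the Haar basis diagonalises $D_{\nu_{\phi}}$, each $e_{y,k}$ is an eigenvector with eigenvalue $(\alpha(y)-1)/\nu_{\phi}([y])$, the orthocomplement of $\chi_{\Sigma_{\mathpzc{A}}}$ inside the span of $\{\chi_{[x]}:x\in\Sigma\}$ carries the eigenvalue $1$ with multiplicity $l-1$, and $\chi_{\Sigma_{\mathpzc{A}}}$ spans $\ker(D_{\nu_{\phi}})$. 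Hence, on $\ker(D_{\nu_{\phi}})^{\perp}$, the operator $\lvert D_{\nu_{\phi}}\rvert^{-1}$ has singular values $s_{y}\mathrel{:=}\nu_{\phi}([y])/(\alpha(y)-1)$ of multiplicity $\alpha(y)-1$ (for $y$ with $\alpha(y)\geq 2$) together with finitely many values equal to $1$. The one structural fact I would record at the outset is the sandwich $\{y:\nu_{\phi}([y])>t(l-1)\}\subseteq\{y:s_{y}>t\}\subseteq\{y:\nu_{\phi}([y])>t\}$, valid because $1\leq\alpha(y)-1\leq l-1$; this lets one compare the counting function $N(t)\mathrel{:=}\#\{k:\sigma_{k}(\lvert D_{\nu_{\phi}}\rvert^{-1})>t\}$ with $\Upsilon_{\emptyset}$ and the threshold sum $\sum_{s_{y}>t}\nu_{\phi}([y])$ with $\Xi_{\emptyset}$.

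First I would settle summability and the metric dimension. The sandwich together with Corollary \ref{Cor1} gives $N(t)\asymp t^{-1}$, so $\sigma_{k}(\lvert D_{\nu_{\phi}}\rvert^{-1})\asymp 1/k$; applying Lemma \ref{lemma-pplus=pimpliestheta} with $x_{k}$ the increasing enumeration of the eigenvalues of $(1+D_{\nu_{\phi}}^{2})^{1/2}$ then yields metric dimension $\delta=1$ and $(1,+)$-summability simultaneously. For the volume constant I would use that $N(t)\asymp t^{-1}$ forces $\ln(1/t)=\ln N+O(1)$ whenever $N=N(t)$, so the partial sums $\sum_{k=1}^{N}\sigma_{k}(\lvert D_{\nu_{\phi}}\rvert^{-1})$ agree up to $O(1)$ with the threshold sum $\sum_{s_{y}>t}\nu_{\phi}([y])$, which the sandwich pins between $\Xi_{\emptyset}(t(l-1))$ and $\Xi_{\emptyset}(t)$. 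By Corollary \ref{Cor2}, $\Xi_{\emptyset}(t)\sim h_{\nu_{\phi}}(\sigma)^{-1}\ln(1/t)$, and since $\ln(l-1)$ is negligible against $\ln(1/t)$ the two bounds have the same asymptotics, giving $\Nint\lvert D_{\nu_{\phi}}\rvert^{-1}=h_{\nu_{\phi}}(\sigma)^{-1}$.

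The heart of the theorem is the formula for general $a$, and here the Haar basis pays off cleanly. I would first prove it for $a$ locally constant on level-$m$ cylinders, writing $a_{x}$ for the constant value of $a$ on $[x]$. For such $a$, multiplication by $a$ is \emph{exactly} diagonal on the span of $\{e_{y,k}: y \text{ has length} \geq m\}$, acting as the scalar $a_{x}$ with $x$ the length-$m$ prefix of $y$, because $e_{y,k}$ is supported on $[y]$ and $a$ is constant there; the only non-diagonal behaviour of $\pi(a)$ is confined to the finite-dimensional span of $\{\chi_{[x]}:x\in\Sigma\}\cup\{e_{y,k}: y \text{ has length} <m\}$. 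Consequently $\pi(a)\lvert D_{\nu_{\phi}}\rvert^{-1}$ differs from the diagonal operator $\Delta$ with entries $a_{x}s_{y}$ only by a finite-rank operator, whence $\Dix(\pi(a)\lvert D_{\nu_{\phi}}\rvert^{-1})=\Dix(\Delta)$. Splitting $\Delta=\sum_{x\in\Sigma_{\mathpzc{A}}^{m}}a_{x}\Delta_{x}$ along the mutually orthogonal subspaces indexed by the length-$m$ prefix, linearity of the Dixmier trace reduces the computation to $\Dix(\Delta_{x})$, which is exactly the previous volume computation run on cylinders inside $[x]$: Corollary \ref{Cor2} now supplies $\Dix(\Delta_{x})=\nu_{\phi}([x])/h_{\nu_{\phi}}(\sigma)$. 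Summing yields $\Dix(\pi(a)\lvert D_{\nu_{\phi}}\rvert^{-1})=h_{\nu_{\phi}}(\sigma)^{-1}\sum_{x}a_{x}\nu_{\phi}([x])=h_{\nu_{\phi}}(\sigma)^{-1}\int a\,d\nu_{\phi}$, a value independent of $\mathpzc{W}$, so $\pi(a)\lvert D_{\nu_{\phi}}\rvert^{-1}$ is measurable.

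Finally I would pass to arbitrary continuous $a$ by density. Both $a\mapsto\Dix(\pi(a)\lvert D_{\nu_{\phi}}\rvert^{-1})$ (for fixed $\mathpzc{W}$) and $a\mapsto h_{\nu_{\phi}}(\sigma)^{-1}\int a\,d\nu_{\phi}$ are bounded linear functionals on $C(\Sigma_{\mathpzc{A}};\mathbb{C})$, the former because $\lvert\Dix(\pi(a)T)\rvert\leq\lVert a\rVert_{\infty}\,\Dix(T)$ for the positive operator $T=\lvert D_{\nu_{\phi}}\rvert^{-1}$; as they agree on the dense subalgebra of locally constant functions they agree everywhere, and the common value being a uniform limit of $\mathpzc{W}$-independent numbers secures measurability for every continuous $a$. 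I expect the main obstacle to be the bookkeeping in the threshold-to-$N$ conversion and the verification that the exceptional finite-dimensional corrections genuinely vanish under the Dixmier trace; the renewal asymptotics themselves are already delivered by Corollaries \ref{Cor1} and \ref{Cor2}, so the remaining work lies entirely in marrying the counting functions $\Upsilon_{\emptyset}$, $\Xi_{\emptyset}$ and $\Xi_{x}$ to the ordered singular-value sequence of $\lvert D_{\nu_{\phi}}\rvert^{-1}$.
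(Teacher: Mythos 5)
Your proposal is correct, and its computational core coincides with the paper's: both arguments reduce everything to the renewal asymptotics of Corollaries \ref{Cor1} and \ref{Cor2} by sandwiching the ordered singular values attached to a cylinder $[x]$ between the counting functions $\Upsilon_{x}$ and $\Xi_{x}$ (the paper does this for the operators $\pi(\chi_{[x]})\lvert D_{\nu_{\phi}}\rvert^{-1}$ directly, handling the threshold-to-$N$ conversion you flag as the main bookkeeping with a geometric grid $r^{\eta_{N}}$), and this yields $(1,+)$-summability, metric dimension one, and the value $\nu_{\phi}([x])/h_{\nu_{\phi}}(\sigma)$ on cylinders in the same way in both texts. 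Where you genuinely diverge is the passage from cylinder functions to general continuous $a$. You exploit the explicit diagonalisation of $D_{\nu_{\phi}}$ in the Haar basis: for $a$ locally constant at level $m$ the operator $\pi(a)\lvert D_{\nu_{\phi}}\rvert^{-1}$ is a finite-rank perturbation of a diagonal operator, so linearity of the Dixmier trace and its vanishing on finite-rank operators give the formula on the dense subalgebra of locally constant functions, and the bound $\lvert\Dix(\pi(a)T)\rvert\leq\lVert a\rVert_{\infty}\Dix(T)$ finishes the job by density. The paper instead notes that $a\mapsto\Dix(\pi(a)\lvert D_{\nu_{\phi}}\rvert^{-1})$ is a bounded linear functional, hence by the Riesz Representation Theorem is integration against some finite Borel measure, and identifies that measure with $h_{\nu_{\phi}}(\sigma)^{-1}\nu_{\phi}$ via the Hahn--Kolmogorov extension theorem applied to the semiring of cylinder sets. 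The two routes are of comparable length; yours makes the role of the Haar basis more transparent and avoids having to verify $\sigma$-additivity of the set function $[x]\mapsto\Dix(\pi(\chi_{[x]})\lvert D_{\nu_{\phi}}\rvert^{-1})$ on the semiring, at the cost of the (routine) finite-rank-perturbation and positivity estimates you already identify as the remaining work.
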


\begin{proof}
For each $x \in \Sigma_{\mathpzc{A}}^{*} \cup \{\emptyset\}$, let $\Upsilon_{x}, \Xi_{x}: (0, \infty) \to [0, \infty)$ denote the functions that are respectively defined in Equations (\ref{summandrewalLandF}) and (\ref{summandrewalLandF1}).  Let $r \in (0, 1)$ be fixed and let $\mathrm{card}(\Sigma) \mathrel{=:} M \in \mathbb{N}$.  For each $k \in \mathbb{N}$ and $a \in A$, recall that $\sigma_{k}(\pi(a)\lvert D_{\nu_{\phi}} \rvert^{-1})$ denotes the $k$-th largest singular value of the operator $\pi(a)\lvert D_{\nu_{\phi}} \rvert^{-1} \in B(\operatorname{ker}(D_{\nu_{\phi}})^{\perp})$.  Further, since $D_{\nu_{\phi}}$ has a compact resolvent, $\sigma_{k}(\pi(a) \lvert D_{\nu_{\phi}} \rvert^{-1})$ converges to zero as $k$ tends to infinity.  Therefore, fixing $x \in \Sigma_{\mathpzc{A}}^{*} \cup \{\emptyset\}$, for each $k \in \mathbb{N}$, there exists $\eta_{{k}} \in \mathbb{N}_{0}$ such that
\[
r^{\eta_{{k}}} \; \leq \; \sigma_{k}(\sigma_{k}(\chi_{{[x]}}) \lvert D_{\nu_{\phi}} \rvert^{-1}) \; < \;r^{\eta_{{k}} - 1}
\]
and such that $\eta_{{k}}$ tends to infinity as $k$ tends to infinity.  Hence, there exists a positive constant $c$ such that for each sufficiently large $N \in \mathbb{N}$, we have that
\begin{equation}\label{Equation-1}
\Xi_{x}( M r^{\eta_{{N}} - 1} ) \, \leq \, \sum_{k = 1}^{N} \sigma_{k}(\pi(\chi_{{[x]}}) \lvert D_{\nu_{\phi}} \rvert^{-1}) \, \leq \, c + \Xi_{x} (r^{\eta_{{N}}})
\end{equation}
and
\begin{equation}\label{Equation-2}
\ln(\Upsilon_{x}( M r^{\eta_{{N}} - 1})) \, \leq \, \ln(N) \, \leq \, \ln(c + M\Upsilon_{x}(r^{\eta_{{N}}})).
\end{equation}
The pressure of the potential function $\phi - P(\phi, \sigma)$ is equal to zero and by Theorem \ref{PFR-E-M-Uniq-thm}, we have that the unique equilibrium measure $\nu_{\phi - P(\phi, \sigma)}$ for the potential function $\phi - P(\phi, \sigma)$ is equal to $\nu_{\phi}$.  Using this, the inequalities given in Equations (\ref{Equation-1}) and (\ref{Equation-2}) and by the results of Corollary \ref{Cor1} and Corollary \ref{Cor2}, we have that
\begin{eqnarray*}
\liminf_{N \to \infty} \frac{\sum_{k = 1}^{N} \sigma_{k}(\pi(\chi_{{[x]}}) \lvert D_{\nu_{\phi}} \rvert^{-1})}{\ln(N)} &\leq& \liminf_{N \to \infty} \frac{c + \Xi_{x} (r^{\eta_{{N}}})}{\ln(\Upsilon_{x}(M r^{\eta_{{N}} - 1}))}\\
&=& \liminf_{N \to \infty} \frac{\ln(r^{\eta_{{N}}}) \, \nu_{\phi}([x])}{\ln(M^{-1} r^{-\eta_{{N}} + 1}) h_{\nu_{\phi}}(\sigma)} \; = \; \frac{\nu_{\phi}([x])}{h_{\nu_{\phi}}(\sigma)}.
\end{eqnarray*}
By a similar argument, one can deduce that
\[
\limsup_{N \to \infty} \frac{\sum_{k = 1}^{N} \sigma_{k}(\pi(\chi_{{[x]}}) \lvert D_{\nu_{\phi}} \rvert^{-1})}{\ln(N)} \, \geq \, \frac{\nu_{\phi}([x])}{h_{\nu_{\phi}}(\sigma)}.
\]
Therefore, for each $x \in \Sigma_{\mathpzc{A}}^{*} \cup \{ \emptyset \}$ we have that the Dixmier trace of the operator $\pi(\chi_{{[x]}}) \lvert D_{\nu_{\phi}} \rvert^{-1}$ is independent of the state $\mathpzc{W}$.  Namely, we have that
\[
\Nint \pi(\chi_{{[x]}}) \lvert D_{\nu_{\phi}} \rvert^{-1} \, = \, \frac{\nu_{\phi}([x])}{h_{\nu_{\phi}}(\sigma)}.
\]
Subsequently, by Definition \ref{metricdimdefnst}, it follows that the metric dimension of $(A, H, D)$ is equal to $1$.  Now, for each state $\mathpzc{W}$ (satisfying the conditions of \cite[Theorem 1.5]{Rennie2}), the operator defined, for each $a \in A$, by
\[
a \mapsto \text{Tr}_{\mathpzc{W}}(\pi(a) \lvert D_{\nu_{\phi}} \rvert^{-1}),
\]
is a bounded linear functional on $A$.  Hence, by the Riesz Representation Theorem there exists a finite Borel measure $\nu$ such that, for each $a \in A$, we have that
\[
\text{Tr}_{\mathpzc{W}}(\pi(a) \lvert D_{\nu_{\phi}} \rvert^{-1}) \, = \, \int_{\Sigma_{\mathpzc{A}}} a \, d\nu.  
\]
Further, the set $R \mathrel{:=} \{ [x] \, : \, x \in \Sigma_{\mathpzc{A}}^{*}\} \cup \{ \emptyset, \Sigma_{\mathpzc{A}} \}$ forms a semi-ring on which the set function $\Lambda: R \to [0, \infty)$ given, for each $I \in R$, by
\[
\Lambda(I) \, \mathrel{:=} \, \text{Tr}_{\mathpzc{W}}(\pi(\chi_{{I}}) \lvert D_{\nu_{\phi}} \rvert^{-1}),
\]
is an additive $\sigma$-additive set function.  Therefore, since $\Lambda$ is also $\sigma$-finite, by the Hahn-Kolmogorov Theorem, for an arbitrary state $\mathpzc{W}$ (satisfying the conditions of \cite[Theorem 1.5]{Rennie2}) and for each $a \in A$, we have that
\[
\text{Tr}_{\mathpzc{W}}(\pi(a) \lvert D_{\nu_{\phi}} \rvert^{-1}) \, = \, \frac{1}{h_{\nu_{\phi}}(\sigma)} \int_{\Sigma_{\mathpzc{A}}} a \, d \nu_{\phi}.
\]
Namely, for each $a \in A$, we have that
\[
\Nint \pi(a) \lvert D_{\nu_{\phi}} \rvert^{-1} \, = \, \frac{1}{h_{\nu_{\phi}}(\sigma)} \, \int_{\Sigma_{\mathpzc{A}}} a \; d \nu_{\phi}.
\]
\end{proof}

\begin{rmk}
In \cite{Bellisard}, Bellissard and Pearson presents an alternative spectral triple to that considered here, which represents the full shift space $(\Sigma^{\infty}, \sigma)$ on two symbols equipped with an ultra-metric.  An example of such an ultra metric is given, for $\omega, \upsilon \in \Sigma^{\infty}$ by
\[
d_{\nu_{\phi}}(\omega, \upsilon) \; \mathrel{:=} \; \inf \{ \nu_{\phi}([x]) \; : \; x \in \Sigma^{*} \cup \{ \emptyset \} \; \text{and} \; \omega, \upsilon \in [x] \},
\]
where $\nu_{\phi}$ is an equilibrium measure for a H\"older continuous potential function $\phi \in C(\Sigma^{\infty}; \mathbb{R})$.  For such a metric, our results give that the noncommutative volume constant of Bellissard and Pearson's spectral triple is equal to $2/h_{\nu_{\phi}}(\sigma)$.   Another recent construction of a spectral triple, which is also interesting in this context, is presented by Sharp in \cite{Sharp-R}.
\end{rmk}

\section*{Acknowledgement}

The authors are grateful to the referee for carefully reading the original article and for many valuable suggestions.  The second author was mainly supported by grants, EP/P50273X/1 and EP/PHDPLUS/AMC3/DTG2010 and in part by ARC - Noncommutative Fractal Geometry: New Invariants.

\begin{small}

\end{small}

\begin{thebibliography}{99}

\bibitem{STCS}
C.~Antonescu-Ivan, E.~Christensen, Spectral triples for {AF $C^*$-algebras} and
  metrics on the {C}antor set, Journal of Operator Theory 56~(1) (2004) 17--46.

\bibitem{expfast-finte-spec1}
C.~Antonescu-Ivan, E.~Christensen, Sums of two dimensional spectral triples,
  Mathematica Scandinavica 100 (2007) 35--60.

\bibitem{DOST}
C.~Antonescu-Ivan, E.~Christensen, M.~L. Lapidus, Dirac operators and spectral
  triples for some fractal sets built on curves, Advances in Mathematics
  217~(1) (2008) 42--78.


\bibitem{Bellisard2}
J.~Bellissard, M.~Marcolli, K.~Reihani, Dynamical systems on spectral metric
  spaces, Preprint: \url{http://www.its.caltech.edu/~matilde/DynSysSp3.pdf},
  2010.

\bibitem{Bellisard}
J.~Bellissard, J.~Pearson, Non-commutative riemannian geometry and diffusion on
  ultrametric cantor sets, Journal of Noncommutative Geometry 3 (2009)
  447--480.

\bibitem{Bowen2}
R.~Bowen, Some systems with unique equilibrium states, Mathematical Systems
  Theory 8 (1974) 193--202.

\bibitem{Bowen1}
R.~Bowen, Equilibrium states and the ergodic theory of {A}nosov
  diffeomorphisms, Lecture Notes in Mathematics, Vol. 470, Springer-Verlag,
  Berlin, 1975.

\bibitem{Rennie1}
A.~L.~Carey, A.~Rennie, A.~Sedaev, F.~Sukochev, The Dixmier trace and asymptotics of zeta functions,
  Journal of Functional Analysis 249 (2007) 253--283.

\bibitem{Rennie2}
A.~L.~Carey, J.~Phillips, F.~Sukochev, Spectral flow and Dixmier traces,
  Advances in Mathematics 173 (2003) 68-113.

\bibitem{NCDGCones}
A.~Connes, Noncommutative differential geometry, Inst. Hautes {\'E}tudes Sci.
  Publ. Math.~(62) (1985) 257--360.

\bibitem{CO2}
A.~Connes, Compact metric spaces, {F}redholm modules, and hyperfiniteness,
  Ergodic Theory and Dynamical Systems 9 (1987) 207--220.

\bibitem{C}
A.~Connes, Noncommutative Geometry, National Academy Press, 1994.

\bibitem{Dixmier}
J.~Dixmier, Existence de traces non normales, Les Comptes Rendus de
  l'Acad\'{e}mie des sciences 262A (1966) 1107--1108.


\bibitem{F2}
K.~Falconer, Techniques in Fractal Geometry, John Wiley and Sons, 1997.

\bibitem{Self-reference-1}
K.~Falconer, T.~Samuel, Dixmier traces and coarse multifractal analysis,
  Ergodic Theory and Dynamical Systems, Cambridge Online Journals doi:
  10.1017/S0143385709001102 (2010) 1--13.

\bibitem{Gelfand+Neumark}
I.~M. Gelfand, M.~A. Na\u{\i}mark, On the imbedding of normed rings into the
  ring of operators in hilbert space, Math-Net.Ru - Mathematicheskii Sbornik 12
  (1943) 197--213.

\bibitem{GI2}
D.~Guido, T.~Isola, Dimensions and spectral triples for fractals in
  $\mathbb{R}^{N}$, Advances in Operator Algebras and Mathematical Physics:
  Proceedings of the Conference held in Sinaia, Romania (2003).

\bibitem{GI1}
D.~Guido, T.~Isola, Dimensions and singular traces for spectral triples, with
  applications to fractals, Journal of Functional Analysis 203 (2003) 362--400.

\bibitem{Krajewski2}
B.~Iochum, T.~Krajewski, P.~Martinetti, Distance in finite spaces from
  noncommutative geometry, Journal of Geometry and Physics 37 (2001) 100--125.


\bibitem{homologyatinfinity}
M.~Ke{ss}eb\"ohmer, B.~O. Stratmann, Homology at infinity; fractal geometry of
  limiting symbols for modular subgroups, Topology 46 (2007) 469--491.

\bibitem{KSS}
M.~Ke{ss}eb{\"o}hmer, B.~O. Stratmann, M.~Stadlbauer, {Radon-Nikodym
  representations of Cuntz-Krieger algebras and Lyapunov spectra for KMS
  states}, Mathematische Zeitschrift 256~(4) (2006) 871--893.

\bibitem{Kolmogorov1958}
A.~N. Kolmogorov, A new metric invariant of transient dynamical systems and
  automorphisms in {L}ebesgue spaces, Dokl. Akad. Nauk SSSR (N.S.) 119 (1958)
  861--864.

\bibitem{Krajewski}
T.~Krajewski, Classification of finite spectral triples, Journal of Geometry
  and Physics 28 (1998) 1--30.

\bibitem{Krengel}
U.~Krengel, R.~R\"otter and U.~Wacker, A renewal type mean ergodic theorem, Zeitschrift f\"ur Wahrscheinlichkeitstheorie und verwandte Gebiete 64 (1983) 269--274.

\bibitem{Lalley}
S.~P. Lalley, Renewal theorems in symbolic dynamics, with applications to
  geodesic flow, non-euclidean tessellations and their fractal limits, Acta
  Mathematica 163 (1989) 1--55.

\bibitem{Lapidus3}
M.~L. Lapidus, Towards a noncommutative fractal geometry? {L}aplacians and
  volume measures on fractals, in: Harmonic analysis and nonlinear differential
  equations ({R}iverside, {CA}, 1995), vol. 208 of Contemp. Math., Amer. Math.
  Soc., Providence, RI, 1997, pp. 211--252.

\bibitem{Mar}
M.~Marcolli, Arithmetic noncommutative geometry, vol.~36 of University Lecture
  Series, American Mathematical Society, Providence, RI, 2005, with a foreword
  by Yuri Manin.

\bibitem{Discret-spectral-triples_and_their_symmetries}
M.~Paschke, A.~Sitarz, Discrete spectral triples and their symmetries, J. Math.
  Phys. 39~(11) (1998) 6191--6205.

\bibitem{Parry}
W.~Parry, Intrinsic {M}arkov chains, {T}ransactions of the {A}merican {M}athematical {S}ociety, 112~(1) (1964),
  55--66.

\bibitem{Pav}
B.~Pavlovi\'c, Defining metric spaces via operators from unital
  {$C^{*}$}-algebras, Pacific Journal of Mathematics 186 (1998) 285--313.

\bibitem{Conformal_Erg}
F.~Przytycki, M.~Urbanski, Conformal Fractals: {E}rgodic Theory Methods, Cambridge University Press, 2010.

\bibitem{simonreed}
M.~Reed, B.~Simon, Methods of Modern Mathematical Physics Vol. 1: Functional
  Analysis, Academic Press, 1981.

\bibitem{Ri2}
M.~A. Rieffel, Metrics on states from actions of compact groups, Documenta
  Mathematica 3 (1998) 215--229.

\bibitem{Ri3}
M.~A. Rieffel, Compact quantum metric spaces, Contemporary Mathematics 365
  (2004) 315--330.

\bibitem{Ruelle1}
D.~Ruelle, Thermodynamic Formalism: Encyclopedia of Mathematics and its
  Applications, vol.~5, Addisan-Wesley Publicating Company, 1978.

\bibitem{segal}
I.~E. Segal, A noncommutative extension of abstract integration, Annals of
  Mathematics 57 (1953) 401--457.

\bibitem{Series}
C.~Series, The infinite word problem and limit sets in fuchisian groups,
  Ergodic Theory and Dynamical Systems (1981) 337--360.

\bibitem{Sharp-R}
R.~{Sharp}, {Spectral triples and Gibbs measures for expanding maps on Cantor
  sets}, Journal of Noncommutative Geometry (2011).

\bibitem{sinaiBook}
Y.~G. Sinai, Gibbs measure in ergodic theory, Russian Mathematical Surveys
  27~(4) (1972) 487--494.

\bibitem{IntroNC}
J.~V{\'a}rilly, An Introduction to Noncommutative Geometry, EMS - Lectures in
  Mathematics, 2006.

\bibitem{ET}
P.~Walters, An Introduction to Ergodic Theory, Springer, 2000.

\end{thebibliography}
\end{document}